\title{On the Representation theory of the Infinite Temperley-Lieb algebra}
\author{Stephen T. Moore \\
	\multicolumn{1}{p{.8\textwidth}}{\centering\emph{Dept. of Mathematics, Ben Gurion University, Beer-Sheva, Israel,\\ stm862@gmail.com}}}
\newtheorem{thm}{Theorem}[section]
\newtheorem{prop}{Proposition}[section]
\newtheorem{defin}{Definition}[section]
\newtheorem{conj}{Conjecture}[section]
\newtheorem{corr}{Corollary}[section]
\newtheorem{lemma}{Lemma}[section]
\newtheorem{remark}{Remark}[section]
\begin{document}
	\maketitle
\begin{abstract}
We begin the study of the representation theory of the infinite Temperley-Lieb algebra. We fully classify its finite dimensional representations, then introduce infinite link state representations and classify when they are irreducible or indecomposable. We also define a construction of projective indecomposable representations for $TL_{n}$ that generalizes to give extensions of $TL_{\infty}$ representations. Finally we define a generalization of the spin chain representation and conjecture a generalization of Schur-Weyl duality.
\end{abstract}

\section{Introduction}
The Temperley-Lieb algebras \cite{TL} are a family of finite dimensional algebras that first appeared in relation to statistical mechanics models, before being rediscovered by Jones in the standard invariant of subfactors \cite{Jones1}, who used them to define the Jones polynomial knot invariant \cite{Jones2}. They were further shown to be the Schur-Weyl dual of the quantum group $U_{q}(\mathfrak{sl}_{2})$ by Jimbo and Martin \cite{Jimbo, MartinSW}. The representation theory of the Temperley-Lieb algebra was first studied by Martin \cite{MartinBook} who used combinatorial techniques to classify irreducibles and constructed projective representations as ideals generated by projections. Shortly after, Goodman and Wenzl \cite{GW} studied path idempotents on Bratelli diagrams to classify the blocks of the algebra. A more recent study was done using cellular algebra techniques by Ridout and Saint-Aubin \cite{RSA} based on earlier work by Westbury \cite{Westbury}. A follow-up paper by Bellet\^{e}te, Ridout, and Saint-Aubin \cite{BRSA} fully classified the indecomposable representations of the Temperley-Lieb algebra.

Whilst current research has focused on the finite dimensional Temperley-Lieb algebras, their infinite dimensional generalization also has potential applications. It has been conjectured based on physical considerations that in the $n\rightarrow\infty$ limit, the representation theory of the Temperley-Lieb algebra should approach that of the Virasoro algebra. An attempt to realize this based on an inductive limit of $Rep TL_{n}$ was given in \cite{GS}, however we shall see that there are large families of irreducible $TL_{\infty}$ representations not appearing in this construction. The infinite Temperley-Lieb algebra has also appeared in the study of the periplectic Lie superalgebra \cite{EntovaETAL} where a categorical action was defined.

The paper is organized as follows: In Section \ref{Background} we review the representation theory of the finite Temperley-Lieb algebras. In Section \ref{TL infty} we begin the study of the infinite Temperley-Lieb algebra, and fully classify its finite dimensional representations in Theorem \ref{fin dim reps}. We then introduce \textit{infinite link state representations}, which generalize standard $TL_{n}$ representations and can be considered analogues of Verma modules, and classify when these are irreducible and isomorphic in Theorems \ref{Irreducibility of representations} and \ref{Irr iso}, as well as showing that they have at most two irreducible subquotients. Next in Definition \ref{Extensions} we define a construction of projective indecomposable representations for $TL_{n}$ that generalizes to give extensions of certain $TL_{\infty}$ representations. Finally in Section \ref{spin chain} We relate a bilinear form defined based on cellular algebras to one defined on the spin chain representation. We also define a generalization of the spin chain representation which gives a functor $\mathcal{F}:Rep U_{q}(\mathfrak{sl}_{2})\rightarrow Rep TL_{\infty}$. We conjecture this functor defines an equivalence of categories between the category of finite dimensional $U_{q}(\mathfrak{sl}_{2})$ representations and certain subcategories of $Rep TL_{\infty}$.

\section{The Representation theory of $TL_{n}$.}\label{Background}
The Temperley-Lieb algebra $TL_{n}(\delta)$ is a finite dimensional algebra with generators $\{1,e_{1},...,e_{n-1}\}$ and relations:
\begin{align*}
e_{i}^{2}&=\delta e_{i}\\ 
e_{i}e_{i\pm 1}e_{i}&=e_{i}\\ 
e_{i}e_{j}&=e_{j}e_{i}, \:\:\: \lvert i-j\rvert>1
\end{align*}
with parameter $\delta:=q+q^{-1}$, $q\in\mathbb{C}\setminus \{0\}$. This algebra has a natural diagrammatic description, due to Kauffman \cite{Kauffman}, where basis elements of the algebra consist of rectangles with $n$ marked points along the top and bottom, and non-intersecting strings connecting these points. Multiplication is given by vertical concatenation of diagrams, and removing a closed loop corresponds to multiplying by $\delta$. In this description, the identity element is a diagram with $n$ vertical strings, and the generator $e_{i}$ is given by a cup connecting the $i$th and $(i+1)$th points along the top, a cap connecting the same points along the bottom, and vertical strings everywhere else. We will sometimes need to use certain polynomials in $q$, known as \textit{quantum integers}:
\begin{defin}
The quantum integer $[n]$ is given by $[n]:=\frac{q^{n}-q^{-n}}{q-q^{-1}}$. Alternatively, $[n]=\sum\limits_{i=0}^{n}q^{n-2i-1}$, or $[n]=\delta[n-1]-[n-2]$, with $[1]=1$ and $[2]=\delta$. We denote $[n]!:=1\times[2]\times...\times[n]$.
\end{defin} 
\begin{figure}[H]
	\centering
	\includegraphics[width=0.6\linewidth]{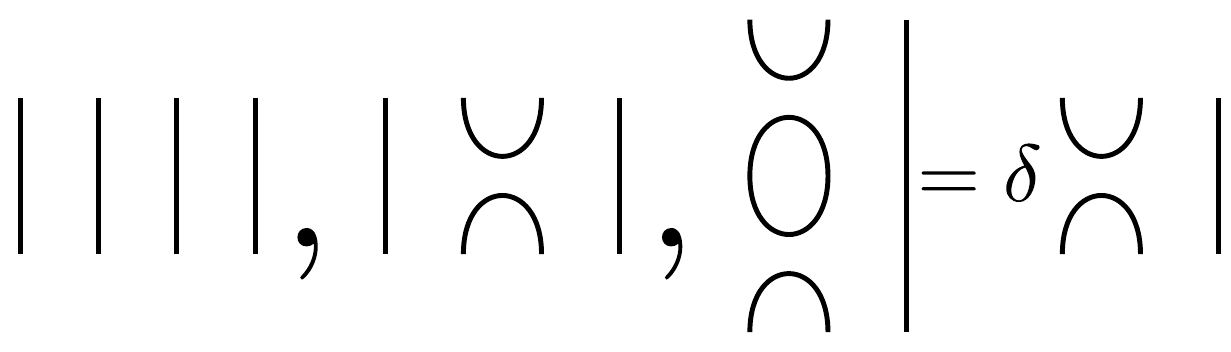}
	\caption{The elements $1,e_{2}\in TL_{4}$, and the relation $e_{1}^{2}=\delta e_{1}$.}
\end{figure}
The representation theory of the Temperley-Lieb algebras was first developed by Martin \cite{MartinBook} using combinatorial techniques, and Goodman and Wenzl \cite{GW} based on path idempotents for Bratelli diagrams. However for our purposes we follow the more recent version by Ridout and Saint-Aubin \cite{RSA} based on cellular algebra methods.\\

\begin{defin} A (n-p)-link state, with $0\leq p\leq\lfloor\frac{n}{2}\rfloor$, is a line with $n$ marked points on it, labelled left to right from $1$ to $n$, with non-intersecting strings attached to each point, such that $p$ strings connect two points together, and the other $n-2p$ strings are only connected to a single point.
\end{defin}
We refer to the strings connected to two points as \textit{cups}, and the strings connected to a single points as a \textit{string}. A cup connecting the points $i$ and $i+1$ is referred to as a \textit{simple cup} at the $i$th point.
\begin{figure}[H]
	\centering
	\includegraphics[width=0.3\linewidth]{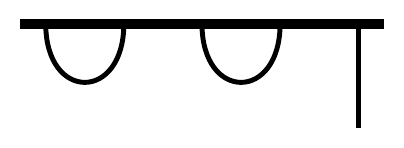}
	\caption{A $(5,2)$ link state.}
\end{figure}
The number of $(n,p)$-link states is given by $d_{n,p}:=\left(\begin{array}{c}n\\ p\end{array}\right)-\left(\begin{array}{c}n\\ p-1\end{array}\right)$.\\
\begin{defin}
The $TL_{n}(\delta)$ standard representation $\mathcal{V}_{n,p}$ is the representation with basis given by the set of $(n,p)$-link states, with $TL_{n}$ acting by concatenation, where the action of a $TL_{n}$ generator is zero if it increases the number of cups of the link state.
\end{defin}
There is a symmetric invariant bilinear form $\langle \cdot,\cdot \rangle_{n,p}$ on $\mathcal{V}_{n,p}$ defined as follows. Reflect the left entry along its line then join on top of the right entry. It is zero if every string on one side isn't connected to a string on the other. Otherwise it gives $\delta^{k}$, where $k$ is the number of closed loops. For example, the bilinear form on $\mathcal{V}_{4,1}$ is as follows:
\begin{figure}[H]
	\centering
	\includegraphics[width=0.4\linewidth]{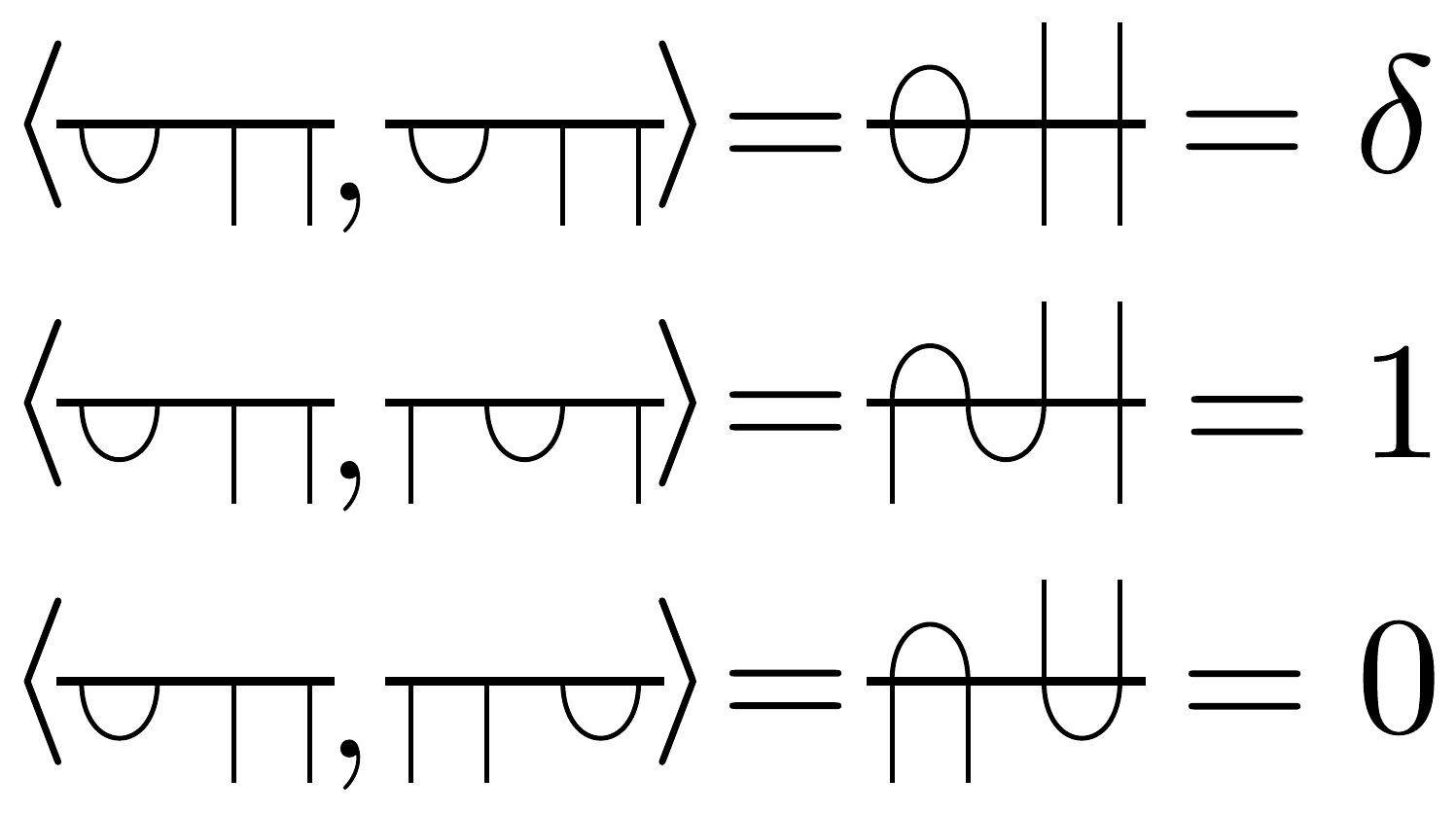}
\end{figure}
Using this bilinear form, we can define two further families of $TL_{n}$ representations:
\begin{defin}
The kernel of the bilinear form, $\mathcal{R}_{n,p}:=\{x\in\mathcal{V}_{n,p}: \langle x,y\rangle=0\:\:\forall y\in\mathcal{V}_{n,p}\}$ is a subrepresentation of $\mathcal{V}_{n,p}$, called the \textit{radical}. We denote the quotient representation	by $\mathcal{L}_{n,p}:=\mathcal{V}_{n,p}/\mathcal{R}_{n,p}$.
\end{defin}
Given $(n,p)$-link states $x,y$, we can form the $TL_{n}$ element $\lVert x y\rVert$ by reflecting $y$ and joining it to the bottom of $x$. This leads to the following lemma:
\begin{lemma}\label{lemma 1}
For $x,y,z\in\mathcal{V}_{n,p}$, $\lVert x y\rVert z=\langle y,z\rangle x$.
\end{lemma}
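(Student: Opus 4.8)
The plan is to prove the identity by a direct diagrammatic computation, the point being that the $TL_{n}$ element $\lVert xy\rVert$ behaves like a ``rank one'' operator built from $x$ and $y$. First I would write out the diagram $\lVert xy\rVert$ explicitly: its top row carries the cup-and-string pattern of $x$, its bottom row carries the reflected pattern of $y$, and its $n-2p$ top-to-bottom strings join the strings of $x$ to those of the reflection $\bar y$ in the unique planar (left-to-right) way. One checks that this really is a planar $TL_{n}$ diagram for every $x,y\in\mathcal{V}_{n,p}$, using the standard convention (the one making the count $d_{n,p}$ correct) that in a link state no string is nested beneath a cup. To evaluate $\lVert xy\rVert\,z$ one then stacks $z$ beneath $\lVert xy\rVert$ and observes that the lower portion of the resulting picture, namely the reflection of $y$ joined on top of $z$, is exactly the diagram whose value is $\langle y,z\rangle_{n,p}$.

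Now I would split on whether $\langle y,z\rangle$ vanishes. If $\langle y,z\rangle=\delta^{k}\neq 0$, then by definition of the form the cups of $\bar y$ together with the cups of $z$ close up into $k$ closed loops and $n-2p$ arcs, each arc joining the point carrying a string of $\bar y$ to the point carrying a string of $z$. Continuing these arcs up through the body of $\lVert xy\rVert$, every string of $x$ gets connected to exactly one dangling string of $z$, the $p$ cups of $x$ along the top row are left untouched, and no new cup is created on the top; what survives on the top is therefore the link state $x$, weighted by the factor $\delta^{k}$ from the loops. A small check is that these are the only loops: the cups of $x$ are self-contained at the top and the top-to-bottom strings of $\lVert xy\rVert$ form a matching, so neither contributes a cycle, and the loop count of $\lVert xy\rVert\,z$ equals the one recorded by $\langle y,z\rangle$. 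Hence $\lVert xy\rVert\,z=\delta^{k}x=\langle y,z\rangle x$.

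If instead $\langle y,z\rangle=0$, then by definition some string of $\bar y$ is joined, through the cups of $\bar y$ and of $z$, to another string of $\bar y$: a short parity count shows that the number of arcs joining two strings of $\bar y$ equals the number joining two strings of $z$, so the form vanishes exactly when these counts are positive, and in particular such a $\bar y$-to-$\bar y$ arc must occur. Pushing it up through the body of $\lVert xy\rVert$ produces a cup joining two strings of $x$, so the configuration on the top row has more than $p$ cups and the action returns $0$, again in agreement with $\langle y,z\rangle x=0$.

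Conceptually the argument is easy; the real work is only in fixing the diagrammatic conventions carefully enough that the three claims above --- that the bottom of $\lVert xy\rVert\,z$ computes $\langle y,z\rangle$, that an extra cup appears on the top row exactly when the form vanishes, and that no spurious loops appear --- can each be read off directly. I expect the main obstacle to be purely this bookkeeping, and that a single figure of the stacked diagram $\lVert xy\rVert\,z$ makes all three points transparent, after which the proof takes only a sentence or two.
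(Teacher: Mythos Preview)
Your proposal is correct: the diagrammatic argument you sketch is exactly the standard one, and each of the three checks you isolate (that the $\bar y$--$z$ junction computes $\langle y,z\rangle$, that a $\bar y$-to-$\bar y$ arc forces an extra cup on the $x$ side, and that no loops arise beyond those counted by the form) goes through as you describe. The parity observation that the number of $\bar y$-to-$\bar y$ arcs equals the number of $z$-to-$z$ arcs is a nice way to pin down why the vanishing case always produces an extra cup on top.

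For comparison, the paper gives no proof of this lemma at all: it is stated as an immediate consequence of the diagrammatic definitions of $\lVert xy\rVert$ and $\langle\cdot,\cdot\rangle$, and the surrounding text only records its interpretation (that nonradical elements can be mapped to one another by $TL_n$). So there is nothing to compare against beyond the observation that your write-up simply unpacks what the paper leaves implicit; a single picture of the stacked diagram would indeed suffice, and you could safely compress your final version to one or two sentences as you anticipate.
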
	
This can be viewed as saying that if $x,z\notin\mathcal{R}_{n,p}$, then there are $TL_{n}$ elements mapping $x$ to $z$ and $z$ to $x$. This lemma is essential for the proof of the following proposition:
\begin{prop}
If $\langle\cdot,\cdot\rangle_{n,p}$ is not identically zero, then $\mathcal{V}_{n,p}$ is cyclic and indecomposable, and $\mathcal{L}_{n,p}$ is irreducible.
\end{prop}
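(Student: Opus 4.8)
The plan is to deduce all three claims from a single observation that is essentially a repackaging of Lemma \ref{lemma 1}: \emph{any vector $w\in\mathcal{V}_{n,p}$ with $w\notin\mathcal{R}_{n,p}$ generates the whole module}, i.e. $TL_{n}\cdot w=\mathcal{V}_{n,p}$. To see this, expand $w=\sum_{i}a_{i}x_{i}$ in the link-state basis. Since $w\notin\mathcal{R}_{n,p}$ there is a link state $z$ with $\langle w,z\rangle\neq 0$, hence $\langle z,w\rangle\neq 0$ by symmetry of the form. For an arbitrary link state $x$ form the algebra element $\lVert xz\rVert\in TL_{n}$; applying Lemma \ref{lemma 1} to each basis vector and using bilinearity gives $\lVert xz\rVert\,w=\sum_{i}a_{i}\langle z,x_{i}\rangle\,x=\langle z,w\rangle\,x$. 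As $\langle z,w\rangle\neq 0$, this exhibits every link state $x$ as an element of $TL_{n}\cdot w$, so $TL_{n}\cdot w=\mathcal{V}_{n,p}$. Cyclicity of $\mathcal{V}_{n,p}$ is then immediate, since the hypothesis that $\langle\cdot,\cdot\rangle_{n,p}$ is not identically zero means $\mathcal{R}_{n,p}\subsetneq\mathcal{V}_{n,p}$, so such a generator $w$ exists.

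Next I would show $\mathcal{R}_{n,p}$ is the unique maximal proper submodule. If $N\subsetneq\mathcal{V}_{n,p}$ is any proper submodule, then $N\subseteq\mathcal{R}_{n,p}$: otherwise $N$ contains some vector outside the radical and hence, by the observation above, all of $\mathcal{V}_{n,p}$. Since $\mathcal{R}_{n,p}$ is itself proper, it is the unique maximal submodule. Indecomposability follows: were $\mathcal{V}_{n,p}=U\oplus V$ with $U,V\neq 0$, both summands would be proper, hence both contained in $\mathcal{R}_{n,p}$, forcing $\mathcal{V}_{n,p}=U+V\subseteq\mathcal{R}_{n,p}$, a contradiction. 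Finally, submodules of $\mathcal{L}_{n,p}=\mathcal{V}_{n,p}/\mathcal{R}_{n,p}$ correspond to submodules of $\mathcal{V}_{n,p}$ containing $\mathcal{R}_{n,p}$; since $\mathcal{R}_{n,p}$ is maximal and $\mathcal{L}_{n,p}\neq 0$, there are no proper nonzero submodules, so $\mathcal{L}_{n,p}$ is irreducible.

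I do not anticipate a real obstacle, as this is the standard cellular-algebra argument; the one point needing care is upgrading Lemma \ref{lemma 1}, stated for link states, to arbitrary vectors $w$. This requires expanding $w$ in the basis and combining bilinearity with the symmetry $\langle z,w\rangle=\langle w,z\rangle$ to be sure the scalar $\langle z,w\rangle$ coming out is nonzero; after that, everything reduces to formal module theory about a module with a unique maximal submodule.
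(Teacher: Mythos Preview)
Your proof is correct and is precisely the standard cellular-algebra argument the paper has in mind: the paper does not actually supply a proof here (this is a background result quoted from \cite{RSA}), but it explicitly flags Lemma~\ref{lemma 1} as ``essential for the proof,'' and your argument uses that lemma in exactly the intended way. There is nothing to add.
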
	
The only case in which the bilinear form is identically zero is when $\delta=0$ and $n=2p$, so in this case $\mathcal{R}_{2p,p}=\mathcal{V}_{2p,p}$. In general, the radical is described by the following:
\begin{prop}
The radical $\mathcal{R}_{n,p}$ is irreducible.
\end{prop}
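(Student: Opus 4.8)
The plan is to reduce everything to the preceding Proposition. The only case needing work is $\mathcal{R}_{n,p}\neq 0$, and here I would show $\mathcal{R}_{n,p}\cong\mathcal{L}_{n,p'}$ for a suitable $p'<p$. Since $p'<p\leq\lfloor n/2\rfloor$ forces $p'\neq n/2$, the form $\langle\cdot,\cdot\rangle_{n,p'}$ is not identically zero, so $\mathcal{L}_{n,p'}$ is irreducible by the preceding Proposition and we are done; the degenerate situation $\mathcal{R}_{2p,p}=\mathcal{V}_{2p,p}$ at $\delta=0$ is subsumed, with $p'=p-1$ and the isomorphism below onto all of $\mathcal{V}_{2p,p}$. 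The index $p'$ is the largest value below $p$ for which $\mathcal{V}_{n,p'}$ and $\mathcal{V}_{n,p}$ are \emph{linked}, a condition read off from the zeros of the Gram determinant of $\langle\cdot,\cdot\rangle_{n,p}$ (a product of quantum integers $[k]$, as computed in \cite{Westbury, RSA}) and amounting to a congruence relating $p$, $p'$ and the multiplicative order of $q^{2}$.

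First I would construct a nonzero $TL_{n}$-module homomorphism $\phi\colon\mathcal{V}_{n,p'}\to\mathcal{V}_{n,p}$. On an $(n,p')$-link state $v$, which carries $n-2p'$ through-strings, $\phi(v)$ is defined as a fixed linear combination of the $(n,p)$-link states obtained by attaching $p-p'$ further cups along consecutive through-strings of $v$, with coefficients built from ratios of quantum integers and chosen (a Jones--Wenzl-type prescription) so that: (i) $\phi$ commutes with the action of each generator $e_{i}$, checked case by case according to whether $e_{i}$ meets the newly attached cups or not; and (ii) $\langle\phi(v),w\rangle_{n,p}=0$ for every $(n,p)$-link state $w$. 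Property (ii), which uses a vanishing $[k]=0$ coming from the linkage congruence, gives $\mathrm{im}\,\phi\subseteq\mathcal{R}_{n,p}$, and $\phi\neq0$ is seen by evaluating on one suitably chosen link state.

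It then remains to upgrade ``$\subseteq$'' to an isomorphism onto a simple module. For $\mathrm{im}\,\phi=\mathcal{R}_{n,p}$ I would pass to $\mathcal{V}_{n,p}/\mathrm{im}\,\phi$, on which $\langle\cdot,\cdot\rangle_{n,p}$ descends, and use the Gram determinant once more to see this induced form is nondegenerate, whence $\mathcal{R}_{n,p}\subseteq\mathrm{im}\,\phi$. Finally $\mathrm{im}\,\phi\cong\mathcal{V}_{n,p'}/\ker\phi$ is a nonzero quotient of $\mathcal{V}_{n,p'}$; since Lemma \ref{lemma 1} (applied with the nonzero form $\langle\cdot,\cdot\rangle_{n,p'}$) shows every vector outside $\mathcal{R}_{n,p'}$ generates $\mathcal{V}_{n,p'}$, the only proper submodule containing no such vector is $\mathcal{R}_{n,p'}$, so $\ker\phi\in\{0,\mathcal{R}_{n,p'}\}$; running an induction on the finite chain of standard modules in the block of $\mathcal{V}_{n,p}$ — base case: its minimal member has nondegenerate form, hence is simple — forces $\ker\phi=\mathcal{R}_{n,p'}$, giving $\mathcal{R}_{n,p}\cong\mathcal{L}_{n,p'}$, and the same induction shows every $\mathcal{V}_{n,p}$ has composition length at most two. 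The hard part is the construction of $\phi$: pinning down $p'$ and the correct quantum-integer coefficients and verifying (ii); once $\phi$ is in hand the rest is bookkeeping with the preceding Proposition and Lemma \ref{lemma 1}.
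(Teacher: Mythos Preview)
The paper does not give its own proof of this proposition: it sits in Section~\ref{Background}, which is a review of known results from \cite{RSA,Westbury,MartinBook}, and is simply stated. Your outline is exactly the standard argument from that literature, and in fact the paper later records its key output (after Definition~\ref{map def}) without proof: for the symmetric pair $(n,p),(n,p')$ one has $\mathrm{Ker}(\phi_{n,p})=\mathcal{R}_{n,p}$, $\mathrm{Im}(\phi_{n,p})=\mathcal{R}_{n,p'}$, hence $\mathcal{L}_{n,p}\simeq\mathcal{R}_{n,p'}$ --- which is your conclusion with the roles of $p$ and $p'$ swapped. So there is nothing to compare against beyond noting that your strategy agrees with the machinery the paper cites.

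One small remark on your sketch: the step ``use the Gram determinant once more to see the induced form on $\mathcal{V}_{n,p}/\mathrm{im}\,\phi$ is nondegenerate'' is the part that needs the most care. The Gram determinant of $\langle\cdot,\cdot\rangle_{n,p}$ vanishing does not by itself compute the rank, so one typically closes this either by a dimension count (using the recursive formulae for $\dim\mathcal{L}_{n,p}$ in the block) or, as in \cite{RSA}, by combining the cellular filtration with the induction you already set up to pin down the composition factors of $\mathcal{V}_{n,p}$. Either route works and is standard; just be aware that ``Gram determinant'' alone is shorthand for one of these.
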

Hence the standard representations are either irreducible or indecomposable	with two irreducible components. Determining when the radical is non-trivial is done by studying the Gram matrix of the bilinear form. Namely, take the matrix whose entries are given by $\langle x,y\rangle$ for link states $x,y\in\mathcal{V}_{n,p}$. Then when the determinant of this matrix is zero, the radical is non-trivial. The result of this can be summarized as follows:
\begin{thm}
Let $l\in\mathbb{N}$ be minimal such that $q^{2l}=1$.
\begin{itemize}
\item If no such $l$ exists, $l=1$, or $l=2$ and $n$ is odd,  then $\mathcal{R}_{n,p}=0$.
\item If $n-2p+1=kl$ for some $k\in\mathbb{N}$, $k\neq 0$, then $\mathcal{R}_{n,p}=0$.
\item If $l=2$ (i.e. $\delta=0$) and $n=2p$, then $\mathcal{R}_{2p,p}=\mathcal{V}_{2p,p}$.
\item If $0\leq n-2p+1<l$ then $\mathcal{R}_{n,p}=0$.
\item Otherwise $\mathcal{R}_{n,p}\neq 0,\mathcal{V}_{n,p}$.	
\end{itemize}
\end{thm}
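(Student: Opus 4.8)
The plan is to reduce the entire statement to the explicit factorisation of the Gram determinant of $\langle\cdot,\cdot\rangle_{n,p}$ and then read off its zero set. Two structural reductions come first. Since $\mathcal{R}_{n,p}$ is by definition the kernel of the form, and a symmetric bilinear form on a finite dimensional space is degenerate precisely when its Gram matrix is singular, $\mathcal{R}_{n,p}\neq 0$ if and only if $\det G_{n,p}=0$. Moreover, as already observed, the form is identically zero only for $\delta=0$ and $n=2p$, so $\mathcal{R}_{n,p}=\mathcal{V}_{n,p}$ exactly in that case (the third bullet) and $\mathcal{R}_{n,p}\neq\mathcal{V}_{n,p}$ in all other situations. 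Thus everything reduces to deciding, from $q$, $n$ and $p$, whether the polynomial (in $\delta$) $\det G_{n,p}$ vanishes.

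For the determinant I would invoke the classical product formula (see \cite{RSA, Westbury}), which in the present normalisation reads
\[
\det G_{n,p}=\prod_{k=1}^{p}\left(\frac{[n-2p+k+1]}{[k]}\right)^{d_{n,p-k}},
\]
with each exponent $d_{n,p-k}=\dim\mathcal{V}_{n,p-k}$ a positive integer; it can also be derived by induction on $p$ from Lemma \ref{lemma 1}, but I would simply quote it. The only facts about quantum integers needed are that $[m]$ is a squarefree polynomial in $\delta$, and that $[m]=0$ at $\delta=q+q^{-1}$ exactly when $q^{2m}=1$ while $q^{2}\neq 1$, i.e. exactly when $l$ exists, $l\geq 2$, and $l\mid m$. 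In particular, if no $l$ exists or $l=1$, then no quantum integer occurring in the formula vanishes, so $\det G_{n,p}$ is a ratio of nonzero scalars and $\mathcal{R}_{n,p}=0$: this gives the first two clauses of the first bullet.

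Assume now $l\geq 2$. Passing to the order of vanishing at the given $\delta_{0}=q+q^{-1}$ (which is $1$ for $[m]$ when $l\mid m$ and $0$ otherwise), the formula yields
\[
\operatorname{ord}_{\delta_{0}}\det G_{n,p}=\sum_{\substack{1\leq k\leq p\\ l\mid n-2p+k+1}} d_{n,p-k}\;-\sum_{\substack{1\leq k\leq p\\ l\mid k}} d_{n,p-k},
\]
which is $\geq 0$ since $\det G_{n,p}\in\mathbb{Z}[\delta]$, and $\mathcal{R}_{n,p}\neq 0$ iff it is $>0$. Both index sets are arithmetic progressions with common difference $l$. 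When $l\mid n-2p+1$ one has $n-2p+k+1\equiv k\pmod l$, so the two progressions coincide: the vanishing numerators are matched term by term with vanishing denominators, the order is $0$, and $\mathcal{R}_{n,p}=0$ (the second bullet). The same matching, using that for $\delta=0$ it is exactly the even-indexed quantum integers that vanish, disposes of $l=2$ with $n$ odd and completes the first bullet. In the remaining regime the comparison of the two sums is governed by whether the numerator progression reaches indices lying strictly beyond those of the denominator progression — equivalently, by whether the defect number $n-2p$ has a reflected partner still admissible among the $\mathcal{V}_{n,\bullet}$; in the regime $0\leq n-2p+1<l$ (the fourth bullet) the order is again $0$ and $\mathcal{R}_{n,p}=0$, and otherwise it is positive, so $\mathcal{R}_{n,p}\neq 0$, and by the first paragraph $\mathcal{R}_{n,p}\neq\mathcal{V}_{n,p}$ as well, which is the last bullet.

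The one genuinely delicate point is this last comparison of the two sums of the $d_{n,j}=\binom{n}{j}-\binom{n}{j-1}$ along residue classes modulo $l$: one must show the difference is non-negative in general and pin down exactly when it is strictly positive, and then verify that this coincides with the stated arithmetic conditions on $n-2p+1$ modulo $l$ (and, in the boundary cases, with whether the reflected defect number exceeds $n$). Everything else — the two reductions, the product formula for $\det G_{n,p}$, and the elementary facts about quantum integers — is routine.
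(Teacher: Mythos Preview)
The paper does not prove this theorem; it is quoted in the background section as a summary of known results (Martin, Goodman--Wenzl, Westbury, Ridout--Saint-Aubin). So there is no ``paper's own proof'' to compare against, and your approach via the product formula for $\det G_{n,p}$ is exactly the standard route taken in those references.

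That said, there is a genuine gap in your argument, and it is tied to the fact that the fourth bullet is actually \emph{false as stated}. Take $l=3$, $n=4$, $p=2$: here $n-2p+1=1$, so the fourth bullet would force $\mathcal{R}_{4,2}=0$. But the Gram matrix of $\mathcal{V}_{4,2}$ has determinant $\delta^{2}(\delta^{2}-1)=[2]^{2}[3]$, which vanishes at $\delta=\pm 1$; equivalently, your own order formula gives
\[
\sum_{\substack{1\le k\le 2\\ 3\mid k+1}} d_{4,2-k}\;-\;\sum_{\substack{1\le k\le 2\\ 3\mid k}} d_{4,2-k}\;=\;d_{4,0}-0\;=\;1>0.
\]
Indeed $(4,0),(4,2)$ is a symmetric pair for $l=3$ (the paper itself lists $(8,2),(8,4)$, which has the same defect), so $\mathcal{R}_{4,2}\simeq\mathcal{L}_{4,0}\neq 0$. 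Thus your sentence ``in the regime $0\le n-2p+1<l$ \ldots\ the order is again $0$'' is simply not true, and no amount of rearranging the two arithmetic progressions will make it true.

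What is correct is the criterion you gesture at but do not pin down: for non-critical $(n,p)$ one has $\mathcal{R}_{n,p}=0$ precisely when there is no $p'<p$ with $(n,p'),(n,p)$ symmetric. Writing $n-2p+1=k l+r$ with $1\le r\le l-1$, the nearest reflected partner is $p'=p-(l-r)$, and it exists iff $p\ge l-r$; the vanishing locus of $\det G_{n,p}$ matches this, not the bare inequality $n-2p+1<l$. Your order-of-vanishing set-up is the right tool to see this, but you stopped short of the actual comparison and, in doing so, asserted a conclusion that contradicts your own formula. The delicate step you flag at the end is not optional: carrying it out is where one discovers that the stated fourth bullet needs the extra hypothesis that no reflected partner lies in the admissible range.
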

Generally, the representation theory of $TL_{n}$ can be split into the cases of $q$ being \textit{generic} and $q$ a \textit{root of unity}, where we consider $q=\pm 1$ as being generic. For roots of unity, we refer to $(n,p)$ as being \textit{critical} whenever $n-2p+1=kl$. A straightforward consequence of the above theorem is the following:  	
\begin{corr}	
$TL_{n}$ is semisimple when $q$ is generic or $l=2$ and $n$ is odd. The standard representations $\mathcal{V}_{n,p}$, $0\leq p\leq \lfloor\frac{n}{2}\rfloor$ then form a complete set of irreducibles.		
\end{corr}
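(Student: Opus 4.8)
The plan is to deduce this from the classification of the radicals $\mathcal{R}_{n,p}$ together with a dimension count via the Artin--Wedderburn theorem. First note that the two hypotheses, ``$q$ generic'' and ``$l=2$ with $n$ odd'', are exactly the situations covered by the first bullet of the preceding theorem: $q$ being generic means that either no $l$ with $q^{2l}=1$ exists or the minimal such $l$ equals $1$. Hence in these cases $\mathcal{R}_{n,p}=0$ for every $0\le p\le\lfloor n/2\rfloor$. Moreover the form $\langle\cdot,\cdot\rangle_{n,p}$ is never identically zero here, since its only identically-zero instance requires $\delta=0$ (equivalently $l=2$) together with $n=2p$, forcing $n$ even, which is excluded. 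So by the proposition asserting that a non-vanishing form makes $\mathcal{L}_{n,p}$ irreducible, and since $\mathcal{R}_{n,p}=0$ gives $\mathcal{V}_{n,p}=\mathcal{L}_{n,p}$, each standard module $\mathcal{V}_{n,p}$ is irreducible.

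Next I would verify that $\mathcal{V}_{n,p}\not\cong\mathcal{V}_{n,p'}$ for $p\ne p'$. For this I would use the chain of two-sided ideals $J_q\subseteq TL_n$ spanned by the diagrams having at most $n-2q$ through-strings. On one hand $J_{p+1}\cdot\mathcal{V}_{n,p}=0$, because stacking a diagram with at most $n-2p-2$ through-strings on an $(n,p)$-link state yields a link state with strictly more than $p$ cups, hence $0$ in $\mathcal{V}_{n,p}$. On the other hand $J_p\cdot\mathcal{V}_{n,p}\ne 0$: choosing $(n,p)$-link states $x,y$ with $\langle y,x\rangle\ne 0$ (possible since the form is non-degenerate), Lemma \ref{lemma 1} gives $\lVert x y\rVert\, x=\langle y,x\rangle x\ne 0$ with $\lVert x y\rVert\in J_p$. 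Thus $\max\{q:J_q M\ne 0\}$ is an isomorphism invariant of a $TL_n$-module $M$ which equals $p$ on $\mathcal{V}_{n,p}$, separating the standard modules. (Alternatively this is immediate from the general fact for cellular algebras that distinct cell modules with non-degenerate cell forms are non-isomorphic irreducibles.)

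Finally, count dimensions. Every basis diagram of $TL_n$ decomposes uniquely, by cutting along its through-strings, into a pair consisting of a link state on the top row and one on the bottom row, each with the same number $n-2p$ of defects; since $\dim TL_n$ is the Catalan number $C_n$, this yields $\dim TL_n=\sum_{p=0}^{\lfloor n/2\rfloor}d_{n,p}^2=\sum_p(\dim\mathcal{V}_{n,p})^2$. Because the $\mathcal{V}_{n,p}$ are pairwise non-isomorphic irreducible $TL_n$-modules, Artin--Wedderburn gives $\dim TL_n\ge\dim(TL_n/\operatorname{rad}TL_n)\ge\sum_p(\dim\mathcal{V}_{n,p})^2=\dim TL_n$, so both inequalities are equalities: $\operatorname{rad}TL_n=0$, i.e.\ $TL_n$ is semisimple, and no irreducible other than the $\mathcal{V}_{n,p}$ can occur, so they form a complete set. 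I expect the only mildly delicate point to be the separation argument of the second paragraph (the first and third paragraphs being essentially bookkeeping), and in a write-up I would likely just cite the cellular-algebra version of it.
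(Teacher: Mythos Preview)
Your proposal is correct. The paper does not actually give a proof of this corollary: it appears in the background Section~\ref{Background}, is introduced merely as ``a straightforward consequence of the above theorem'', and ultimately defers to \cite{RSA}. Your argument supplies precisely the details that the paper (and the cellular-algebra treatment it is summarising) would use: the first bullet of the preceding theorem forces $\mathcal{R}_{n,p}=0$, the exclusion of the degenerate case $\delta=0$, $n=2p$ makes each $\mathcal{V}_{n,p}=\mathcal{L}_{n,p}$ irreducible, the ideal filtration $J_q$ (equivalently the standard cellular fact) separates the $\mathcal{V}_{n,p}$, and the Catalan identity $\dim TL_n=\sum_p d_{n,p}^2$ together with Artin--Wedderburn closes the argument. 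This is essentially the standard route and there is nothing to correct; your one ``mildly delicate point'' (pairwise non-isomorphism) is indeed best handled by invoking the cellular-algebra statement, since the later Proposition~\ref{maps} in the paper, which would also give it, logically comes after this corollary.
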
	
The possible labels $(n,p)$ of standard representations can be arranged into a Bratelli diagram. For roots of unity, the critical labels form into critical lines. If $(n,p)$ and $(n,p^{\prime})$ are located the same distance on either side of a given critical line then we refer to $(n,p),(n,p^{\prime})$ as a \textit{symmetric pair}. An example of the Bratelli diagram for $l=3$ is given in Figure \ref{Bratelli diagram}. Examples of symmetric pairs in this case include $(3,0),(3,1)$ and $(8,2),(8,4)$.
\begin{figure}
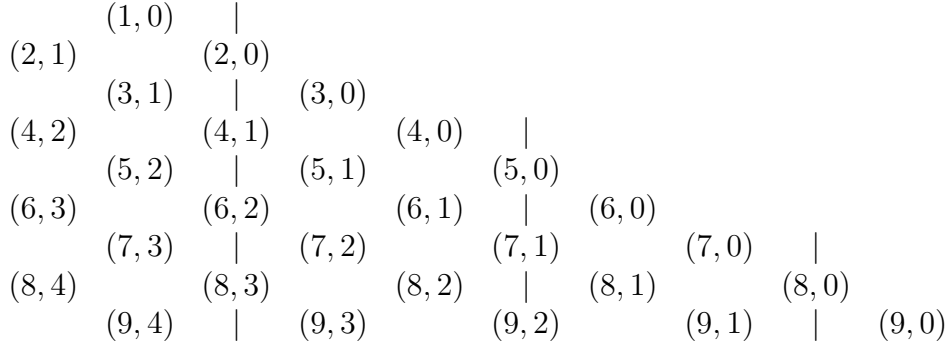
\label{Bratelli diagram}
\begin{align*}
\begin{array}{cccccccccc}&(1,0) & \lvert\\ (2,1)&  & (2,0)\\ & (3,1)& \lvert & (3,0)\\ (4,2)&  & (4,1) &  & (4,0)& \lvert\\ & (5,2) & \lvert & (5,1) &  & (5,0)\\ (6,3) &  & (6,2) &  & (6,1) & \lvert & (6,0)\\ & (7,3) & \lvert & (7,2) &  & (7,1) &  & (7,0) & \lvert\\ (8,4) &  & (8,3) &  & (8,2) & \lvert & (8,1) &  & (8,0)\\
& (9,4) & \lvert & (9,3) & & (9,2) & & (9,1) & \lvert & (9,0)
\end{array}
\end{align*}
\caption{The Bratelli diagram for $l=3$.}
\end{figure}	
We note that the importance of symmetric pairs is given in the following:
\begin{prop}\label{maps}
$End(\mathcal{V}_{n,p})\simeq\mathbb{C}$ for all values of $q$. $Hom(\mathcal{V}_{n,p},\mathcal{V}_{n,p^{\prime}})=0$ unless $q$ is a root of unity, $p^{\prime}>p$, and $(n,p),(n,p^{\prime})$ is a symmetric pair, in which case $Hom(\mathcal{V}_{n,p},\mathcal{V}_{n,p^{\prime}})\simeq\mathbb{C}$. There is an exceptional case for $\delta=0$, with $Hom(\mathcal{V}_{2,1},\mathcal{V}_{2,0})\simeq\mathbb{C}$.
\end{prop}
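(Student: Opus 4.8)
The plan is to translate everything into statements about the bilinear forms $\langle\cdot,\cdot\rangle_{n,p}$ and the radicals $\mathcal{R}_{n,p}$, then use Lemma \ref{lemma 1} as the engine. For $\mathrm{End}(\mathcal{V}_{n,p})\simeq\mathbb{C}$: when the form is nonzero, $\mathcal{V}_{n,p}$ is cyclic (generated by any link state $x\notin\mathcal{R}_{n,p}$) by the proposition quoted above, so an endomorphism $\phi$ is determined by $\phi(x)$; writing $\phi(x)=\sum c_y\, y$ and applying $\lVert x x\rVert$ via Lemma \ref{lemma 1} (so that $\lVert x x\rVert\,\phi(x)=\phi(\lVert x x\rVert x)=\langle x,x\rangle\,\phi(x)$, while also $\lVert x x\rVert y=\langle x,y\rangle x$) forces $\phi(x)$ to be a multiple of $x$ modulo $\mathcal{R}_{n,p}$; a short argument then upgrades this to $\phi(x)=\lambda x$ on the nose using that $\mathcal{R}_{n,p}$ is irreducible and that $\phi$ restricted to the radical must also be scalar, or else that $\mathcal{R}_{n,p}$ does not embed back into $\mathcal{V}_{n,p}/\mathcal{R}_{n,p}$. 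The degenerate case $\delta=0$, $n=2p$ (where $\mathcal{R}_{2p,p}=\mathcal{V}_{2p,p}$) must be handled separately, directly from the combinatorics of link states with the maximal number of cups.

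For the $\mathrm{Hom}(\mathcal{V}_{n,p},\mathcal{V}_{n,p'})$ statement, the key structural input is that each $\mathcal{V}_{n,p}$ has at most two composition factors, $\mathcal{L}_{n,p}$ and (when the radical is nonzero) $\mathcal{L}_{n,p''}$ for the symmetric partner $p''$, with $\mathcal{R}_{n,p}\simeq\mathcal{L}_{n,p''}$ irreducible. A nonzero map $\phi:\mathcal{V}_{n,p}\to\mathcal{V}_{n,p'}$ with $p\neq p'$ cannot be injective-with-irreducible-image in a way that produces an iso (that would contradict $\mathrm{End}\simeq\mathbb{C}$ plus distinctness of the $\mathcal{L}$'s), so $\ker\phi$ and $\mathrm{im}\,\phi$ are proper nonzero, forcing $\ker\phi=\mathcal{R}_{n,p}$ and hence $\mathcal{L}_{n,p}\hookrightarrow\mathcal{V}_{n,p'}$; since $\mathcal{L}_{n,p}$ must then be a submodule of $\mathcal{V}_{n,p'}$, and the only irreducible submodule of $\mathcal{V}_{n,p'}$ is its radical $\mathcal{R}_{n,p'}\simeq\mathcal{L}_{n,p'''}$ where $(n,p')$ and $(n,p''')$ are a symmetric pair, we must have $\mathcal{L}_{n,p}\simeq\mathcal{R}_{n,p'}$. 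This pins down $p$ as the symmetric partner of $p'$ lying on the far side of the critical line, which (since radicals only exist at roots of unity and the radical label is the larger one) gives exactly the stated conditions: $q$ a root of unity, $p<p'$, $(n,p),(n,p')$ a symmetric pair. Conversely, when those conditions hold, the composite $\mathcal{V}_{n,p}\twoheadrightarrow\mathcal{L}_{n,p}\simeq\mathcal{R}_{n,p'}\hookrightarrow\mathcal{V}_{n,p'}$ is a nonzero hom, and uniqueness up to scalar follows because any two such maps agree on the cyclic generator up to scalar by the $\mathrm{End}$ computation applied to $\mathcal{L}_{n,p}$. The $\delta=0$ exceptional case $\mathrm{Hom}(\mathcal{V}_{2,1},\mathcal{V}_{2,0})$ again falls out of the special structure $\mathcal{R}_{2,1}=\mathcal{V}_{2,1}$.

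The main obstacle I expect is the bookkeeping in the converse direction — actually producing the nonzero map and showing it is well-defined, i.e. verifying $\mathcal{L}_{n,p}\simeq\mathcal{R}_{n,p'}$ as $TL_n$-modules when $(n,p),(n,p')$ form a symmetric pair with $p<p'$. This is essentially an explicit combinatorial claim about link states: one needs a concrete $TL_n$-equivariant assignment sending an $(n,p)$-link state into the span of $(n,p')$-link states (adding $p'-p$ extra cups in a prescribed "cabling" pattern associated with the critical line), and then one must check it kills exactly $\mathcal{R}_{n,p}$ and lands in $\mathcal{R}_{n,p'}$. The cleaner route may be to avoid building the map by hand and instead argue abstractly: $\dim\mathrm{Hom}(\mathcal{V}_{n,p},\mathcal{V}_{n,p'}) = \dim\mathrm{Hom}(\mathcal{L}_{n,p},\mathcal{V}_{n,p'})$ equals the multiplicity of $\mathcal{L}_{n,p}$ in the socle of $\mathcal{V}_{n,p'}$, which is $1$ precisely when $\mathcal{L}_{n,p}\simeq\mathcal{R}_{n,p'}$ and $0$ otherwise — reducing the whole statement to identifying the radical's composition factor, which is exactly the content of the theorem on when $\mathcal{R}_{n,p}\neq 0$ together with the symmetric-pair description of the Bratelli diagram.
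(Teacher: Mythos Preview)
The paper does not prove this proposition: it appears in the background Section~\ref{Background}, where results from the literature (primarily \cite{RSA}) are quoted without argument. There is therefore no paper-internal proof to compare against; the paper's ``proof'' is a citation, and the explicit map of Definition~\ref{map def} (from \cite{CGM}) is recorded immediately afterwards precisely to make the nonzero hom concrete.

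Your sketch is the standard structural argument and is essentially sound, with one slip and one genuine dependency. The slip: when $\mathcal{R}_{n,p}=0$ a nonzero $\phi$ is injective, so $\ker\phi$ is not ``proper nonzero'' as you write --- but then $\mathcal{V}_{n,p}=\mathcal{L}_{n,p}$ embeds directly as the socle of $\mathcal{V}_{n,p'}$ and the conclusion is unchanged. The dependency is the one you flag yourself: the converse direction, and your ``cleaner route'' via the socle, both require the identification $\mathcal{R}_{n,p'}\simeq\mathcal{L}_{n,p}$ for a symmetric pair with $p<p'$. In the paper's presentation (and in \cite{RSA}) this isomorphism is \emph{deduced from} the explicit map $\phi_{n,p}$ of Definition~\ref{map def}, not established independently; so your abstract route is circular unless you supply a separate argument (say a dimension count via Proposition~\ref{Recursion}, or a block/central-character argument). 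In practice the explicit construction is the real content of the converse, which is why the paper cites it immediately after stating the proposition.
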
	
For $p,p^{\prime}$ such that $Hom(\mathcal{V}_{n,p},\mathcal{V}_{n,p^{\prime}})\simeq\mathbb{C}$, we denote by $\phi_{n,p}\in Hom(\mathcal{V}_{n,p},\mathcal{V}_{n,p^{\prime}})$ the unique such map up to scalars. An explicit construction of this map was given in \cite{CGM} and is given as follows:
\begin{defin}\label{map def}
For $(n,0),(n,p)$ a symmetric pair, the map $\phi_{n,0}:\mathcal{V}_{n,0}\rightarrow\mathcal{V}_{n,p}$ is given by $\phi_{n,0}(\lvert_{n})=\sum a_{i}y_{i}$ where $y_{i}$ are link states in $\mathcal{V}_{n,p}$, and the sum is over all such link states. For the coefficients $a_{i}$, given a link state, $y_{i}$, each line $h$ in $y_{i}$ splits $y_{i}$ into two regions. Let $n(h)$ be the number of lines (including $h$ itself) in the region of $y_{i}$ not containing the right hand edge of the diagram. Then $a_{i}=(-1)^{e(y_{i})}\frac{[n-p]!}{\prod\limits_{h}[n(h)]}$, where the product is over all lines $h$ in $y_{i}$, and $e(y_{i})$ is the number of even quantum integers in the coefficient.
\end{defin}				
The maps $\phi_{n,p}:\mathcal{V}_{n
	,p}\rightarrow\mathcal{V}_{n,p^{\prime}}$ can be constructed from $\phi_{n-2p,0}$ by joining the image of $\phi_{n-2p,0}$ to the bottom of link states in $\mathcal{V}_{n,p}$. The maps $\phi_{n,p}$ allow us to almost fully describe the representations $\mathcal{L}_{n,p}$, $\mathcal{R}_{n,p}$. Namely, for a symmetric pair $(n,p),(n,p^{\prime})$, $Ker(\phi_{n,p})=\mathcal{R}_{n,p}$, $Im(\phi_{n,p})=\mathcal{R}_{n,p^{\prime}}$, so $\mathcal{L}_{n,p}\simeq\mathcal{R}_{n,p^{\prime}}$. Combined with knowledge of the $TL_{n}$ projective representations, the following can be shown:
\begin{prop}
For $\delta=0$ and $n$ even, the representations $\mathcal{L}_{n,p}$, $0\leq p\leq \frac{n}{2}-1$ form a complete set of irreducibles. Otherwise, the representations $\mathcal{L}_{n,p}$, $0\leq p\leq \lfloor\frac{n}{2}\rfloor$ form a complete set of irreducibles for $TL_{n}$.
\end{prop}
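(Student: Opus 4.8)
The strategy is the standard one for a cellular algebra: exhibit a pairwise non-isomorphic family of simple modules and then show that nothing is missing. Concretely I would check three things — that every $\mathcal{L}_{n,p}$ in the asserted range is a \emph{nonzero} irreducible, that distinct values of $p$ give non-isomorphic modules, and that every simple $TL_{n}$-module is isomorphic to one of them. The first point is immediate from the results quoted above: $\mathcal{L}_{n,p}$ is irreducible whenever $\langle\cdot,\cdot\rangle_{n,p}$ is not identically zero, and the form vanishes identically only for the label $(2p,p)$ with $\delta=0$, where $\mathcal{R}_{2p,p}=\mathcal{V}_{2p,p}$ and hence $\mathcal{L}_{2p,p}=0$. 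This is exactly why, and only why, the top label $p=n/2$ must be dropped when $\delta=0$ and $n$ is even; for every other $(n,p)$ with $0\le p\le\lfloor n/2\rfloor$ the module $\mathcal{L}_{n,p}$ is a nonzero irreducible.

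For the non-isomorphism, I would use the two-sided ideals $J_{k}\subseteq TL_{n}$ spanned by the diagrams with at most $k$ through-strings, which give $TL_{n}$ its cell filtration. If $k<n-2p$ then $J_{k}$ annihilates $\mathcal{V}_{n,p}$, since applying such a diagram to an $(n,p)$-link state forces strings to be capped off, producing a link state with more than $p$ cups, hence $0$; so $J_{k}$ also annihilates the quotient $\mathcal{L}_{n,p}$. Conversely, whenever $\mathcal{L}_{n,p}\neq 0$ the form is nonzero, so there exist link states with $\langle z,x\rangle\neq 0$ and some $y\notin\mathcal{R}_{n,p}$; by Lemma \ref{lemma 1} the element $\lVert y\,z\rVert\in J_{n-2p}$ then acts on the class of $x$ in $\mathcal{L}_{n,p}$ by sending it to the nonzero class of $\langle z,x\rangle\,y$. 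Hence $n-2p$ is the least $k$ for which $J_{k}$ acts nontrivially on $\mathcal{L}_{n,p}$, so it is an isomorphism invariant distinguishing the modules for different $p$. (In the $\delta=0$, $n$ even case this is consistent with $\mathcal{L}_{n,n/2-1}\cong\mathcal{R}_{n,n/2}=\mathcal{V}_{n,n/2}$, the latter being irreducible and thus contributing no new simple.)

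Completeness is where the real work lies. The idea is that every simple $TL_{n}$-module occurs as a composition factor of some standard module: the regular module ${}_{TL_{n}}TL_{n}$ itself carries a filtration whose subquotients are copies of the cell modules $\mathcal{V}_{n,p}$, so any simple, being a quotient of the regular module, must appear in some $\mathcal{V}_{n,p}$; and by the general theory of cellular algebras every composition factor of $\mathcal{V}_{n,p}$ is of the form $\mathcal{L}_{n,p'}$ — which is precisely what the analysis of the Gram matrices and of the maps $\phi_{n,p'}$, with $\mathrm{Ker}(\phi_{n,p'})=\mathcal{R}_{n,p'}$ and $\mathrm{Im}(\phi_{n,p'})=\mathcal{R}_{n,p''}$, encodes. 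Combining the two, every simple is some $\mathcal{L}_{n,p'}$, which together with the first two points gives the asserted complete irredundant list. Equivalently, and as signalled in the text, one can run the count through the projective indecomposables: for each surviving $p$, Martin's construction of ideals generated by projections yields an indecomposable projective $\mathcal{P}_{n,p}$ with head $\mathcal{L}_{n,p}$, and the identity $\dim TL_{n}=\sum_{p}\dim\mathcal{P}_{n,p}\cdot\dim\mathcal{L}_{n,p}$ shows they exhaust the regular module, so no further simple exists. I expect this last step to be the main obstacle: it is not self-contained, relying on the decomposition-matrix and linkage data worked out in \cite{RSA, MartinBook, GW}, and it needs the $\delta=0$ exceptional behaviour — the maps running "the wrong way", as in $\mathrm{Hom}(\mathcal{V}_{2,1},\mathcal{V}_{2,0})\simeq\mathbb{C}$ — to be handled by hand in order to confirm that dropping the label $p=n/2$ loses no irreducible.
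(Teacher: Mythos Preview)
The paper does not actually prove this proposition: it sits in the background Section~\ref{Background} and is quoted from \cite{RSA, MartinBook, GW}, with only the one-line indication ``Combined with knowledge of the $TL_{n}$ projective representations, the following can be shown''. Your proposal is a correct and reasonably complete expansion of that sentence, and in its second completeness argument (the dimension count through the $\mathcal{P}_{n,p}$) it matches exactly what the paper gestures at.

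Where you go beyond the paper is in the first two steps. The paper, following \cite{RSA}, distinguishes the $\mathcal{L}_{n,p}$ implicitly via the maps $\phi_{n,p}$ and the structure of the radicals; your use of the through-string filtration $J_{k}$ together with Lemma~\ref{lemma 1} is a cleaner, more self-contained invariant and is the standard cellular-algebra device. Likewise, your first completeness argument (cell filtration of the regular module, so every simple is a composition factor of some $\mathcal{V}_{n,p}$) is the general cellular mechanism underlying \cite{RSA}; the paper simply cites it. Your handling of the $\delta=0$, $n$ even exception---observing that $\mathcal{L}_{n,n/2-1}\cong\mathcal{R}_{n,n/2}=\mathcal{V}_{n,n/2}$ so that no simple is lost---is exactly right and is what the paper's remark about the exceptional map $\mathrm{Hom}(\mathcal{V}_{2,1},\mathcal{V}_{2,0})\simeq\mathbb{C}$ in Proposition~\ref{maps} is pointing to.
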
		
Denote the dimension of $\mathcal{L}_{n,p}$ by $L_{n,p}$. For given $n,p,l$, we can write $n-2p+1=k(n,p)l+r(n,p)$ uniquely, where $k(n,p),r(n,p)\in\mathbb{N}$, $1\leq r(n,p)\leq l$. For roots of unity we have the following:
\begin{prop}\label{Recursion} The dimensions of the irreducible quotients $\mathcal{L}_{n,p}$ satisfy:
	\begin{align*}
	L_{n,p}=\left\{\begin{array}{cc} d_{n,p}& r(n,p)=l\\  L_{n-1,p}& r(n,p)=l-1\\  L_{n-1,p}+ L_{n-1,p-1}& otherwise\end{array}\right.
	\end{align*}
	with initial conditions $L_{n,0}=1$ and $L_{2p-1,p}=0$.
\end{prop}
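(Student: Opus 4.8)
The plan is to derive a single closed formula for $L_{n,p}$ from the known structure of the standard modules, and then verify each of the three recursion cases as a combinatorial identity satisfied by that formula. The easy inputs come first: the initial conditions are immediate, since $\mathcal{V}_{n,0}$ is one-dimensional (spanned by the defect-only link state, on which every $e_i$ acts as $0$ and whose self-pairing is $1$), so $\mathcal{R}_{n,0}=0$ and $L_{n,0}=1$, and $\mathcal{V}_{2p-1,p}=0$ for want of link states, so $L_{2p-1,p}=0$; and the case $r(n,p)=l$ is immediate too, for then $(n,p)$ is critical, so by the classification of $\mathcal{R}_{n,p}$ the radical vanishes and $L_{n,p}=\dim\mathcal{V}_{n,p}=d_{n,p}$.

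Next I would record the two structural facts driving the rest. (i) The branching short exact sequence $0\to\mathcal{V}_{n-1,p-1}\to\mathrm{Res}^{TL_n}_{TL_{n-1}}\mathcal{V}_{n,p}\to\mathcal{V}_{n-1,p}\to 0$, obtained by splitting the $(n,p)$-link states according to whether the last marked point is a defect or the right endpoint of a cup; in particular $d_{n,p}=d_{n-1,p}+d_{n-1,p-1}$. (ii) From Proposition \ref{maps}, the explicit form of $\phi_{n,p}$, and the irreducibility of the radical, whenever $\mathcal{R}_{n,p}\neq 0$ one has $[\mathcal{V}_{n,p}]=[\mathcal{L}_{n,p}]+[\mathcal{L}_{n,p^-}]$ in the Grothendieck group with $\mathcal{R}_{n,p}\simeq\mathcal{L}_{n,p^-}$, where $p^-=p-\bigl(l-r(n,p)\bigr)$ is the smaller member of the symmetric pair containing $p$ (which has $p^-\ge 0$ exactly when $\mathcal{R}_{n,p}\neq0$), while $[\mathcal{V}_{n,p}]=[\mathcal{L}_{n,p}]$ when $\mathcal{R}_{n,p}=0$. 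Iterating (ii) along the strictly decreasing orbit $p=p_0>p_1>\cdots$, $p_{j+1}=(p_j)^-$ --- which is finite, satisfies $p_{2i}=p-il$ and $p_{2i+1}=p-il-\bigl(l-r(n,p)\bigr)$, and is truncated at the first $p_j$ with $\mathcal{R}_{n,p_j}=0$ --- yields the closed formula $L_{n,p}=\sum_{j\ge 0}(-1)^j d_{n,p_j}$.

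To finish, I would feed the Pascal rule into this sum, $L_{n,p}=\sum_j(-1)^j\bigl(d_{n-1,p_j}+d_{n-1,p_j-1}\bigr)$, and compare term by term with the corresponding closed formulas for $L_{n-1,p}$ and $L_{n-1,p-1}$. Because $n-2p+1$ shifts by one unit in passing between levels $n$ and $n-1$, for $1\le r(n,p)\le l-2$ the level-$(n-1)$ orbit of $p$ and that of $p-1$ interlock so that the two right-hand sums account for precisely the terms of $L_{n,p}$, giving $L_{n,p}=L_{n-1,p}+L_{n-1,p-1}$; for $r(n,p)=l-1$ the label $(n-1,p-1)$ is critical, and the $d_{n-1,p_j-1}$ contributions cancel among themselves, leaving $L_{n,p}=L_{n-1,p}$. (The degenerate case $\delta=0$, $n=2p$ falls under $r=l-1$, and the formula then returns $L_{2p,p}=L_{2p-1,p}=0$, as it must.)

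The step I expect to be the main obstacle is this final comparison of alternating sums, and in particular controlling where the three orbits terminate: away from the walls $p=0$ and $p=\lfloor n/2\rfloor$ they obey the same arithmetic and the identity is purely formal, but near the walls they need not terminate in step, and it is exactly these boundary discrepancies that the two initial conditions absorb (using the convention $L_{m,q}=0$ whenever $m-2q+1\le 0$). A tidier-looking but no easier alternative would be to prove the branching rule for the simple modules directly --- $\mathrm{Res}^{TL_n}_{TL_{n-1}}\mathcal{L}_{n,p}$ has $\mathcal{L}_{n-1,p}$ as a composition factor with multiplicity one, together with $\mathcal{L}_{n-1,p-1}$ with multiplicity one unless $r(n,p)\in\{l-1,l\}$ --- and sum dimensions; but establishing that branching rule requires the same linkage analysis.
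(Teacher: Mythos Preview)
The paper does not give a proof of this proposition: it sits in Section~\ref{Background}, which is a review of the finite $TL_n$ theory following \cite{RSA}, and the recursion is quoted there as a known result without argument. So there is no ``paper's own proof'' to compare against.

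That said, your approach is the standard one and is essentially correct. The closed formula $L_{n,p}=\sum_{j\ge 0}(-1)^j d_{n,p_j}$ is exactly what one obtains by iterating $d_{n,p}=L_{n,p}+L_{n,p^-}$ along the reflection orbit, and feeding in the Pascal rule $d_{n,p}=d_{n-1,p}+d_{n-1,p-1}$ reduces the recursion to a bookkeeping identity between three alternating sums. One small correction to your description of the interlocking: it is not that the level-$(n-1)$ orbit of $p$ reproduces the sequence $(p_j)$ and the orbit of $p-1$ reproduces $(p_j-1)$ separately; rather, each of the two level-$(n-1)$ orbits alternates between terms of the form $p_j$ and $p_j-1$, and it is their union that matches $\{p_j,p_j-1:j\ge 0\}$ with the correct signs. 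For $r(n,p)=l-1$ the telescoping you describe is the observation that $p_{2i}-1=p_{2i+1}$, so consecutive $d_{n-1,p_j-1}$ and $d_{n-1,p_{j+1}}$ cancel in pairs. Your caveat about the orbit endpoints near $p=0$ is the right place to be careful, and the initial conditions do absorb those edge effects once one adopts the convention $d_{n,p}=0$ for $p<0$.
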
	
As $TL_{n}$ is semisimple for $q$ generic, the projective indecomposables are just the standard representations. At roots of unity, denote the projective cover of $\mathcal{L}_{n,p}$ by $\mathcal{P}_{n,p}$. Then they are as follows:
\begin{prop}
If $(n,p)$ is critical, so $\mathcal{L}_{n,p}\simeq\mathcal{V}_{n,p}$, then $\mathcal{P}_{n,p}\simeq\mathcal{V}_{n,p}$. If $(n,p)$ is not critical, and there is no $p^{\prime}>p$ such that $(n,p),(n,p^{\prime})$ is a symmetric pair, then $\mathcal{P}_{n,p}\simeq\mathcal{V}_{n,p}$. If such a symmetric pair exists, then $\mathcal{P}_{n,p}$ is the non-trivial extension $0\rightarrow\mathcal{V}_{n,p^{\prime}}\rightarrow\mathcal{P}_{n,p}\rightarrow\mathcal{V}_{n,p}\rightarrow 0$.
\end{prop}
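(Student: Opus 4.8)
The plan is to read everything off the cellular structure of $TL_{n}$ recalled above, together with the explicit maps $\phi_{n,p}$. The first step is to pin down the decomposition numbers $[\mathcal{V}_{n,p}:\mathcal{L}_{n,q}]$. By the propositions above each $\mathcal{V}_{n,p}$ is uniserial of length at most two, with head $\mathcal{L}_{n,p}$ and, when nonzero, irreducible radical $\mathcal{R}_{n,p}$; moreover $\mathcal{R}_{n,p}\neq 0$ precisely when $(n,p)$ is non-critical and admits a symmetric partner $p^{-}<p$, in which case $\mathcal{R}_{n,p}=Im(\phi_{n,p^{-}})\cong\mathcal{L}_{n,p^{-}}$ with $p^{-}$ the reflection of $p$ through the critical line nearest to $p$ on the relevant side. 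Hence $[\mathcal{V}_{n,p}:\mathcal{L}_{n,q}]=1$ exactly when $q=p$, or when $\mathcal{R}_{n,p}\cong\mathcal{L}_{n,q}$ (equivalently $q<p$ and $q=p^{-}$), and is zero otherwise. A short combinatorial check on reflections of non-critical labels then shows that, for a fixed $q$, there is at most one $p>q$ with $[\mathcal{V}_{n,p}:\mathcal{L}_{n,q}]\neq 0$, namely the reflection $p=q^{+}$ of $q$ through the critical line nearest to $q$ on the other side, and that such a $p$ exists if and only if $q$ is non-critical and has a symmetric partner strictly larger than it.

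The second step is to invoke the reciprocity between standard filtrations of projectives and decomposition numbers for the cellular algebra $TL_{n}$: $\mathcal{P}_{n,q}$ carries a filtration by standard modules with $(\mathcal{P}_{n,q}:\mathcal{V}_{n,p})=[\mathcal{V}_{n,p}:\mathcal{L}_{n,q}]$ (see \cite{RSA}; alternatively this can be obtained from the cell filtration of the regular module together with $\dim TL_{n}=\sum_{p}(\dim\mathcal{V}_{n,p})^{2}$). Feeding in the first step, $(\mathcal{P}_{n,p}:\mathcal{V}_{n,p})=1$, $(\mathcal{P}_{n,p}:\mathcal{V}_{n,p^{+}})=1$ whenever $p^{+}$ exists, and all other multiplicities are zero. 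In the first two cases of the statement — $(n,p)$ critical, or $(n,p)$ non-critical with no symmetric partner above — there is no such $p^{+}$, so the standard filtration of $\mathcal{P}_{n,p}$ has the single term $\mathcal{V}_{n,p}$ and therefore $\mathcal{P}_{n,p}\cong\mathcal{V}_{n,p}$. For the critical case one can argue even more directly: if $(n,p)$ is critical then $\mathcal{L}_{n,p}$ cannot be a composition factor of any radical $\mathcal{R}_{n,b}$, since each nonzero $\mathcal{R}_{n,b}$ is an irreducible labelled by a non-critical pair, so $\mathcal{L}_{n,p}$ occurs only in $\mathcal{V}_{n,p}$ and the conclusion follows.

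In the remaining case $(n,p)$ is non-critical and has a symmetric partner $p'=p^{+}>p$, and the standard filtration of $\mathcal{P}_{n,p}$ has exactly the two terms $\mathcal{V}_{n,p}$ and $\mathcal{V}_{n,p'}$. Since $\mathcal{P}_{n,p}$ is the projective cover of $\mathcal{L}_{n,p}$ it surjects onto the unique standard module with head $\mathcal{L}_{n,p}$, namely $\mathcal{V}_{n,p}$, and the kernel is then filtered by the remaining term $\mathcal{V}_{n,p'}$, giving a short exact sequence $0\to\mathcal{V}_{n,p'}\to\mathcal{P}_{n,p}\to\mathcal{V}_{n,p}\to 0$. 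This does not split, since otherwise $\mathcal{P}_{n,p}\cong\mathcal{V}_{n,p}\oplus\mathcal{V}_{n,p'}$ would contradict the indecomposability of a projective cover; and it is the unique non-split such extension because $Ext^{1}(\mathcal{V}_{n,p},\mathcal{V}_{n,p'})$ is one-dimensional, which one verifies by applying $Hom(\mathcal{V}_{n,p},-)$ to this sequence and using $End(\mathcal{V}_{n,p'})\cong\mathbb{C}$ and $Hom(\mathcal{V}_{n,p},\mathcal{V}_{n,p'})\cong\mathbb{C}$ from Proposition \ref{maps}.

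The step I expect to demand the most care is the combinatorial bookkeeping in the first paragraph: verifying that $\mathcal{L}_{n,q}$ appears in at most one $\mathcal{V}_{n,p}$ with $p>q$, and pinning down exactly when it does, amounts to a careful analysis of how non-critical labels reflect across the critical lines, and it is there that the hypotheses "critical" and "has a symmetric partner above" get translated into statements about the shape of $\mathcal{P}_{n,p}$; once this is settled the rest is formal. The only other point requiring attention is the exceptional $\delta=0$ behaviour noted in Proposition \ref{maps} (the extra homomorphism $Hom(\mathcal{V}_{2,1},\mathcal{V}_{2,0})$ and the vanishing of $\mathcal{L}_{n,n/2}$), together with the fact that $TL_{n}$ fails to be quasi-hereditary there, which may force a separate treatment of a few small cases.
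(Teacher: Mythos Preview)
The paper does not actually prove this proposition: it is stated in Section~\ref{Background} as background material from the literature (with the idempotent construction attributed to \cite{MartinBook} and the cellular treatment to \cite{RSA}), and is later \emph{used} at the end of the proof of Theorem~3.2 rather than established there. So there is no ``paper's own proof'' to compare against.

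Your argument is the standard one and is essentially what appears in \cite{RSA}: compute the decomposition numbers from the uniserial structure of the $\mathcal{V}_{n,p}$ and the identification $\mathcal{R}_{n,p'}\cong\mathcal{L}_{n,p}$ coming from $\phi_{n,p}$, then invoke cellular reciprocity $(\mathcal{P}_{n,q}:\mathcal{V}_{n,p})=[\mathcal{V}_{n,p}:\mathcal{L}_{n,q}]$ to read off the standard filtration of $\mathcal{P}_{n,p}$. The only point where you should tighten the exposition is the meaning of ``symmetric pair'': in this paper (and in \cite{RSA}) it effectively refers to reflection through the \emph{nearest} critical line, so that for a given non-critical $q$ there is at most one larger symmetric partner $q^{+}$; once that is made explicit, the ``combinatorial bookkeeping'' you flag becomes immediate rather than something requiring a separate check. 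Your caveat about $\delta=0$ is appropriate --- reciprocity in the stated form requires the cell form on the top cell to be nonzero, which fails precisely there --- but the affected cases are small and handled directly in \cite{RSA}.
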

A construction of the projective representations using idempotents was given in \cite{MartinBook}. We give an alternative construction in Section \ref{Extensions}.\\

The possible extensions of irreducible and standard representations, as well as a complete list of indecomposable $TL_{n}$ representations was classified in \cite{RSA}. 
\begin{prop}\label{Ext of TL_n}
Let $(n,p),(n,p^{\prime})$, and $(n,p^{\prime}),(n,p^{\prime\prime})$, $p<p^{\prime}<p^{\prime\prime}$, be symmetric pairs. Then the non-trivial extensions of irreducible and standard representations are as follows:
\begin{itemize}
	\item $Ext(\mathcal{L}_{n,p},\mathcal{L}_{n,p^{\prime}})\simeq Ext(\mathcal{L}_{n,p^{\prime}},\mathcal{L}_{n,p})\simeq\mathbb{C}$
	\item	$Ext(\mathcal{V}_{n,p},\mathcal{L}_{n,p^{\prime}})\simeq\mathbb{C}$
	\item $Ext(\mathcal{L}_{n,p},\mathcal{V}_{n,p^{\prime\prime}})\simeq\mathbb{C}$
	\item $Ext(\mathcal{L}_{n,p},\mathcal{V}_{n,p^{\prime}})\simeq\mathbb{C}$ if there is no $p_{-}<p$ such that $(n,p_{-}),(n,p)$ is symmetric.
	\item $Ext(\mathcal{V}_{n,p},\mathcal{V}_{n,p^{\prime}})\simeq Ext(\mathcal{V}_{n,p},\mathcal{V}_{n,p^{\prime\prime}})\simeq\mathbb{C}$
	\item For $\delta=0$, $Ext(\mathcal{V}_{2p+2,p},\mathcal{L}_{2p+2,p})\simeq\mathbb{C}$
	\end{itemize}
\end{prop}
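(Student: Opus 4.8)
The plan is to derive the whole list by homological algebra from the structural facts already recalled: the Loewy series of the standard modules \(\mathcal{V}_{n,p}\) and of the projective covers \(\mathcal{P}_{n,p}\) (each of Loewy length at most three, the second layer of \(\mathcal{P}_{n,p}\) being \(\bigoplus_{r}\mathcal{L}_{n,r}\) over the one or two adjacent symmetric partners \(r\) of \(p\)), the identification \(\mathcal{R}_{n,p}\simeq\mathcal{L}_{n,p_{-}}\) for the adjacent smaller symmetric partner \(p_{-}\), Proposition \ref{maps} for the \(\mathrm{Hom}\)-spaces, and the identity \(\dim\mathrm{Hom}(\mathcal{P}_{n,q},M)=[M:\mathcal{L}_{n,q}]\). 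The one extra ingredient I would set up is a ``BGG-type'' projective resolution of \(\mathcal{V}_{n,p}\): letting \(p=p_{0}<p_{1}<p_{2}<\cdots\) be the chain of successive adjacent larger symmetric partners (terminating once some \(p_{k}\) has none), splicing the short exact sequences \(0\to\mathcal{V}_{n,p_{k+1}}\to\mathcal{P}_{n,p_{k}}\to\mathcal{V}_{n,p_{k}}\to 0\) provided by the description of \(\mathcal{P}_{n,p_{k}}\) yields an exact, and in fact minimal, resolution \(\cdots\to\mathcal{P}_{n,p_{2}}\to\mathcal{P}_{n,p_{1}}\to\mathcal{P}_{n,p}\to\mathcal{V}_{n,p}\to 0\). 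For the cases with \(\mathcal{L}_{n,p}\) in the first variable I would either use the first terms of the minimal projective resolution of \(\mathcal{L}_{n,p}\) (first syzygy \(\mathrm{rad}\,\mathcal{P}_{n,p}\), projective cover \(\bigoplus_{r}\mathcal{P}_{n,r}\)) or dualise by the contravariant duality \(D\) arising from the cellular anti-involution, under which \(D\mathcal{L}_{n,p}\simeq\mathcal{L}_{n,p}\) and, when \(\mathcal{R}_{n,p}\neq 0\), \(D\mathcal{V}_{n,p}\) is the module with head \(\mathcal{L}_{n,p_{-}}\) and socle \(\mathcal{L}_{n,p}\).

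First bullet. Since \(\mathcal{P}_{n,p}\) is the projective cover of \(\mathcal{L}_{n,p}\), \(\dim\mathrm{Ext}^{1}(\mathcal{L}_{n,p},\mathcal{L}_{n,q})=[\mathrm{rad}\,\mathcal{P}_{n,p}/\mathrm{rad}^{2}\mathcal{P}_{n,p}:\mathcal{L}_{n,q}]\), which by the structure of \(\mathcal{P}_{n,p}\) equals \(1\) exactly when \((n,p),(n,q)\) is a symmetric pair and \(0\) otherwise; applying this to both orderings gives \(\mathrm{Ext}^{1}(\mathcal{L}_{n,p},\mathcal{L}_{n,p'})\simeq\mathrm{Ext}^{1}(\mathcal{L}_{n,p'},\mathcal{L}_{n,p})\simeq\mathbb{C}\). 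Non-triviality is witnessed by \(\mathcal{V}_{n,p'}\), an indecomposable extension of \(\mathcal{L}_{n,p'}\) by \(\mathcal{L}_{n,p}\), in one direction, and by the length-two quotient of \(\mathcal{P}_{n,p}\) with head \(\mathcal{L}_{n,p}\) and socle \(\mathcal{L}_{n,p'}\) in the other.

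Bullets 2--5. The BGG resolution gives \(\mathrm{Ext}^{\bullet}(\mathcal{V}_{n,p},M)=H^{\bullet}\) of the complex whose \(k\)-th term is \(\mathrm{Hom}(\mathcal{P}_{n,p_{k}},M)\), of dimension \([M:\mathcal{L}_{n,p_{k}}]\). With \(M=\mathcal{L}_{n,q}\) this complex has a single nonzero term, in the degree \(k\) with \(p_{k}=q\) (and it is zero unless \(q\) lies in the chain), so all its differentials vanish and \(\mathrm{Ext}^{1}(\mathcal{V}_{n,p},\mathcal{L}_{n,q})\simeq\mathbb{C}\) exactly when \(q=p_{1}=p'\), which is bullet 2. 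With \(M=\mathcal{V}_{n,q}\) the complex is nonzero only in the degrees with \(p_{k}\in\{q,q_{-}\}\); the generator of the term at \(p_{k}=q_{-}\) has simple image in \(\mathcal{V}_{n,q}\), so it factors through the head of \(\mathcal{P}_{n,p_{k}}\) and the incident differentials vanish, whence \(\mathrm{Ext}^{1}(\mathcal{V}_{n,p},\mathcal{V}_{n,q})\simeq\mathbb{C}\) precisely for \(q=p'\) and \(q=p''\) (using \(p''_{-}=p'\)), which is bullet 5, with \(H^{0}\) recovering \(\mathrm{Hom}(\mathcal{V}_{n,p},\mathcal{V}_{n,q})\) consistently with Proposition \ref{maps}. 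Bullets 3 and 4 follow by dualising, \(\mathrm{Ext}^{1}(\mathcal{L}_{n,p},\mathcal{V}_{n,q})\simeq\mathrm{Ext}^{1}(D\mathcal{V}_{n,q},\mathcal{L}_{n,p})\), applying \(\mathrm{Hom}(-,\mathcal{L}_{n,p})\) to \(0\to\mathcal{L}_{n,q}\to D\mathcal{V}_{n,q}\to\mathcal{L}_{n,q_{-}}\to 0\), and feeding in the values of \(\mathrm{Ext}^{1}\) between simple modules from the first bullet; the hypothesis in bullet 4 is exactly the condition under which the connecting map into \(\mathrm{Ext}^{2}(\mathcal{L}_{n,p},\mathcal{L}_{n,p})\) is injective (so \(\mathrm{Ext}^{1}\) drops to \(0\)), which corresponds to \(\mathcal{P}_{n,p}\) being uniserial rather than a diamond.

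Bullet 6 and completeness. When \(\delta=0\) and \(n=2p\) the bilinear form is identically zero, so \(\mathcal{R}_{2p,p}=\mathcal{V}_{2p,p}\), and the exceptional isomorphism \(\mathrm{Hom}(\mathcal{V}_{2,1},\mathcal{V}_{2,0})\simeq\mathbb{C}\) of Proposition \ref{maps} plays the role of the usual symmetric-pair \(\mathrm{Hom}\); rerunning the above argument with these two changes produces \(\mathrm{Ext}^{1}(\mathcal{V}_{2p+2,p},\mathcal{L}_{2p+2,p})\simeq\mathbb{C}\). Completeness of the list is then automatic, since for every pair of labels not covered by a bullet the relevant complex or radical head makes the corresponding cohomology group zero. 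The step I expect to be the main obstacle is the combinatorial bookkeeping of which of the partners \(p_{-},p',p''\) actually exist as legitimate labels \(0\le\cdot\le\lfloor n/2\rfloor\) --- equivalently, the geometry of the critical lines in the Bratelli diagram --- since this determines the lengths of the resolutions, which terms of the long exact sequences appear, and hence whether each bound \(\dim\mathrm{Ext}^{1}\le 1\) is attained; relatedly, realising each attained extension by an explicit non-split module (always a standard module, a projective, a subquotient of one of these, or a longer uniserial ``string'' indecomposable, after checking its head and socle) needs care.
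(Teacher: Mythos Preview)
The paper does not prove this proposition. It sits in the background Section~\ref{Background} as a summary of results quoted from \cite{RSA} and \cite{BRSA}, with no argument given; there is therefore no proof in the paper to compare your proposal against.

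Your homological approach --- splicing the short exact sequences $0\to\mathcal{V}_{n,p_{k+1}}\to\mathcal{P}_{n,p_{k}}\to\mathcal{V}_{n,p_{k}}\to 0$ into a BGG-type minimal projective resolution of $\mathcal{V}_{n,p}$, reading off $\mathrm{Ext}$ groups from the multiplicities $[M:\mathcal{L}_{n,p_{k}}]$, and handling the cases with a simple module in the first variable via the cellular duality $D$ --- is a standard and correct route to these computations, and is in the spirit of the arguments in the cited references.

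One slip to straighten out: in your treatment of the fourth bullet the two cases are reversed. The hypothesis ``there is no $p_{-}$'' is precisely when $\mathcal{P}_{n,p}$ is uniserial with Loewy layers $\mathcal{L}_{n,p},\,\mathcal{L}_{n,p'},\,\mathcal{L}_{n,p}$; in that case $\mathcal{P}_{n,p}$ itself is a non-split extension $0\to\mathcal{V}_{n,p'}\to\mathcal{P}_{n,p}\to\mathcal{L}_{n,p}\to 0$, so $\mathrm{Ext}^{1}(\mathcal{L}_{n,p},\mathcal{V}_{n,p'})\simeq\mathbb{C}$ (equivalently, in your long exact sequence the connecting map lands in $\mathrm{Ext}^{2}(\mathcal{L}_{n,p},\mathcal{L}_{n,p})=0$, since here $\Omega^{2}\mathcal{L}_{n,p}\simeq\mathcal{V}_{n,p''}$ has simple head $\mathcal{L}_{n,p''}$). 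It is when $p_{-}$ \emph{does} exist --- so $\mathcal{P}_{n,p}$ is a diamond --- that no length-three quotient of $\mathcal{P}_{n,p}$ has $\mathcal{V}_{n,p'}$ as its radical, the connecting map is injective, and $\mathrm{Ext}^{1}(\mathcal{L}_{n,p},\mathcal{V}_{n,p'})=0$. This is a slip in the write-up rather than in the method.
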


\section{$TL_{\infty}(\delta)$}\label{TL infty}
Define the algebra $TL_{\infty}(\delta)$ as generated by $\{\mathbf{1},e_{i}\}$, $i\in\mathbb{N}$, satisfying the usual relations, where we only consider finite products of generators, and finite sums of basis elements.\\

\subsection{Finite dimensional representations}
We begin by considering finite dimensional representations of $TL_{\infty}$. The trivial representation is given by $e_{i}v=0$ for all $i$. For $\delta\neq \pm 1$, this is the only one-dimensional representation. However, for $\delta= \pm 1$, there is a second given by $e_{i}=\delta v$. For higher dimensions, we have the following result:

\begin{thm}\label{fin dim reps}
The only finite dimensional representations of $TL_{\infty}$ are direct sums of one-dimensional representations.
\end{thm}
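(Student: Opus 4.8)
The plan is to show that every generator $e_i$ must act by one and the same operator $A$ on $V$, and that this single operator is forced to be almost trivial. Write $E_i:=\rho(e_i)\in\mathrm{End}(V)$, so that $E_i^2=\delta E_i$, $E_iE_{i\pm1}E_i=E_i$, and $E_iE_j=E_jE_i$ whenever $|i-j|>1$. Once one knows $E_i=A$ for all $i$, the relations collapse to $A^2=\delta A$ together with $A^3=A$, hence $(\delta^2-1)A=0$: either $A=0$, in which case every $e_i$ acts by $0$ and $V$ is a direct sum of copies of the trivial representation, or $\delta=\pm1$ and $A(A-\delta)=0$, so $A$ is diagonalisable with spectrum in $\{0,\delta\}$; writing $V=\ker A\oplus\ker(A-\delta)$ then gives two $TL_\infty$-submodules on which every $e_i$ acts by $0$, resp.\ by $\delta$, and any choice of basis of each summand exhibits $V$ as a direct sum of one-dimensional submodules. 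So the entire content is the claim that all the $E_i$ coincide.

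For $\delta\neq0$ I would argue as follows. The operators $f_i:=\delta^{-1}E_i$ with $i$ odd are idempotents, and they pairwise commute since distinct odd indices differ by an even number $\geq2$. The subalgebra of $\mathrm{End}(V)$ they generate is commutative, finite dimensional, and generated by idempotents, hence isomorphic to $\mathbb{C}^{s}$ for some $s$; since $\mathbb{C}^{s}$ contains only finitely many idempotents, the set $\{E_i:i\ \mathrm{odd}\}$ is finite. In particular two of $E_1,E_5,E_9,\dots$ agree, so $E_a=E_b=:A$ for some odd $a<b$ with $b-a\geq4$. Because $|(a+1)-b|>1$, $E_{a+1}$ commutes with $E_b=A$; combining this with $E_aE_{a+1}E_a=E_a$, $E_{a+1}E_aE_{a+1}=E_{a+1}$ and $A^2=\delta A$ gives $\delta AE_{a+1}=A$ and $\delta AE_{a+1}=E_{a+1}$, whence $E_{a+1}=A$. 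Every index $j$ lies at distance $>1$ from at least one of $a,b$ (as $b-a\geq4$), so every $E_j$ commutes with $A$; feeding this into the same three relations and inducting outward from $a$ in both directions propagates $E_j=A$ to all $j\geq1$.

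The case $\delta=0$ requires a different device, since the rescaling above is unavailable, but it is the easy one. Here $l=2$, so by the semisimplicity criterion recalled in Section~\ref{Background} the algebra $TL_n$ is semisimple for odd $n$, with simple modules the standard modules $\mathcal{V}_{n,p}$ of dimension $d_{n,p}$. From $d_{n,p}=d_{n-1,p}+d_{n-1,p-1}$ one gets $d_{n,p}\geq d_{n-p+1,1}=n-p\geq n/2$ for $1\leq p\leq\lfloor n/2\rfloor$. Hence, once an odd $n$ exceeds $2\dim V$, the restriction of $V$ to $TL_n$ is a direct sum of copies of the trivial module $\mathcal{V}_{n,0}$, so $e_i$ acts by $0$ for every $i<n$; letting $n\to\infty$ shows that every $e_i$ acts by $0$ on $V$, and $V$ is a direct sum of trivial submodules.

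The main obstacle is that $\mathrm{End}(V)$ is infinite, so one cannot simply pigeonhole the sequence $(E_i)$; the commuting-idempotent structure, available precisely when $\delta\neq0$, is what replaces finiteness and forces a genuine repetition among the $E_i$. The remaining delicate point is passing from ``two far-apart generators act by the same operator'' to ``all generators act by the same operator'', and this is exactly where the infinitely many long-range commutation relations of $TL_\infty$ (which have no counterpart inside any single $TL_n$) are essential: note that within a fixed $TL_n$ there really do exist nontrivial irreducibles of bounded dimension, such as $\mathcal{V}_{3,1}$ when $\delta=0$ or the one-dimensional $\mathcal{L}_{2p,p}$ when $\delta=\pm1$, so no argument that works one $TL_n$ at a time can suffice.
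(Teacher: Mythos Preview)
Your argument is correct and takes a genuinely different route from the paper's.

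The paper proves the theorem via two lemmas: first a representation-theoretic dimension bound (Lemma~\ref{dim of $TL_{n}$ reps}) asserting that every indecomposable $TL_n$-module is either one-dimensional or of dimension at least $n-2$, established by a fairly lengthy case analysis over the possible values of $l$ using the recursion of Proposition~\ref{Recursion} and the classification of extensions in Proposition~\ref{Ext of TL_n}; second a small matrix lemma (Lemma~\ref{diagonal entries}) showing that diagonal $\{0,\pm1\}$-matrices satisfying the $TL$ relations must all coincide. One then restricts a putative $k$-dimensional $TL_\infty$-module to $TL_n$ with $n>k+2$ and invokes the dimension bound. Your proof, by contrast, for $\delta\neq 0$ avoids the $TL_n$ representation theory entirely: you use that the odd-indexed generators give an infinite family of pairwise commuting idempotents in the finite-dimensional algebra $\mathrm{End}(V)$, forcing a coincidence $E_a=E_b$ with $b-a\geq 4$, and then exploit the long-range commutation $[E_j,A]=0$ together with the neighbour relations to propagate $E_j=A$ to all $j$. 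This is more elementary and more self-contained; it also makes transparent exactly which structural feature of $TL_\infty$ (infinitely many far-commuting generators) is doing the work. The trade-off is that your idempotent trick breaks down when $\delta=0$ (the $E_i$ become nilpotent rather than idempotent), and there you fall back on a restriction argument in the spirit of the paper's proof, though in a much lighter form: since $TL_n$ is genuinely semisimple for odd $n$ when $\delta=0$, you only need the elementary bound $d_{n,p}\geq n-p$ on standard-module dimensions, rather than the full analysis of irreducible dimensions at arbitrary roots of unity. Your closing remark that ``no argument that works one $TL_n$ at a time can suffice'' slightly overstates the case --- the paper's proof \emph{is} essentially a one-$TL_n$-at-a-time argument, once the dimension bound is in hand --- but this is a matter of commentary, not of mathematics.
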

To prove this, we first need the following two lemmas:
\begin{lemma}\label{dim of $TL_{n}$ reps}
Let $X$ be an indecomposable $TL_{n}(\delta)$ representation. Then either $X$ is one-dimensional, or $dim X\geq n-2$.	
\end{lemma}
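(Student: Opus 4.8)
The plan is to use the classification of indecomposable $TL_{n}$ representations, together with the dimension formula from Proposition \ref{Recursion}, to extract a lower bound on dimensions. First I would recall that every indecomposable $TL_{n}$ representation appearing in the classification of \cite{RSA} is built from the irreducibles $\mathcal{L}_{n,p}$ and standard modules $\mathcal{V}_{n,p}$ via the extensions listed in Proposition \ref{Ext of TL_n}; in particular its dimension is bounded below by $\min_{p}\dim\mathcal{L}_{n,p}$ taken over the relevant range of $p$, or more crudely by the smallest dimension of any irreducible constituent that can occur. So the core of the argument reduces to showing that every irreducible $\mathcal{L}_{n,p}$ is either one-dimensional or has dimension at least $n-2$.

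The key step is therefore to analyze $L_{n,p}$ using the recursion in Proposition \ref{Recursion}. For $q$ generic, $\mathcal{L}_{n,p}=\mathcal{V}_{n,p}$ and $\dim\mathcal{V}_{n,p}=d_{n,p}=\binom{n}{p}-\binom{n}{p-1}$, which equals $1$ when $p=0$ and is easily seen to be at least $n-1$ for $1\le p\le\lfloor n/2\rfloor$ (the case $p=1$ gives $n-1$, and $d_{n,p}$ grows with $p$ in this range). For $q$ a root of unity with parameter $l$, I would induct on $n$: the initial conditions $L_{n,0}=1$ and $L_{2p-1,p}=0$ handle the trivial and vanishing cases, and for the three cases of the recursion — $r(n,p)=l$ giving $L_{n,p}=d_{n,p}$, $r(n,p)=l-1$ giving $L_{n,p}=L_{n-1,p}$, and otherwise $L_{n,p}=L_{n-1,p}+L_{n-1,p-1}$ — one checks that a non-one-dimensional $\mathcal{L}_{n,p}$ inherits a dimension bound of the form $\ge n-2$ from the $(n-1)$-level bound $\ge (n-1)-2 = n-3$, with the $+L_{n-1,p-1}$ term (or the jump to $d_{n,p}$) providing the extra $+1$. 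The slightly delicate point is the case $r(n,p)=l-1$, where $L_{n,p}=L_{n-1,p}$ does not gain anything from the recursion; here I would argue that $p$ must be large enough (since $n-2p+1\equiv l-1\pmod l$) that a direct estimate on $L_{n-1,p}$, or iterating the recursion one more step down, still yields $\ge n-2$, unless we are in fact in a one-dimensional situation.

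The main obstacle I anticipate is bookkeeping the boundary and critical cases cleanly: specifically, verifying that the cases where $L_{n,p}$ could a priori be small but bigger than $1$ (small $p$, or $(n,p)$ near a critical line, or the exceptional $\delta=0$ behavior noted in the propositions) do not produce dimensions strictly between $2$ and $n-3$. I would organize this by first pinning down exactly which $(n,p)$ give $L_{n,p}=1$ — these should be $p=0$ together with the critical labels having $d_{n,p}=1$, i.e. $n-2p+1=l$ with $d_{n,p}=1$ forcing $p\le 1$ — and then showing every other $\mathcal{L}_{n,p}$ has $L_{n,p}\ge n-2$ by the inductive argument above, handling $\delta=0$ and $n$ even separately using the modified list of irreducibles. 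Once the irreducible bound is established, extending it to all indecomposables is immediate since an indecomposable is a (possibly iterated) extension and hence has dimension at least that of any one of its composition factors, all of which are $\mathcal{L}_{n,p}$'s.
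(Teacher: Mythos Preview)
Your overall strategy --- bound $L_{n,p}$ via the recursion of Proposition~\ref{Recursion}, then pass to indecomposables via the extension classification --- is exactly the paper's approach. But two steps in your plan are genuinely incomplete.

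First, your description of which irreducibles are one-dimensional is wrong. You guess these are $p=0$ together with critical labels having $d_{n,p}=1$. In fact, for $l=3$ the recursion (combined with $L_{2p-1,p}=0$) forces $L_{2p,p}=L_{2p+1,p}=1$ for \emph{every} $p$, so $\mathcal{L}_{n,\lfloor n/2\rfloor}$ is one-dimensional for all $n$; these are neither $p=0$ nor critical. The paper handles $l=2$ and $l=3$ by separate direct arguments, checks $n\le 6$ by hand, and only then runs the clean induction for $l\ge 4$, $n\ge 6$ (where, as you correctly anticipate, the $r(n,p)=l-1$ case is dealt with by iterating the recursion once more to get $L_{n,p}\ge 2(n-4)\ge n-2$).

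Second, your final step is not ``immediate''. Saying an indecomposable has dimension at least that of a composition factor only yields $\ge 1$ when all composition factors are one-dimensional. You must explicitly rule out nontrivial indecomposables built entirely from one-dimensional pieces. The paper does this via Proposition~\ref{Ext of TL_n}: $\mathrm{Ext}^{1}$ between irreducibles is nonzero only for symmetric pairs, and the only symmetric pairs of one-dimensional irreducibles are $(\mathcal{L}_{3,0},\mathcal{L}_{3,1})$ and $(\mathcal{L}_{4,0},\mathcal{L}_{4,2})$ at $l=3$, where the resulting two-dimensional extensions satisfy $2\ge n-2$. Without this check (which depends on knowing the full list of one-dimensional irreducibles, hence on fixing the first gap), the passage from irreducibles to indecomposables fails.
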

\begin{proof}
We split this into several parts, and consider the cases $l=2$, $l=3$, and $n\leq 6$ separately. We then prove it inductively for $l\geq 4$ and $n\geq 6$. For irreducible representations $\mathcal{L}_{n,p}$, we denote $dim\mathcal{L}_{n,p}:=L_{n,p}$.\\
For $q$ generic, the irreducible representations have dimension $d_{n,p}$, for which $d_{n,0}=1$, and $d_{n,p}\geq n-1$ for $p\geq 1$. For $l=2$, for $n$ odd, the irreducibles are the standard representations, so $L_{n,p}=d_{n,p}$. For $n$ even, we have $r(n,p)=1$, so by the recursion relation in Proposition \ref{Recursion}, we have $L_{n,p}=d_{n-1,p}$.\\

For $l=3$, we have the cases $r(n,p)=1,2$, to consider. We first note that the case $r(n,p)=2$ implies the case $r(n,p)=1$. For the $r(n,p)=2$ case, $L_{n,1}=d_{n-2,1}+d_{n-4,0}=n-2$. For $L_{n,p}$, $2\leq p<\lfloor\frac{n}{2}\rfloor$, we can rewrite $L_{n,p}=L_{n-2,p-1}+d_{n-2,p}$ it follows from induction that if $L_{n-2,p-1}\geq n-4$, then $L_{n,p}\geq n-2$ as $d_{n,p}\geq 2$. For $p=\lfloor\frac{n}{2}\rfloor$, because of the condition $L_{2p-1,p}=0$, we instead get $L_{2p,p}=L_{2p+1,p}=1$.\\

For $n\leq 6$, the cases $l=4,5,6,7,8$, can be computed directly and checked. For $l\geq 9$, we note that as $0\leq p\leq \lfloor\frac{n}{2}\rfloor$, we have $1\leq r(n,p)\leq l-2$. Hence the only part of the recursion relation used is $L_{n,p}=L_{n-1,p}+L_{n-1,p-1}$. It follows that for $n\leq 6$ and $l\geq 9$, we have $L_{n,p}=d_{n,p}$.\\

Now we assume that	$l\geq 4$ and $n\geq 6$. Assume that n is minimal such that there is a $1\leq p\leq \lfloor\frac{n}{2}\rfloor$ such that $L_{n,p}<n-2$. Clearly $r(n,p)\neq 0$. If $r(n,p)=l-1$, then $L_{n,p}=L_{n-1,p}$, with $r(n-1,p)=l-2$. If $n-1=2p-1$, so that $L_{n-1,p}=0$, then $r(n-1,p)=0$, which requires $l=2$. Hence $L_{n-1,p}$ is non-zero, and we have $L_{n,p}=L_{n-2,p}+L_{n-2,p-1}$. If $n-2=2p-1$, so that $L_{n-2,p}=0$, then $r(n-2,p)=0$, which requires $l=3$. Hence $L_{n-2,p}$ and $L_{n-2,p-1}$ are non-zero. We assumed $n$ was minimal such that $L_{n,p}<n-2$. Hence we must have $L_{n-2,p},L_{n-2,p-1}\geq n-4$, and so $L_{n,p}\geq 2n-8$, and hence $L_{n,p}\geq n-2$ as $n\geq 6$.\\
If $1\leq r(n,p)\leq l-2$, then $L_{n,p}=L_{n-1,p}+L_{n-1,p-1}$. If $L_{n-1,p},L_{n-1,p-1}$ are both non-zero, then as $n$ is minimal, $L_{n-1,p},L_{n-1,p-1}\geq n-3$, and so $L_{n,p}\geq 2n-6\geq n-2$. If $n=2p$, so that $L_{n-1,p}=0$, we have $r(n-1,p-1)=2\leq l-2$ (as $l\neq 2,3$), and so $L_{n,p}=L_{n-1,p-1}=L_{n-2,p-1}+L_{n-2,p-2}\geq 2n-8\geq n-2$.\\

Hence we have shown that all irreducible $TL_{n}$ representations have dimension greater than or equal to $n-2$, except for the trivial representation, and $\mathcal{L}_{n,\lfloor\frac{n}{2}\rfloor}$ for $l=3$. Finally we note that the classification of extensions of irreducible representations in Proposition \ref{Ext of TL_n} that there is a non-trivial extension of two irreducible representations, if and only if they form a symmetric pair. Hence the only non-trivial extensions of one-dimensional representations are the extensions of $\mathcal{L}_{3,0},\mathcal{L}_{3,1}$ and $\mathcal{L}_{4,0},\mathcal{L}_{4,2}$ for $l=3$, and both these extensions have dimension $\geq n-2$.
\end{proof}
\begin{lemma}\label{diagonal entries}
Let $A$ be an $n\times n$ diagonal matrix with entries $a_{i}\in\{0,1\}$ or $\{0,-1\}$. Let $B$ be an arbitrary $n\times n$ matrix. If $ABA=A$, $BAB=B$, and $AB=BA$, then $A=B$.
\end{lemma}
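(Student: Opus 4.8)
The plan is to use that, up to an overall sign, $A$ is an orthogonal projection onto a coordinate subspace, and that commuting with such a projection is a very rigid condition. First I would dispose of the sign: if the diagonal entries of $A$ lie in $\{0,-1\}$, replace the pair $(A,B)$ by $(-A,-B)$. Since $(-A)(-B)(-A)=-ABA=-A$, $(-B)(-A)(-B)=-BAB=-B$, and $(-A)(-B)=AB=BA=(-B)(-A)$, the new pair satisfies the same three identities and $-A$ has entries in $\{0,1\}$; so it suffices to treat that case and then conclude $A=B$. Assuming now $a_i\in\{0,1\}$, permute the standard basis so that $A=\mathrm{diag}(I_k,0_{n-k})$ for some $0\le k\le n$; in particular $A^2=A$, with $1$-eigenspace the span of the first $k$ basis vectors and $0$-eigenspace the span of the remaining $n-k$.

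Next I would exploit $AB=BA$. A matrix commuting with $A$ preserves each eigenspace of $A$, hence is block diagonal with respect to the decomposition $\mathbb{C}^n=\mathrm{Im}(A)\oplus\ker(A)$: write $B=\mathrm{diag}(B_1,B_2)$ with $B_1\in M_k(\mathbb{C})$ and $B_2\in M_{n-k}(\mathbb{C})$. Then the remaining two relations can be read off blockwise. From $ABA=\mathrm{diag}(B_1,0)$ and $ABA=A=\mathrm{diag}(I_k,0)$ we get $B_1=I_k$. From $BAB=\mathrm{diag}(B_1^2,0)$ and $BAB=B=\mathrm{diag}(B_1,B_2)$ we get $B_2=0$ (the block condition $B_1^2=B_1$ being automatic from $B_1=I_k$). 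Hence $B=\mathrm{diag}(I_k,0_{n-k})=A$, as required.

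I do not expect a genuine obstacle here: the argument is essentially a two-block matrix computation. The only points needing a little care are the sign reduction at the start and the standard fact that commuting with a diagonal $0$–$1$ matrix forces block-diagonal form relative to its eigenspace decomposition (which is why we arrange $A$ in the normalized form before invoking $AB=BA$). Everything else is a direct substitution into $ABA=A$ and $BAB=B$.
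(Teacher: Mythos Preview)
Your proof is correct and follows the same logical skeleton as the paper's: use $AB=BA$ to kill the off-diagonal cross-block entries, use $ABA=A$ to pin down the block where $A$ is invertible, and use $BAB=B$ to kill the block where $A$ vanishes. The only difference is presentational: the paper carries out the computation entry by entry (deriving $a_ib_{ij}=b_{ij}a_j$, $a_i^2b_{ii}=a_i$, $\sum_j a_jb_{ij}b_{jk}=b_{ik}$ and chasing the consequences), whereas you first normalize $A$ to $\mathrm{diag}(I_k,0)$ and work with $2\times 2$ block matrices, which makes the argument shorter and more transparent.
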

\begin{proof}
Let $B:=(b_{ij})_{1\leq i,j\leq n}$. Considering $AB=BA$ we get $a_{i}b_{ij}=b_{ij}a_{j}$. Hence $b_{ij}=0$ unless $a_{i}=a_{j}$. Next, for $ABA=A$, we have $a_{i}^{2}b_{ii}=a_{i}$ and $a_{i}a_{j}b_{ij}=0$ for $i\neq j$. Hence if $a_{i}\neq 0$, then $b_{ii}=a_{i}$, and if $a_{i}a_{j}=1$ for $i\neq j$ then $b_{ij}=0$. Finally, considering $BAB=B$, we get $\sum\limits_{j=1}^{n}a_{j}b_{ij}b_{jk}=b_{ik}$. If $a_{i}=a_{k}=0$, but $a_{j}\neq 0$, we have $b_{ij}=b_{jk}=0$. Hence $b_{ik}=0$, and so we have $B=A$.
\end{proof}

Now we can prove Theorem \ref{fin dim reps}:
\begin{proof}
Assume that there is a finite dimensional $TL_{\infty}$ representation $X$, of dimension $k$. By Lemma \ref{diagonal entries}, if the generators $e_{1}$ and $e_{2}$ act as diagonal matrices with entries $\{0,1\}$ or $\{0,-1\}$, then every generator must act as the same diagonal matrix, and so $X$ must be a direct sum of one-dimensional representations. Assume otherwise, then consider the restriction of $X$ to a $TL_{n}$ representation for some $n>k+2$. Then by Lemma \ref{dim of $TL_{n}$ reps}, the only $TL_{n}$ representations in the decomposition of $X$ are one dimensional, and so the generators $e_{i}$, $1\leq i\leq n-1$, act as diagonal matrices, and hence $X$ is a direct sum of one-dimensional $TL_{\infty}$ representations.
\end{proof}

\subsection{$TL_{\infty}$ Representations generated by link states.}
We now aim to study representations of $TL_{\infty}$ by generalizing the standard representations of $TL_{n}$. We start by defining an \textit{infinite link state} as a link state with infinitely many points labelled by $n\in\mathbb{N}$. Given an infinite link state $w$, we define $s(w)\in\mathbb{N}\cup\{\infty\}$ as the number of strings on $w$, and $c(w)\in\mathbb{N}\cup\{\infty\}$ as the number of cups on $w$, we then define $\mathcal{X}(w)$ as the representation generated by $w$. We will generally prove results about $\mathcal{X}(w)$ by using results from the $TL_{n}$ representation theory. To do this, we will need the following definition:
\begin{defin}
	Define the \textit{restriction to length $n$} of a link-state as the link-state formed by deleting all points greater than $n$. Note that we do not allow the restriction to cut cups, as this will not always respect the bilinear form, as it introduces an extra string. In this case, we instead restrict to the smallest $n^{\prime}>n$ that doesn't require the cutting of a cup.
\end{defin}
It is clear that if two link states, $w,w^{\prime}$ differ at infinitely many points, then the representation generated by them must decompose into a direct sum, i.e. $\mathcal{X}(w,w^{\prime})\simeq\mathcal{X}(w)\oplus\mathcal{X}(w^{\prime})$. However it is not obvious if this is true when $w$ and $w^{\prime}$ only differ at finitely many points. To resolve this, we need the following: 
\begin{lemma}\label{map between link states}
	Let $x,y$ be two finite sums of infinite link states, that only differ up to the first $n$ points. If $x$ and $y$ both restrict to $\mathcal{V}_{n^{\prime},p}$, $x,y\notin\mathcal{R}_{n^{\prime},p}$, for $n^{\prime}\geq n$, then there are $a,b\in TL_{\infty}$ such that $x=ay$ and $y=bx$.
\end{lemma}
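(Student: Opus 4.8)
<br>

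The plan is to reduce the statement to the finite Temperley--Lieb setting via Lemma \ref{lemma 1}. First I would observe that since $x$ and $y$ are finite sums of infinite link states differing only in the first $n$ points, there is some common length $n' \geq n$ to which every infinite link state occurring in $x$ or $y$ restricts, and moreover all of them restrict into the \emph{same} standard module $\mathcal{V}_{n',p}$ (the number of cups is determined by the tails, which agree). The restriction map identifies $x$ and $y$ with elements $\bar x, \bar y \in \mathcal{V}_{n',p}$, and the hypothesis says $\bar x, \bar y \notin \mathcal{R}_{n',p}$.

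Next I would invoke Lemma \ref{lemma 1}: for link states in $\mathcal{V}_{n',p}$ we have $\lVert \bar x\, \bar y\rVert \bar z = \langle \bar y, \bar z\rangle \bar x$ for all $\bar z$, and symmetrically $\lVert \bar y\, \bar x\rVert \bar z = \langle \bar x, \bar z\rangle \bar y$. Since $\bar x \notin \mathcal{R}_{n',p}$ there is a link state $\bar z$ with $\langle \bar x, \bar z\rangle \neq 0$, so a suitable $TL_{n'}$-element $a_0 := \langle \bar x, \bar z\rangle^{-1}\lVert \bar y\, \bar x\rVert$ — wait, I need to be careful with which side: I want $a_0 \bar y = \bar x$. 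Taking $a_0$ proportional to $\lVert \bar x\, \bar y \rVert$ and applying it to a chosen $\bar z_0$ is not quite the same as applying it to $\bar y$; instead I would use that $\bar x$ is cyclic and, more directly, that $\lVert \bar x\, \bar z\rVert \bar y = \langle \bar z, \bar y\rangle \bar x$ together with the existence of $\bar z$ with $\langle \bar z, \bar y\rangle \neq 0$ (possible since $\bar y \notin \mathcal{R}_{n',p}$). Then $a := \langle \bar z, \bar y\rangle^{-1}\lVert \bar x\, \bar z\rVert \in TL_{n'} \subset TL_{\infty}$ satisfies $a \bar y = \bar x$, and symmetrically one builds $b \in TL_{n'}$ with $b \bar x = \bar y$.

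Finally I would transport this back to $TL_{\infty}$: the natural inclusion $TL_{n'} \hookrightarrow TL_{\infty}$ acts on the infinite link states compatibly with the restriction map, because a generator $e_i$ with $i < n'$ only interacts with the first $n'$ points and the restriction was chosen not to cut any cup — so the action of $a, b \in TL_{n'}$ on $x, y$ (as infinite sums of infinite link states) agrees with their action on $\bar x, \bar y$ after restriction, and since restriction is injective on the relevant span we conclude $a y = x$ and $b x = y$ in $\mathcal{X}(x), \mathcal{X}(y)$.

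The main obstacle I anticipate is the bookkeeping in the reduction step: ensuring that a single $n'$ works simultaneously for all link states appearing in $x$ and $y$ (including the ones that arise after acting by $a$ or $b$), that the restriction does not need to cut a cup for any of them, and that the $TL_{n'}$-action genuinely commutes with restriction in the sense needed. Once the ``restriction to $\mathcal{V}_{n',p}$ is an injective $TL_{n'}$-equivariant map on the span of $\{x,y\}$'' claim is set up cleanly, Lemma \ref{lemma 1} does essentially all the real work, so the argument is short; the care is entirely in the infinite-to-finite translation.
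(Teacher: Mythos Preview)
Your approach is correct and is essentially the same as the paper's: restrict $x,y$ to $\mathcal{V}_{n',p}$, use Lemma~\ref{lemma 1} to find $a,b\in TL_{n'}$ mapping the restrictions to each other, and then extend $a,b$ to $TL_{\infty}$ by adjoining infinitely many identity strands on the right. The paper's proof is in fact a two-line sketch of exactly this argument, so your more careful bookkeeping (choosing $n'$ not cutting any cup, checking compatibility of the inclusion with the action) only makes the reasoning more explicit.
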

\begin{proof}
	This follows by restricting $x$ and $y$ to $\mathcal{V}_{n^{\prime},p}$ then using Lemma \ref{lemma 1}. The elements $a,b$ are given by extending the appropriate elements from Lemma $3.2$ to $TL_{\infty}$ by adding infinitely many strings to the right hand side of the diagrams. 
\end{proof}
In general, restrictions of link states are never in $\mathcal{R}_{n,p}$, with the exception when $\delta=0$ and $s(w)=0$. However in this case we can still find $TL_{\infty}$ elements that map link states to each other, as $\mathcal{R}_{n,p}$ is irreducible. From this, we get the following proposition:
\begin{prop}
Let $w$, $w^{\prime}$ be two infinite link states, and $\mathcal{X}(w,w^{\prime})$ the representation generated by them. If $w$ and $w^{\prime}$ differ at only the first $n$ points, and their restrictions are both in $\mathcal{V}_{n^{\prime},p}$ for $n^{\prime}\geq n$, then $\mathcal{X}(w,w^{\prime})\simeq\mathcal{X}(w)\simeq \mathcal{X}(w^{\prime})$. Otherwise, $\mathcal{X}(w,w^{\prime})\simeq \mathcal{X}(w)\oplus \mathcal{X}(w^{\prime})$.
\end{prop}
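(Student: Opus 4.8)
The plan is to split into the two cases according to whether the hypotheses of the first sentence hold, and in each case identify $\mathcal{X}(w,w')$ explicitly as a sum. Throughout I will use the fact that $\mathcal{X}(w,w')$ is by definition the $TL_\infty$-submodule generated by $\{w,w'\}$, so it always contains $\mathcal{X}(w)$ and $\mathcal{X}(w')$ and equals $\mathcal{X}(w)+\mathcal{X}(w')$; the only question is whether this sum is direct or whether the two summands coincide.

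For the first case, suppose $w$ and $w'$ differ only among the first $n$ points and both restrict to $\mathcal{V}_{n',p}$ for some $n'\ge n$. By the remark preceding the statement, restrictions of infinite link states lie outside $\mathcal{R}_{n',p}$ except in the degenerate situation $\delta=0$, $s(w)=0$; so first assume we are not in that degenerate situation. Then $w,w'\notin\mathcal{R}_{n',p}$, and Lemma \ref{map between link states} (with $x=w$, $y=w'$, each a single link state) produces $a,b\in TL_\infty$ with $w=aw'$ and $w'=bw$. Hence $w\in TL_\infty\cdot w'\subseteq\mathcal{X}(w')$ and symmetrically $w'\in\mathcal{X}(w)$, so $\mathcal{X}(w)=\mathcal{X}(w,w')=\mathcal{X}(w')$. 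In the degenerate case $\delta=0$, $s(w)=0$ (so $s(w')=0$ as well, since $w,w'$ differ in finitely many points and have the same number of strings after restriction), the restrictions lie in $\mathcal{R}_{n',p}=\mathcal{V}_{n',p}$, which is irreducible by the proposition quoted in Section \ref{Background}; irreducibility again gives $TL_{n'}$-elements, extendable to $TL_\infty$ by adjoining infinitely many vertical strings, carrying $w$ to $w'$ and back, so the same conclusion holds.

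For the second case, suppose the hypotheses fail. There are two ways this can happen: either $w$ and $w'$ differ at infinitely many points, or they differ at finitely many points but no common restriction $\mathcal{V}_{n',p}$ exists — i.e. after restricting past all points of difference the two restrictions land in $\mathcal{V}_{n',p}$ and $\mathcal{V}_{n',p'}$ with $p\ne p'$ (they necessarily have the same number of points $n'$ once we restrict past the last point of difference, but possibly different cup counts). In either subcase I claim $\mathcal{X}(w)\cap\mathcal{X}(w')=0$, which together with $\mathcal{X}(w,w')=\mathcal{X}(w)+\mathcal{X}(w')$ gives the direct sum. The key structural point is that any element of $\mathcal{X}(w)$ is a finite linear combination of link states each of which, for $N$ large enough, restricts to $\mathcal{V}_{N,p}$ (the action of any finite product of generators cannot decrease the cup number below that forced by the tail of $w$, and fixes the tail outside a bounded range), and likewise every element of $\mathcal{X}(w')$ restricts to $\mathcal{V}_{N,p'}$ with, for $N$ large, $p\ne p'$ — in the infinitely-differing case because the tails themselves are incompatible, forcing the direct sum at the level of tails; in the finitely-differing case because $p\ne p'$ already. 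Since link states with different cup numbers (at a common, large $N$) are linearly independent, a nonzero vector cannot lie in both, so the intersection is trivial.

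The main obstacle is the bookkeeping in the second case: making precise the statement that "every element of $\mathcal{X}(w)$ stabilizes, under restriction to length $N$, into a single block $\mathcal{V}_{N,p}$ with $p$ determined by the tail of $w$", and checking that when $w,w'$ differ at infinitely many points the tail data are genuinely incompatible at every sufficiently large $N$ (one must handle the possibility that the tails agree on cup-count but disagree on cup-placement — here I would argue that a generator acting near a point of difference either annihilates or produces a link state whose restriction still records the original tail, so the two families of restricted link states never overlap). Everything else reduces to Lemmas \ref{lemma 1} and \ref{map between link states} and the irreducibility of $\mathcal{R}_{n,p}$ already recalled above.
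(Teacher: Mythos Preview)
Your proof is correct and matches the paper's approach: the paper gives no explicit proof, simply noting that the proposition follows from Lemma~\ref{map between link states} (together with the $\delta=0$ remark invoking irreducibility of $\mathcal{R}_{n,p}$) for the first case, and earlier declaring the direct-sum conclusion ``clear'' when $w,w'$ differ at infinitely many points. Your argument supplies exactly these ingredients, with the tail-disjointness bookkeeping for the second case spelled out where the paper leaves it implicit.
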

We can define an equivalence relation on link states as follows: We say $w\sim w^{\prime}$ if they differ at only finitely many points and there is an $n\in\mathbb{N}$ such that the restrictions of $w$ and $w^{\prime}$ are both in $\mathcal{V}_{m,p}$ for all $m\geq n$. From now on, we consider link states under this equivalence.\\
To prove irreducibility of representations, we generalize the method of \cite{RSA}, and define a subrepresentation generalizing the radical for each representation. We shall see that the representations are indecomposable, and their quotient by the radical is irreducible. We define the radical $\mathcal{R}(w)\subseteq\mathcal{X}(w)$ as follows:
\begin{align*}
\mathcal{R}(w):=\{x\in\mathcal{X}(w):\text{there is } n\in\mathbb{N}\text{ such that }x\text{ restricts to }\mathcal{R}_{n^{\prime},p}\text{ for all }n^{\prime}\geq n\}
\end{align*}
It is straightforward to see that $\mathcal{R}(w)$ is a $TL_{\infty}$ representation, as $\mathcal{R}_{n^{\prime},p}$ is always irreducible. Using Lemma \ref{map between link states}, Proposition $3.3$ of \cite{RSA} generalizes to give the following:
\begin{prop}\label{quotient by radical is irreducible}
$\mathcal{X}(w)$ is cyclic and indecomposable, and $\mathcal{X}(w)/\mathcal{R}(w)$ is irreducible.
\end{prop}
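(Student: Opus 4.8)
The plan is to mirror the proof of Proposition 3.3 in \cite{RSA}, transferring statements about the finite-dimensional standard modules $\mathcal{V}_{n,p}$ to the infinite module $\mathcal{X}(w)$ via restriction, using Lemma \ref{map between link states} as the bridge. Cyclicity is immediate by construction: $\mathcal{X}(w)$ is the representation generated by $w$, so $\mathcal{X}(w)=TL_{\infty}\cdot w$. The real content is indecomposability of $\mathcal{X}(w)$ and irreducibility of the quotient $\mathcal{X}(w)/\mathcal{R}(w)$.

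For irreducibility of the quotient, I would argue as follows. Let $N\subseteq\mathcal{X}(w)$ be a submodule with $N\not\subseteq\mathcal{R}(w)$; I must show $N=\mathcal{X}(w)$. Pick $x\in N\setminus\mathcal{R}(w)$. By definition of $\mathcal{R}(w)$, there is no $n$ beyond which $x$ restricts into every $\mathcal{R}_{n',p}$; after enlarging $n$ so that $x$ is ``stable'' (its restrictions to $\mathcal{V}_{m,p}$ for $m\ge n$ are the link-state truncations in the sense of the equivalence $\sim$), we may assume $x$ restricts to some $\mathcal{V}_{n',p}$ with $x\notin\mathcal{R}_{n',p}$. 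Now take an arbitrary $y\in\mathcal{X}(w)$; after restricting further we may assume $y$ also restricts to $\mathcal{V}_{n',p}$, and $y\notin\mathcal{R}_{n',p}$ as well (restrictions of link states avoid the radical except in the $\delta=0$, $s(w)=0$ case, which must be handled separately using irreducibility of $\mathcal{R}_{n',p}$ — see below). By Lemma \ref{map between link states} (equivalently Lemma \ref{lemma 1} applied at level $n'$), there is $a\in TL_{\infty}$ with $y=ax$, so $y\in N$. Hence $N=\mathcal{X}(w)$, and $\mathcal{X}(w)/\mathcal{R}(w)$ is irreducible.

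For indecomposability, suppose $\mathcal{X}(w)=A\oplus B$. The generator $w$ decomposes as $w=a+b$ with $a\in A$, $b\in B$. At least one summand, say $a$, lies outside $\mathcal{R}(w)$ (since $w\notin\mathcal{R}(w)$, as $w$ restricts to the link-state basis vector, which is not in $\mathcal{R}_{n',p}$ in the relevant cases). By the argument just given, $TL_{\infty}\cdot a=\mathcal{X}(w)$, forcing $B=0$; hence $\mathcal{X}(w)$ is indecomposable. One can alternatively invoke $\mathrm{End}(\mathcal{X}(w))$ being local, but the direct argument is cleaner here.

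The step I expect to be the main obstacle is the bookkeeping around restrictions: one must check that ``restricting $x$ far enough'' is consistent — i.e. that $x$ really does eventually restrict to a single $\mathcal{V}_{n',p}$ with a fixed $p$, and that enlarging $n'$ does not move $x$ into the radical — and that the exceptional $\delta=0$, $s(w)=0$ situation, where restrictions \emph{do} land in $\mathcal{R}_{n',p}=\mathcal{V}_{n',p}$, still yields the transitivity of the $TL_{\infty}$-action because $\mathcal{R}_{n',p}$ is itself irreducible (Proposition stated above). This is exactly the point the remark preceding the proposition is flagging, so the proof should explicitly reduce to that remark in the exceptional case and otherwise run the generic argument above.
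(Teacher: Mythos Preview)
Your approach is exactly the one the paper intends: it states only that ``Using Lemma \ref{map between link states}, Proposition 3.3 of \cite{RSA} generalizes,'' and what you have written is precisely that generalization spelled out, including the handling of the $\delta=0$, $s(w)=0$ exception via irreducibility of $\mathcal{R}_{n',p}$.

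One small correction: in the irreducibility step you take an \emph{arbitrary} $y\in\mathcal{X}(w)$ and assert $y\notin\mathcal{R}_{n',p}$, but this can fail for $y\in\mathcal{R}(w)$. The clean fix is to take $y=w$ itself (a single link state, hence outside $\mathcal{R}_{n',p}$ except in the exceptional case); once Lemma \ref{map between link states} gives $w\in N$, cyclicity yields $N=\mathcal{X}(w)$ and the rest of your argument, including indecomposability, goes through unchanged.
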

The only case when $\mathcal{X}(w)=\mathcal{R}(w)$ is when $\delta=0$ and $s(w)=0$, otherwise we have $\mathcal{R}(w)\subsetneq\mathcal{X}(w)$. The structure of $\mathcal{R}(w)$ is given by the following: 
\begin{prop}\label{radical irreducible or zero}
	$\mathcal{R}(w)$ is either zero or irreducible.
\end{prop}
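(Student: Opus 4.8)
The plan is to mimic the strategy used for the finite radical $\mathcal{R}_{n,p}$: assuming $\mathcal{R}(w)\neq 0$, I would show that every nonzero element of $\mathcal{R}(w)$ generates all of $\mathcal{R}(w)$ as a $TL_{\infty}$-module, which is exactly irreducibility. The key input is that each finite-level radical $\mathcal{R}_{n',p}$ is irreducible over $TL_{n'}$ (the proposition ``$\mathcal{R}_{n,p}$ is irreducible''), together with the extension-by-identity-strands device already used in Lemma \ref{map between link states} and Lemma \ref{lemma 1}.

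So, fix $0\neq x\in\mathcal{R}(w)$ and an arbitrary $0\neq y\in\mathcal{R}(w)$; both are finite linear combinations of infinite link states, all equivalent to $w$, hence each differing from $w$ at only finitely many points. I would choose $n$ large enough that: (i) every link state occurring in $x$ or in $y$ agrees with $w$ at all points $>n$; (ii) both $x$ and $y$ restrict into $\mathcal{R}_{n',p}$ for every $n'\geq n$ (possible by the definition of $\mathcal{R}(w)$, taking the maximum of the two thresholds); and (iii) for a fixed such $n'$, chosen large enough not to cut any cup, the restrictions $\bar{x},\bar{y}\in\mathcal{R}_{n',p}$ are both nonzero. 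Since $\mathcal{R}_{n',p}$ is irreducible, there is $a\in TL_{n'}$ with $a\bar{x}=\bar{y}$. Extend $a$ to $\hat{a}\in TL_{\infty}$ by adjoining infinitely many vertical strands to the right, exactly as in Lemma \ref{map between link states}. Because all link states involved agree with $w$ beyond position $n'$, the action of $\hat{a}$ on $x$ inside $\mathcal{X}(w)$ is computed locally on the first $n'$ points and therefore matches the $TL_{n'}$-action of $a$ on $\bar{x}$; hence $\hat{a}x=y$. As $y$ was arbitrary, $TL_{\infty}x=\mathcal{R}(w)$, so $\mathcal{R}(w)$ is irreducible.

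The main obstacle is making precise the passage in step (iii), namely that restriction to length $n'$ intertwines, on the finite-dimensional subspace spanned by the link states occurring in $x$ and in $\hat{a}x$, the $TL_{\infty}$-action of $\hat{a}$ with the $TL_{n'}$-action of $a$ — in particular that the ``acts as zero whenever it would create an extra cup'' convention is respected under restriction, and that one may always enlarge $n'$ slightly to avoid cutting cups without affecting the conclusion. One also needs that restriction is injective on the relevant subspace, which holds once $n'$ is large since distinct link states agreeing beyond $n'$ restrict to distinct link states. An alternative, more structural route would be to construct an infinite analogue of the intertwiner $\phi_{n,p}$ of Definition \ref{map def}, identifying $\mathcal{R}(w)$ with $\mathcal{X}(v)/\mathcal{R}(v)$ for the ``symmetric partner'' link state $v$ with more cups, and then invoke Proposition \ref{quotient by radical is irreducible}; but controlling the quantum-integer coefficients of $\phi$ in the infinite setting looks more delicate than the direct argument, so I would pursue the direct one.
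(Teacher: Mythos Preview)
Your proposal is correct and follows essentially the same approach as the paper: pick $x,y\in\mathcal{R}(w)$, restrict to a sufficiently large $n'$ so that both land in the finite radical $\mathcal{R}_{n',p}$, use irreducibility of $\mathcal{R}_{n',p}$ to find a $TL_{n'}$ element sending one restriction to the other, and extend by identity strands to a $TL_{\infty}$ element. The paper's version is terser and does not spell out the compatibility of restriction with the action or the injectivity of restriction that you flag as the main obstacle, but the argument is the same.
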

\begin{proof}
	Given $x,y\in\mathcal{R}(w)$, then for $n$ large enough, $x$ and $y$ both restrict to $\mathcal{R}_{n,p}$. By Theorem $7.2$ of \cite{RSA}, $\mathcal{R}_{n,p}$ is either zero or irreducible. If it is zero for $n^{\prime}\geq n$, then $x$ and $y$ must be zero, and so $\mathcal{R}(w)$ is zero. If $\mathcal{R}_{n,p}$ is non-zero for all $n$, then as it is irreducible, we can find $TL_{n}$ elements that map the restrictions of $x$ and $y$ to each other. These extend to $TL_{\infty}$ elements that map $x$ and $y$ to each other, hence every $\mathcal{R}(w)$ element generates the representation, and so $\mathcal{R}(w)$ is irreducible.
\end{proof}
Combined with Proposition \ref{quotient by radical is irreducible}, it follows that every $\mathcal{X}(w)$ has at most two irreducible representations in its composition series.
We now want to determine when these representations are irreducible. For that, we will first need the following lemma:
\begin{lemma}\label{radical preserved}
	Let $x\in\mathcal{R}_{n,p}$, and $y\in\mathcal{V}_{2k,k}$, so that $y$ only consists of cups. Then $x\otimes y\in\mathcal{R}_{n+2k,p+k}$, where $x\otimes y$ is formed by joining $y$ onto the right hand side of $x$.
\end{lemma}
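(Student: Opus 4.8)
The plan is to reduce the statement to the characterization of the radical $\mathcal{R}_{n,p}$ in terms of the explicit map $\phi_{n,p}$ from Definition \ref{map def}, together with the remark that $\phi_{n,p}$ is obtained from $\phi_{n-2p,0}$ by joining its image onto the bottom of link states. Recall from the excerpt that for a symmetric pair $(n,p),(n,p^{\prime})$ one has $\ker(\phi_{n,p})=\mathcal{R}_{n,p}$, so $x\in\mathcal{R}_{n,p}$ precisely when $\phi_{n,p}(x)=0$; when no symmetric partner $p^{\prime}>p$ exists, $\mathcal{R}_{n,p}=0$ and the claim is vacuous. So I would first dispose of the trivial cases: if $\mathcal{R}_{n,p}=0$ then $x=0$ and $x\otimes y=0\in\mathcal{R}_{n+2k,p+k}$; and separately handle $\delta=0$, $n=2p$, where $\mathcal{R}_{2p,p}=\mathcal{V}_{2p,p}$, so $x\otimes y$ lands in $\mathcal{V}_{2p+2k,p+k}=\mathcal{R}_{2p+2k,p+k}$ — here one must check $2p+2k=2(p+k)$ is still the $\delta=0$ degenerate case, which it is.

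Next, for the main case I would set up the relationship between $\phi_{n,p}$ and $\phi_{n+2k,p+k}$. Since attaching $k$ cups on the right to an $(n,p)$-link state produces an $(n+2k,p+k)$-link state, and the standard representation $\mathcal{V}_{n,p}$ restricts compatibly into $\mathcal{V}_{n+2k,p+k}$ via this operation $x\mapsto x\otimes y$ (which is $TL_{n}$-equivariant for the subalgebra generated by $e_{1},\dots,e_{n-1}$), the key is to show that $\phi_{n+2k,p+k}$ restricted to the image of this embedding agrees, up to the obvious bookkeeping, with $\phi_{n,p}$. Concretely, both $\phi_{n,p}$ and $\phi_{n+2k,p+k}$ factor through $\phi_{n-2p,0}$ and $\phi_{n-2p,0}$ respectively (the through-line counts $n-2p$ and $(n+2k)-2(p+k)=n-2p$ are \emph{equal}), so the two maps are built from the \emph{same} base map $\phi_{n-2p,0}$. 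Thus $\phi_{n+2k,p+k}(x\otimes y)$ is computed by applying $\phi_{n-2p,0}$ to the through-strings of $x\otimes y$, which are exactly the through-strings of $x$ with the cups of $x$ and the $k$ new cups of $y$ left untouched; hence $\phi_{n+2k,p+k}(x\otimes y)=\phi_{n,p}(x)\otimes y$ (again with $y$ just riding along on the right). Therefore $\phi_{n,p}(x)=0$ forces $\phi_{n+2k,p+k}(x\otimes y)=0$, i.e. $x\otimes y\in\ker(\phi_{n+2k,p+k})=\mathcal{R}_{n+2k,p+k}$.

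The main obstacle I anticipate is verifying cleanly that $(n+2k,p+k)$ is again a symmetric pair with a partner above it exactly when $(n,p)$ is — equivalently that $k(n,p)$ and $r(n,p)$ are unchanged when $(n,p)\mapsto(n+2k,p+k)$, which holds because $n-2p+1$ is invariant under this shift; once that is in hand, the criticality/symmetry data transports verbatim. A secondary technical point is the coefficient bookkeeping in Definition \ref{map def}: one must check that appending $k$ cups on the far right does not alter any of the region-counts $n(h)$ for lines $h$ already present in a link state $y_i$, nor the count $e(y_i)$ of even quantum integers, since the new cups all sit in the region containing the right-hand edge and contribute regions of size $1$ (with $[1]=1$, an odd quantum integer). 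This makes the identification $\phi_{n+2k,p+k}(x\otimes y)=\phi_{n,p}(x)\otimes y$ literal on coefficients, not merely up to scalar. If one prefers to avoid the explicit formula entirely, an alternative is to use $\mathrm{Hom}(\mathcal{V}_{n-2p,0},\mathcal{V}_{n-2p,p^{\prime}-p})\simeq\mathbb{C}$ and uniqueness up to scalar to match the two constructions abstractly, then fix the scalar by evaluating on the all-through-strings link state; I would mention this as a remark but carry out the direct combinatorial check as the primary argument.
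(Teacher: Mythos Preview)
Your proposal is correct and rests on the same core observation as the paper: both maps $\phi_{n,p}$ and $\phi_{n+2k,p+k}$ are built from the \emph{same} base map $\phi_{n-2p,0}$ (since $n-2p=(n+2k)-2(p+k)$) applied only to the through-strings, so $\phi$ commutes with the operation $-\otimes y$ of appending cups on the right. The one notable difference is which characterization of the radical you invoke: the paper uses $\mathcal{R}_{n,p}=\mathrm{Im}(\phi_{n,p'})$ for the symmetric partner $p'<p$ and shows $\mathrm{Im}(z)\otimes y=\mathrm{Im}(z\otimes y)$, whereas you use the dual description $\mathcal{R}_{n,p}=\ker(\phi_{n,p})$ for the partner $p'>p$ and show $\phi_{n+2k,p+k}(x\otimes y)=\phi_{n,p}(x)\otimes y$. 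The image approach has the mild advantage of covering all nontrivial cases uniformly, while your kernel approach forces a separate treatment of $\delta=0$, $n=2p$ (where no outgoing $\phi$ exists), which you handle correctly. Your additional remarks on the invariance of the region-counts $n(h)$ under appending cups at the far right are more detailed than what the paper spells out, but consistent with it.
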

\begin{proof}
	As $\mathcal{R}_{n,p}\neq 0$, we know that $(n,p)$ is not critical. Then $(n+2k,p+k)$ is also non-critical, and so $\mathcal{R}_{n+2k,p+k}\neq 0$. Let $p^{\prime}<p$ be such that $(n,p^{\prime})$, $(n,p)$ form a symmetric pair, so that $Hom(\mathcal{V}_{n,p^{\prime}},\mathcal{V}_{n,p})\simeq\mathbb{C}$. The image of this map can be constructed explicitly as given in Definition \ref{map def}. Given a link state $z\in\mathcal{V}_{n,p^{\prime}}$, its image in $\mathcal{V}_{n,p}$ is the sum over all link states formed by joining together $2(p-p^{\prime})$ strings in $z$. The image of this, $Im(z)$, is then in $\mathcal{R}_{n,p}$. We now note that $Hom(\mathcal{V}_{n+2k,p^{\prime}+k},\mathcal{V}_{n+2k,p+k})\simeq\mathbb{C}$, as $(n+2k,p^{\prime}+k),(n+2k,p+k)$ is a symmetric pair if $(n,p^{\prime}),(n,p)$ is. Given $z\otimes y\in\mathcal{V}_{n+2k,p^{\prime}+k}$, we again construct its image, but this only depends on joining together strings, so $Im(z\otimes y)=Im(z)\otimes y$, and hence $Im(z)\otimes y\in\mathcal{R}_{n+2k,p+k}$. It then follows that this is true for any $x\in\mathcal{R}_{n,p}$. 	
\end{proof}
Note that if instead $y$ contains $j$ strings as well as cups in the above lemma, then it no longer holds, as $x\otimes y\in\mathcal{V}_{n+2k+j,p+k}$ which can be critical, and so we can't guarantee that $\mathcal{R}_{n+2k+j,p+k}\neq 0$. Given this, we can now classify which of the representations are irreducible.
\begin{thm}\label{Irreducibility of representations}
For $q$ generic, all representations $\mathcal{X}(w)$ generated by an infinite link state are irreducible. For $q$ a root of unity, with $l$ the smallest positive integer such that $q^{2l}=1$, if $s(w)=\infty$, $s(w)=kl-1$ for $k\in\mathbb{N}$, or $s(w)=0$ and $l=2$, then $\mathcal{X}(w)$ is irreducible. Otherwise it is indecomposable but not irreducible.
\end{thm}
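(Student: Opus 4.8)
The plan is to reduce everything to the $TL_n$ theory via restrictions, using the radical $\mathcal{R}(w)$ defined above together with Propositions \ref{quotient by radical is irreducible} and \ref{radical irreducible or zero}. Recall that $\mathcal{X}(w)$ is irreducible precisely when $\mathcal{R}(w)=0$, and that $x\in\mathcal{R}(w)$ iff its restrictions lie in $\mathcal{R}_{n',p}$ for all large $n'$. So the whole theorem amounts to deciding, for the sequence of pairs $(n',p)$ arising as restrictions of $w$, whether $\mathcal{R}_{n',p}$ is eventually zero or eventually nonzero. The key observation is that restricting $w$ to length $n'$ produces a link state with the same cup count $p$ (cups are never cut) and with $n'-2p$ equal to the number of strings among the first however-many points — so as $n'\to\infty$, the quantity $n'-2p$ tracks $s(w)$ (it stabilizes at $s(w)$ if $s(w)<\infty$, and grows without bound if $s(w)=\infty$).

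First, for $q$ generic I would invoke the third bullet of Theorem (the Gram determinant theorem): no $l$ exists, so $\mathcal{R}_{n',p}=0$ for every $n',p$, hence $\mathcal{R}(w)=0$ and $\mathcal{X}(w)$ is irreducible. (The $l=1$ and $l=2$-with-$n$-odd cases fold in here too.) Next, for $q$ a root of unity with $s(w)=\infty$: I would restrict to length $n'$ with $n'$ chosen so that $(n',p)$ is \emph{not} critical and $0\le n'-2p+1<l$ fails — but actually the cleanest route is to note $n'-2p\to\infty$, so for infinitely many $n'$ we land in the ``$\mathcal{R}_{n',p}=0$'' regimes; more carefully, I want $\mathcal{R}_{n',p}=0$ for \emph{all} large $n'$, which requires checking that once $n'-2p+1\ge l$ we can always steer into a critical or sub-$l$ case. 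This is the point that needs care: one shows that for $p$ fixed and $n'$ ranging over the allowed restriction values, $n'-2p+1$ takes all sufficiently large values in some residue pattern, and the Gram theorem gives $\mathcal{R}_{n',p}=0$ whenever $n'-2p+1=kl$; between consecutive multiples of $l$ the radical can be nonzero, so I actually need the restriction values of $n'$ to eventually hit the critical congruence — which they needn't. The resolution: use that $\mathcal{R}(w)$ consists of elements restricting to $\mathcal{R}_{n',p}$ for \emph{all} $n'\ge n$, and by Lemma \ref{radical preserved} an element of $\mathcal{R}_{n_0,p}$ tensored with cups stays in the radical, but tensoring with a \emph{string} (which is what increasing $n'$ past a string position does) does not — so an element of $\mathcal{R}(w)$ would have to restrict into $\mathcal{R}_{n',p}$ across infinitely many ``string additions,'' and since adding a string changes $n'-2p+1$ by $1$, it eventually produces a critical $(n',p)$ where $\mathcal{R}_{n',p}=0$, forcing the element to have been zero already. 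Hence $\mathcal{R}(w)=0$.

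For the finite cases: if $s(w)=kl-1$, then eventually $n'-2p=kl-1$, i.e. $n'-2p+1=kl$, so every sufficiently large restriction is critical and $\mathcal{R}_{n',p}=0$ by the second bullet of the Gram theorem; thus $\mathcal{R}(w)=0$ and $\mathcal{X}(w)$ is irreducible. If $s(w)=0$ and $l=2$ ($\delta=0$), the relevant restrictions have $n'=2p$ and the Gram form vanishes identically, so $\mathcal{R}_{2p,p}=\mathcal{V}_{2p,p}$; here $\mathcal{R}(w)=\mathcal{X}(w)$, which by Proposition \ref{radical irreducible or zero}'s proof is still irreducible (every nonzero element generates). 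In all remaining cases, write $s(w)=ml+r$ with $1\le r\le l-1$, $r\ne l-1$ excluded only if... — more simply, $s(w)$ is finite, $s(w)\not\equiv -1\pmod l$, and we are not in the $\delta=0$, $s(w)=0$ exception; then for all large $n'$ we have $n'-2p=s(w)$ with $n'-2p+1\not\equiv 0\pmod l$ and $n'-2p+1\ge l$, so by the last bullet of the Gram theorem $\mathcal{R}_{n',p}\ne 0,\mathcal{V}_{n',p}$. One then uses Lemma \ref{radical preserved} to see these nonzero radicals are compatible under restriction (the radical element at level $n'$, being of the form $x\otimes(\text{cups})$, pushes forward), so $\mathcal{R}(w)\ne 0$; combined with Proposition \ref{quotient by radical is irreducible} this makes $\mathcal{X}(w)$ indecomposable but not irreducible.

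The main obstacle is the $s(w)=\infty$ case: unlike the finite-string cases where $n'-2p$ stabilizes and one directly reads off the Gram theorem, here $n'-2p+1$ marches through all large integers and is nonzero mod $l$ for most of them, so naively $\mathcal{R}_{n',p}\ne 0$ for those $n'$ and one might fear $\mathcal{R}(w)\ne 0$. The point I must nail down is that membership in $\mathcal{R}(w)$ demands simultaneous membership in $\mathcal{R}_{n',p}$ for \emph{all} large $n'$, and the transition maps between these radicals (induced by lengthening the restriction across a string) are not surjective — Lemma \ref{radical preserved} shows cups preserve the radical but strings need not — so the intersection collapses to zero exactly when infinitely many string-additions occur, i.e. when $s(w)=\infty$. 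Making this compatibility-of-restrictions argument precise, rather than hand-wavy, is where the real work lies.
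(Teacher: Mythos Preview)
Your approach is the paper's: split on $s(w)$, show $\mathcal{R}(w)=0$ in the irreducible cases by producing critical restrictions, and in the remaining cases exhibit a nonzero $\mathcal{R}(w)\subsetneq\mathcal{X}(w)$ via Lemma~\ref{radical preserved} together with Propositions~\ref{quotient by radical is irreducible} and~\ref{radical irreducible or zero}. Your $s(w)=\infty$ argument, after the detours, is precisely the paper's two-line proof: since infinitely many strings are added as $n'$ grows, $n'-2p'+1$ passes through every large integer and in particular through multiples of $l$, where $\mathcal{R}_{n',p'}=0$; this kills any putative nonzero $x\in\mathcal{R}(w)$.

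There is one real slip in your ``otherwise'' paragraph. You assert that for all large $n'$ one has $n'-2p+1\ge l$, so as to land in the last bullet of the Gram theorem and conclude $\mathcal{R}_{n',p}\ne 0$. But once all strings of $w$ are visible, $n'-2p+1$ stabilises at $s(w)+1$; whenever $0\le s(w)\le l-2$ (e.g.\ $s(w)=0$ and $l=3$) your inequality is simply false. Beware also that the fourth bullet of the Gram theorem as recorded in Section~\ref{Background} cannot be taken literally (for instance $\mathcal{R}_{4,2}\ne 0$ when $l=3$, though $n-2p+1=1<l$), so you cannot lean on it in either direction. What is actually true, and what both you and the paper need, is that for $j:=s(w)$ with $j+1\not\equiv 0\pmod l$ one has $\mathcal{R}_{n',\,(n'-j)/2}\ne 0$ for all \emph{sufficiently large} $n'$: once $p=\tfrac{n'-j}{2}$ is big enough, reflecting $(n',p)$ across the nearest critical line to its right lands at some $(n',p')$ with $0\le p'<p$ inside the Bratteli diagram, so $\mathcal{R}_{n',p}=\operatorname{Im}\phi_{n',p'}\ne 0$. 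With that correction your Lemma~\ref{radical preserved} step produces a nonzero element of $\mathcal{R}(w)$ exactly as in the paper; you should also record, as the paper does, that the generating link state $w$ itself lies outside $\mathcal{R}(w)$, giving the strict inclusion $\mathcal{R}(w)\subsetneq\mathcal{X}(w)$.
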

\begin{proof}
For $q$ generic, $\mathcal{R}_{n,p}=0$ for all $n$ and $p$. Hence $\mathcal{R}(w)=0$, and so $\mathcal{X}(w)$ is irreducible.
 
For $q$ a root of unity, $\mathcal{R}_{n,p}$ may now be non-zero. We first look at the case $s(w)=\infty$. Assume there is some $x\in\mathcal{R}(w)$, $x\neq 0$. Then $x$ must restrict to $\mathcal{R}_{n,p}$ for $n$ large. However, as $s(w)=\infty$, we can always find an $n^{\prime}>n$ such that $x$ restricts to $\mathcal{V}_{n^{\prime},p^{\prime}}$ with $(n^{\prime},p^{\prime})$ critical. Hence $x=0$, and therefore $\mathcal{R}(w)=0$.

For $s(w)=kl-1$, any $x\in\mathcal{X}(w)$ will restrict to $\mathcal{V}_{n,\frac{n-kl+1}{2}}$ for $n$ large enough, which is critical and so has trivial radical. 

For $l=2$ and $s(w)=0$, $\mathcal{X}(w)=\mathcal{R}(w)$, which we have already shown is irreducible.

For the remaining representations, by Proposition \ref{quotient by radical is irreducible} we know the representations are indecomposable. For a given link state $w$ with $s(w)=j$, given $x\in\mathcal{X}(w)$ that restricts to $\mathcal{R}_{n,\frac{n-j}{2}}$ for $n$ large enough, by Lemma \ref{radical preserved}, it will restrict to $\mathcal{R}_{n+2k,\frac{n-j}{2}+k}$ for $k\in\mathbb{N}$. Hence $\mathcal{R}(w)$ is non-empty. However, not every element of $\mathcal{X}(w)$ is in $\mathcal{R}(w)$, namely $w\notin\mathcal{R}(w)$. Therefore the $\mathcal{X}(w)$ are not irreducible.
\end{proof}
Now that we have determined when these representations are irreducible, we want to find out if they are pairwise isomorphic. To do this, we look at when there are non-zero morphisms between representations. To start, we first need the following definition:
\begin{defin}
Given two infinite link states, $w,z$, that differ at only finitely many points, with $s(w)$ and $s(z)$ both finite, $s(w)<s(z)$, we say $w$ and $z$ form a symmetric pair if $(s(z),0),(s(z),\frac{s(z)-s(w)}{2})$ form a symmetric pair.
\end{defin}
\begin{prop}\label{maps 2}
Given two infinite link states, $w,z$, if $w$ and $z$ are equivalent link states, or $w$ and $z$ form a symmetric pair with $s(z)<s(w)$, then $Hom(\mathcal{X}(w),\mathcal{X}(z))\simeq\mathbb{C}$. Otherwise, $Hom(\mathcal{X}(w),\mathcal{X}(z))=0$.
\end{prop}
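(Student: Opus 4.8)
The plan is to read off each homomorphism $\phi\colon\mathcal{X}(w)\to\mathcal{X}(z)$ from its value $\phi(w)$ --- which determines $\phi$, since $\mathcal{X}(w)$ is cyclic on $w$ by Proposition \ref{quotient by radical is irreducible} --- by restricting to the subalgebras $TL_n\subset TL_\infty$ and feeding into the $TL_n$ computation of Proposition \ref{maps}. The basic bookkeeping step is this: $\phi(w)$ is a finite sum of link states equivalent to $z$, so for $n$ a ``clean'' level (restriction cuts no cup) larger than all the finitely many points at which these link states, $w$, and $z$ differ from one another, the cyclic $TL_n$-submodule $TL_n\cdot w$ is isomorphic to the standard module $\mathcal{V}_{n,p_w}$ with $n-2p_w+1=s(w)+1$, and $\phi$ maps it into a \emph{single} copy of $\mathcal{V}_{n,p_z}$ in $\mathcal{X}(z)$; thus $\phi$ restricts to a $TL_n$-morphism $\mathcal{V}_{n,p_w}\to\mathcal{V}_{n,p_z}$. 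One also checks that ``$w,z$ form a symmetric pair'' in the sense of the preceding definition is equivalent to ``$(n,p_w),(n,p_z)$ is a symmetric pair'' at every such $n$, this condition being independent of $n$ because $n-2p_w+1$ and $n-2p_z+1$ are.

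If $w\sim z$ then $\mathcal{X}(w)\simeq\mathcal{X}(z)$ by the proposition on modules generated by two link states, so it suffices to see $\mathrm{End}(\mathcal{X}(w))\simeq\mathbb{C}$: by the first paragraph an endomorphism restricts, for each large clean $n$, to an element of $\mathrm{End}_{TL_n}(\mathcal{V}_{n,p_w})\simeq\mathbb{C}$ and hence acts on $w$ by a scalar $c_n$; the $c_n$ must coincide, so $\phi=c\cdot\mathrm{id}$. If instead $w,z$ form a symmetric pair with $s(z)<s(w)$, I construct a nonzero map $\Phi$ by sending a link state $u\sim w$ to the signed sum, with the coefficients of Definition \ref{map def}, over all link states obtained by joining $\tfrac12(s(w)-s(z))$ pairs of its (finitely many) strings: this is the extension to infinite link states of the maps $\phi_{n,p_w}$, it is well defined since joining strings is local when $s(w)<\infty$ and so commutes with restriction, it is $TL_\infty$-equivariant because each $\phi_{n,p_w}$ is a $TL_n$-morphism, and it is nonzero since $\Phi(w)$ restricts to $\phi_{n,p_w}(w|_n)\neq0$ (the restriction of $w$ is not in $\mathcal{R}_{n,p_w}$ as $s(w)>0$). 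Uniqueness up to scalar then follows from the first paragraph and $\mathrm{Hom}_{TL_n}(\mathcal{V}_{n,p_w},\mathcal{V}_{n,p_z})\simeq\mathbb{C}$: any $\psi$ restricts to a multiple $c_n\phi_{n,p_w}$ of $\Phi$, the $c_n$ agree, and $\psi=c\,\Phi$.

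It remains to prove $\mathrm{Hom}(\mathcal{X}(w),\mathcal{X}(z))=0$ in all other cases. If $w$ and $z$ differ at only finitely many points but are not equivalent, then $s(w)\neq s(z)$ (equal string numbers force equivalence, since past the differing points $w$ and $z$ have the same clean levels and restrict to the same $p$), so at large common clean levels $s(w)-s(z)$ is a fixed nonzero integer and $(n,p_w),(n,p_z)$ is not a symmetric pair with $p_w<p_z$ for all but one residue class of $n$; hence $\mathrm{Hom}_{TL_n}(\mathcal{V}_{n,p_w},\mathcal{V}_{n,p_z})=0$ there and $\phi(w)=0$. If $w$ and $z$ differ at infinitely many points I argue directly: whenever there is an arbitrarily large position $i$ at which $w$ has a simple cup and $z$ does not, the generator $e_i$ acts on $w$ by $\delta$ but carries each component of $\phi(w)$ to a distinct new link state having a simple cup at $i$, forcing $\phi(w)=0$ by linear independence; and in the one remaining configuration --- where the simple cups of the tail of $w$ all occur among those of the tail of $z$ --- one instead uses a position $i$ where $z$, not $w$, has a simple cup to produce a nonzero element of $\ker\phi$, so that $\mathrm{im}\,\phi$ is an irreducible submodule of $\mathcal{X}(z)$ isomorphic to $\mathcal{X}(w)/\mathcal{R}(w)$, and concludes by the fact --- generalizing $\mathcal{R}_{n,p}\simeq\mathcal{L}_{n,p_-}$ together with Proposition \ref{maps}, and identifying $\mathcal{R}(z)$ with $\mathcal{X}(z^{\vee})/\mathcal{R}(z^{\vee})$ for the symmetric partner $z^{\vee}$ --- that $\mathcal{X}(w)/\mathcal{R}(w)\simeq\mathcal{X}(u)/\mathcal{R}(u)$ only when $u\sim w$ (here $u=z^{\vee}$ differs from $z$ at only finitely many points, contradicting that $w$ and $z$ differ at infinitely many).

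The fiddly but routine points are in the first paragraph: reconciling the restriction lengths of $w$ and $z$ (common clean levels need not exist when they differ at infinitely many points, which is exactly why the direct argument is needed there) and verifying that $\phi$ lands in a single standard copy. The genuine obstacle is the last configuration of the previous paragraph: one must show that the isomorphism type of the irreducible module $\mathcal{X}(w)/\mathcal{R}(w)$ remembers the entire equivalence class of $w$, not merely $s(w)$, even for link states differing at infinitely many points --- which I expect to settle either by analysing the composition factors of $\mathcal{X}(w)/\mathcal{R}(w)$ restricted to each $TL_n$, or by exhibiting, for every $u\not\sim w$, an element of $TL_\infty$ that tells $w$ and $u$ apart inside these modules.
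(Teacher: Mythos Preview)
Your handling of the finitely-differing case --- restrict to a large clean level $n$ and invoke Proposition~\ref{maps} --- is the paper's argument. (One small correction: once $w$ and $z$ agree past some point, whether $(n,p_w),(n,p_z)$ is a symmetric pair depends only on $s(w),s(z)$ and $l$, not on the residue of $n$, so either every large clean $n$ works or none does.)

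The divergence, and the gap you yourself flag, is in the infinitely-differing case. Your ``remaining configuration'' argument appeals to the isomorphism classification of the irreducible quotients $\mathcal{L}(\cdot)$: that is Theorem~\ref{Irr iso}, which is established \emph{after} the present proposition and explicitly lists Proposition~\ref{maps 2} among its ingredients (the paper writes ``By the proof of Proposition~\ref{maps 2} we find that the irreducibles are not isomorphic for $s(w)>l-1$''), so the appeal is circular. The statement is also false without qualification at $l=3$ (Corollary~\ref{one dim}), so it would need separate treatment there even if the circularity were broken.

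The paper avoids all of this by staying diagrammatic throughout: it locates a position $i$ at which exactly one of $w,f(w)$ carries a simple cup and applies $(e_i-\delta\mathbf{1})$ to force $f(w)=0$ when $\delta\neq0$; for $\delta=0$ it uses $e_i$ directly, and in the one residual subcase (where one of $w,z$ is obtained from the other by repeatedly replacing a simple cup by two strings) it exhibits an explicit short word $e_{i+1}e_{j-1}$, acting across two adjacent replaced blocks, that annihilates one side but returns a nonzero link state on the other. This is precisely your second proposed fix --- a distinguishing element of $TL_\infty$ --- and it is much lighter than analysing restricted composition factors.
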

\begin{proof}
We consider two cases, when $w$ and $z$ differ at finitely many points, and when they differ at infinitely many points. When $w$ and $z$ only differ at finitely many points, assume there is some map $\phi_{w,z}:\mathcal{X}(w)\rightarrow\mathcal{X}(z)$. We can restrict this to some $n$ large enough to get a map $\phi_{n,p}:\mathcal{V}_{n,p}\rightarrow\mathcal{V}_{n,p^{\prime}}$. By Proposition \ref{maps}, either $\phi_{n,p}$ is the identity and $p=p^{\prime}$, or $\phi_{n,p}$ is as in Definition \ref{map def} and $(n,p),(n,p^{\prime})$ is a symmetric pair. By considering all possible restrictions to $n^{\prime}>n$, we see that either $\phi_{w,z}$ is the identity with $w$ and $z$ equivalent, or $(n^{\prime},p+\frac{n^{\prime}-n}{2}),(n^{\prime},p^{\prime}+\frac{n^{\prime}-n}{2})$ are symmetric pairs for all $n^{\prime}>n$. As the construction of each of the $\phi_{n^{\prime},p}$ depend only on the number of strings, we can take $p=0$ to get $(s(z),0),(s(z),\frac{s(z)-s(w)}{2})$ is a symmetric pair.

If $w$ and $z$ differ at infinitely many points, assume there is some map $f:\mathcal{X}(w)\rightarrow\mathcal{X}(z)$, $f:w\mapsto\sum a_{i}z_{i}$, $a_{i}\in\mathbb{C}$, $z_{i}\in\mathcal{X}(z)$. As $w$ and $z$ differ at infinitely many points, we can find some point $i$ such that there is a simple cup at the $i$th position on one of $w$ or $f(w)$ but not the other. Considering $(e_{i}-\delta 1)f(w)$, we see that $f$ can't exist for $\delta\neq 0$. If $\delta=0$, we can instead consider $e_{i}f(w)$, unless one of $w$ or $z$ is constructed from the other by replacing multiple simple cups with two strings. In this case, let $i,i+1,j,j+1$ be the locations of two adjacent replacements. We further assume that a cup connects the $(i+2)$th and $(j-1)$th points. Acting $e_{i+1}e_{j-1}$ on $w$ and $f(w)$ we then get zero for whichever has strings at $i,i+1$ and $j,j+1$, but for whichever has cups we get:
\begin{figure}[H]
	\centering
	\includegraphics[width=0.7\linewidth]{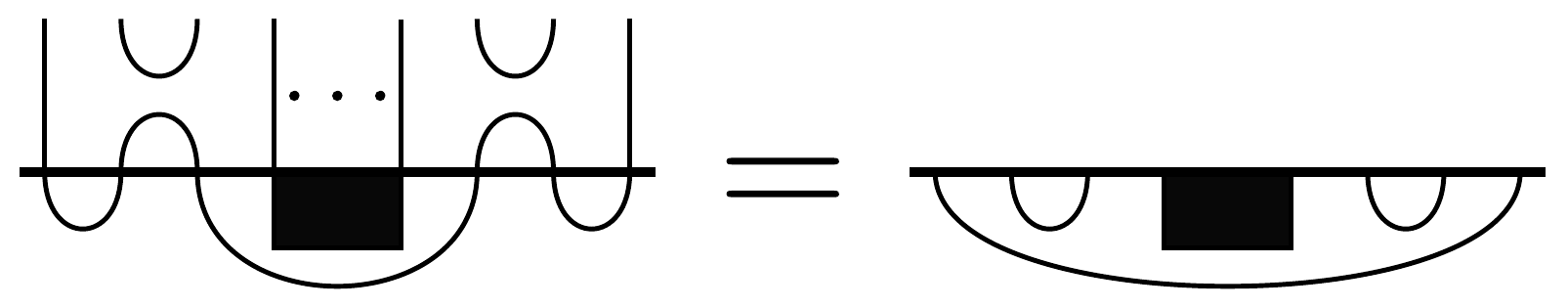}
\end{figure}
Hence $f$ must equal zero.
\end{proof}
\begin{corr}\label{iso}
$\mathcal{X}(w)\simeq\mathcal{X}(z)$ if and only if $w$ and $z$ are equivalent link states.
\end{corr}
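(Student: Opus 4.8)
The plan is to obtain this as an immediate consequence of Proposition \ref{maps 2}, which already computes every Hom-space between representations generated by infinite link states, together with the earlier proposition characterizing $\mathcal{X}(w,w^\prime)$ (the one stated just before the equivalence relation $\sim$ is introduced).

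For the converse direction, suppose $w$ and $z$ are equivalent link states. By definition of $\sim$, they differ at only finitely many points, say within the first $n$ points, and there is an $N$ such that the restrictions of both lie in $\mathcal{V}_{m,p}$ for every $m\geq N$. This is exactly the hypothesis of the proposition on $\mathcal{X}(w,w^\prime)$, which then yields $\mathcal{X}(w)\simeq\mathcal{X}(w,z)\simeq\mathcal{X}(z)$. So this direction is immediate.

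For the forward direction, suppose $\mathcal{X}(w)\simeq\mathcal{X}(z)$. Then both $Hom(\mathcal{X}(w),\mathcal{X}(z))$ and $Hom(\mathcal{X}(z),\mathcal{X}(w))$ are nonzero, since they contain an isomorphism and its inverse. Applying Proposition \ref{maps 2} to each: nonvanishing of the first forces either $w\sim z$, or that $w$ and $z$ form a symmetric pair with $s(w),s(z)$ finite and $s(z)<s(w)$; nonvanishing of the second forces either $w\sim z$, or that they form a symmetric pair with $s(w)<s(z)$. If $w$ and $z$ are not equivalent, we would simultaneously need $s(z)<s(w)$ and $s(w)<s(z)$, which is impossible. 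Hence $w\sim z$, as desired. (The case where one of $s(w),s(z)$ is infinite, or where $w$ and $z$ differ at infinitely many points, is not a separate worry: Proposition \ref{maps 2} already forces the Hom-space to vanish there unless $w\sim z$, which for differing at infinitely many points never happens, so those cases cannot produce an isomorphism either.)

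There is essentially no genuine obstacle here: all the real content has been packaged into Proposition \ref{maps 2}, and the only thing to be careful about is to invoke the Hom-vanishing in \emph{both} directions and to note that the "symmetric pair" alternative is incompatible with itself under swapping $w$ and $z$ because it is governed by the strict inequality of string numbers. One could alternatively argue that the symmetric-pair maps factor through a proper radical and hence are never invertible, but the string-count contradiction is the cleanest route and avoids any further analysis of those maps.
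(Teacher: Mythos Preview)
Your proof is correct and is precisely the intended argument: the paper states this as a corollary of Proposition~\ref{maps 2} without further elaboration, and what you have written is exactly the natural two-line unpacking of that corollary, using the earlier proposition on $\mathcal{X}(w,w^{\prime})$ for the ``if'' direction and the incompatibility of the strict inequalities $s(z)<s(w)$ and $s(w)<s(z)$ for the ``only if'' direction.
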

The map $\phi_{w,z}:\mathcal{X}(w)\rightarrow\mathcal{X}(z)$ described in Proposition \ref{maps 2} for when $w$ and $z$ are a symmetric pair is an immediate generalization of the map defined in Definition \ref{map def}. As previously noted, Proposition \ref{quotient by radical is irreducible} and Proposition \ref{radical irreducible or zero} combined give that when $\mathcal{X}(w)$ is not irreducible, it has exactly two irreducible components in its composition series, namely $\mathcal{R}(w)$ and $\mathcal{L}(w):=\mathcal{X}(w)/\mathcal{R}(w)$. We now want to determine when these irreducible components are isomorphic. Given an infinite link state $w$ with $s(w)=j_{0}\in\mathbb{N}$, let $w^{(j)}$ be an infinite link state that differs at finitely many points from $w$, and $s(w^{(j)})=j_{0}+j$, i.e. the link state formed by cutting $\frac{j}{2}$ cups on $w$. For $0\leq j_{0}\leq l-2$, we then have the following sequence:
\begin{align*}
...\xrightarrow{\phi_{3}}\mathcal{X}(w^{(j_{2})})\xrightarrow{\phi_{2}}\mathcal{X}(w^{(j_{1})})\xrightarrow{\phi_{1}}\mathcal{X}(w)
\end{align*} 
with $j_{k}:=2(kl-1)-j_{k-1}$, and $\phi_{i}\in Hom(\mathcal{X}(w^{(j_{i})}),\mathcal{X}(w^{(j_{(i-1)})}))$. By considering restricting terms in this sequence to finite $n$, we get the following: 
\begin{prop}\label{exact}
	The above sequence is exact, with $Im(\phi_{k})\simeq\mathcal{R}(w^{(j_{k-1})})$.	
\end{prop}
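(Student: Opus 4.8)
My plan is to push everything down to the finite Temperley--Lieb algebras by restricting to length $n$ for $n$ large, exactly in the spirit of the proofs of Propositions \ref{quotient by radical is irreducible} and \ref{radical irreducible or zero}. Write $v_{k}:=w^{(j_{k})}$, so $v_{0}=w$. Since $j_{0}\leq l-2$, an easy induction on the recursion $j_{k}=2(kl-1)-j_{k-1}$ shows $j_{k}\in[kl,\,(k+1)l-2]$ for $k\geq1$, hence $j_{0}<j_{1}<j_{2}<\cdots$, and all $j_{k}$ have the same parity. Moreover $j_{k}+j_{k-1}=2(kl-1)$ says precisely that $(j_{k},0)$ and $(j_{k},\tfrac{j_{k}-j_{k-1}}{2})$ are symmetric about a critical line, i.e.\ $v_{k}$ and $v_{k-1}$ form a symmetric pair of infinite link states with $s(v_{k-1})<s(v_{k})$; so by Proposition \ref{maps 2} each $\mathrm{Hom}(\mathcal{X}(v_{k}),\mathcal{X}(v_{k-1}))$ is one dimensional and we may take $\phi_{k}$ to be a chosen nonzero generator.

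Fix $k\geq1$. Any $x\in\mathcal{X}(v_{k-1})$ and $y\in\mathcal{X}(v_{k})$ are finite sums of link states equivalent to $v_{k-1}$, resp.\ $v_{k}$, so for $n$ large (depending on $x$ and $y$) their restrictions to length $n$ land in $\mathcal{V}_{n,p_{k-1}}$, resp.\ $\mathcal{V}_{n,p_{k}}$, where $p_{k}=\tfrac{n-j_{k}}{2}$ and likewise $p_{k-1}$, so that $p_{k}<p_{k-1}$ and $(n,p_{k}),(n,p_{k-1})$ is a symmetric pair. Because $\phi_{w,z}$ is the explicit generalization of Definition \ref{map def} recorded after Corollary \ref{iso} --- built by applying a finite $\phi$ near the finitely many points where the two link states differ and leaving the through-strings alone --- restricting to length $n$ turns $\phi_{k}$ into a $TL_{n}$-homomorphism $\mathcal{V}_{n,p_{k}}\to\mathcal{V}_{n,p_{k-1}}$; choosing $y$ with $\phi_{k}(y)\neq0$ shows this homomorphism is nonzero for $n$ large, so by Proposition \ref{maps} it is a nonzero scalar multiple of $\phi_{n,p_{k}}$. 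I then invoke the $TL_{n}$ facts recalled in Section \ref{Background}: for such a symmetric pair, $\mathrm{Ker}(\phi_{n,p_{k}})=\mathcal{R}_{n,p_{k}}$ and $\mathrm{Im}(\phi_{n,p_{k}})=\mathcal{R}_{n,p_{k-1}}$. Restricting: if $x\in\mathrm{Ker}(\phi_{k})$ then $x$ restricts into $\mathcal{R}_{n,p_{k}}$ for all large $n$, so $x\in\mathcal{R}(v_{k})$; if $x\in\mathcal{R}(v_{k})$ then $x$ restricts into $\mathrm{Ker}(\phi_{n,p_{k}})$ for all large $n$, so $\phi_{k}(x)$ restricts to $0$ for all large $n$ and hence $\phi_{k}(x)=0$. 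Thus $\mathrm{Ker}(\phi_{k})=\mathcal{R}(v_{k})$. Likewise $x=\phi_{k}(y)$ restricts into $\mathcal{R}_{n,p_{k-1}}$ for all large $n$, so $\mathrm{Im}(\phi_{k})\subseteq\mathcal{R}(v_{k-1})$.

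To finish I would upgrade the last inclusion: $\phi_{k}\neq0$ gives $\mathrm{Im}(\phi_{k})\neq0$, hence $\mathcal{R}(v_{k-1})\neq0$, hence $\mathcal{R}(v_{k-1})$ is irreducible by Proposition \ref{radical irreducible or zero}, and a nonzero subrepresentation of an irreducible is everything, so $\mathrm{Im}(\phi_{k})=\mathcal{R}(v_{k-1})$. Applying this with $k+1$ in place of $k$ yields $\mathrm{Im}(\phi_{k+1})=\mathcal{R}(v_{k})=\mathrm{Ker}(\phi_{k})$, i.e.\ the sequence is exact at each $\mathcal{X}(v_{k})$ ($k\geq1$), with $\mathrm{Im}(\phi_{1})=\mathcal{R}(w)$; and as a bonus $\mathcal{X}(v_{k})/\mathcal{R}(v_{k})\simeq\mathrm{Im}(\phi_{k})=\mathcal{R}(v_{k-1})$, the infinite analogue of $\mathcal{L}_{n,p}\simeq\mathcal{R}_{n,p^{\prime}}$.

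The step I expect to cost the most effort is the compatibility assertion in the second paragraph: that ``restrict to length $n$'' really does carry $\phi_{k}$ to a well-defined, nonzero $TL_{n}$-map (necessarily a scalar multiple of $\phi_{n,p_{k}}$), uniformly enough in $n$ that the finite pictures reassemble into $\phi_{k}$. This uses the convention that restriction is bumped up so as never to cut a cup, together with a check that this bump is harmless here --- the relevant labels $(n,p_{k}),(n,p_{k-1})$ stay non-critical, so no radical collapses --- after which the remainder is a mechanical transcription of the cellular-algebra facts for $TL_{n}$.
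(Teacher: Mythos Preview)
Your proposal is correct and is exactly the approach the paper takes: the paper's entire argument is the single clause ``By considering restricting terms in this sequence to finite $n$'' preceding the statement, and your write-up is a faithful expansion of that sketch, invoking the same $TL_{n}$ facts ($\mathrm{Ker}(\phi_{n,p})=\mathcal{R}_{n,p}$, $\mathrm{Im}(\phi_{n,p})=\mathcal{R}_{n,p^{\prime}}$) and the irreducibility of $\mathcal{R}(w)$ from Proposition~\ref{radical irreducible or zero} to upgrade the image inclusion to an equality. The compatibility check you flag --- that restriction carries $\phi_{k}$ to a nonzero scalar multiple of $\phi_{n,p_{k}}$ --- is indeed the only point requiring care, and it follows from the paper's remark that $\phi_{w,z}$ is the immediate generalization of Definition~\ref{map def}, built from $\phi_{n-2p,0}$ on the strings and extended by the identity elsewhere.
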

We note that this allows each irreducible representation to be realized as the kernel of the bilinear form, except for irreducibles $\mathcal{L}(w)$, $s(w)<l-1$, noting that for $l=2$, $\mathcal{L}(w)=0$ if $s(w)=0$. By the proof of Proposition \ref{maps 2} we find that the irreducibles are not isomorphic for $s(w)>l-1$. For $s(w)<l-1$, we have the following result:
\begin{prop}\label{no iso}
For $l\geq 4$, given inequivalent infinite link states $w,z$ with $s(w)<l-1$ and $s(z)$ finite, then $\mathcal{L}(w)$ and $\mathcal{L}(z)$ are not isomorphic.	
\end{prop}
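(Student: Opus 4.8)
The plan is to leverage the structure already established: every $\mathcal{X}(w)$ with $s(w) = j_0 < l-1$ is not irreducible (by Theorem \ref{Irreducibility of representations}), so it has exactly two composition factors $\mathcal{R}(w)$ and $\mathcal{L}(w)$, and by Proposition \ref{exact} the radical $\mathcal{R}(w)$ is realized as the image of a map $\phi_1$ from $\mathcal{X}(w^{(j_1)})$ with $s(w^{(j_1)}) = j_1 = 2(l-1) - j_0$. Since $0 \le j_0 < l-1$, we get $l-1 < j_1 \le 2(l-1)$, so in particular $s(w^{(j_1)}) > l-1$. Thus each $\mathcal{L}(w)$ appears as the radical $\mathcal{R}(w^{(j_1)}) = \mathcal{L}(w^{(j_1)})^{?}$ — more precisely, $\mathcal{L}(w) \simeq \mathcal{X}(w^{(j_1)})/\mathcal{R}(w^{(j_1)})$ is a composition factor of $\mathcal{X}(w^{(j_1)})$, whose link state has $s > l-1$. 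The idea is then to reduce the $s(w) < l-1$ case to the $s > l-1$ case, where non-isomorphism was already obtained in the proof of Proposition \ref{maps 2}.

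The key steps, in order, would be: (1) Suppose $\psi: \mathcal{L}(w) \to \mathcal{L}(z)$ is a nonzero homomorphism, hence (both being irreducible) an isomorphism. (2) Using Proposition \ref{exact}, identify $\mathcal{L}(w)$ with $\mathcal{R}(w^{(j_1)}) \subseteq \mathcal{X}(w^{(j_1)})$ and $\mathcal{L}(z)$ with $\mathcal{R}(z^{(j_1')}) \subseteq \mathcal{X}(z^{(j_1')})$, where $j_1 = 2(l-1)-s(w)$, $j_1' = 2(l-1) - s(z)$ if $s(z) < l-1$ (if $s(z) \ge l-1$ we argue slightly differently, see below). (3) Compose to get a nonzero map $\mathcal{R}(w^{(j_1)}) \to \mathcal{X}(z^{(j_1')})$; since $\mathcal{R}(w^{(j_1)})$ is a subrepresentation of $\mathcal{X}(w^{(j_1)})$ and $\mathcal{R}(z^{(j_1')})$ is the unique maximal proper subrepresentation, one shows this extends/relates to a nonzero map between the $\mathcal{X}(\cdot)$'s with string numbers $> l-1$, or directly yields a nonzero map $\mathcal{R}(w^{(j_1)}) \to \mathcal{R}(z^{(j_1')})$, i.e. a nonzero map between irreducibles coming from link states with $s > l-1$. (4) Apply the $s > l-1$ part of Proposition \ref{maps 2} to conclude $w^{(j_1)}$ and $z^{(j_1')}$ are equivalent (or form a symmetric pair), and then unwind: equivalence of $w^{(j_1)}$ and $z^{(j_1')}$ forces $s(w) = s(z)$ and $w \sim z$ after cutting cups back, contradicting that $w, z$ are inequivalent; the symmetric-pair case is excluded because both already sit at distance $l-1-j_0$ resp. $l-1-s(z)$ from a critical line and being a further symmetric pair would force them onto the same side.

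For the part of the argument where $s(z) \ge l-1$ (so $\mathcal{X}(z)$ may itself be irreducible, $\mathcal{L}(z) = \mathcal{X}(z)$ if $s(z) = kl-1$, or $\mathcal{L}(z) = \mathcal{X}(z)/\mathcal{R}(z)$ otherwise), I would handle it by the same mechanism: if $\mathcal{X}(z)$ is irreducible it is $\mathcal{L}(z)$ with $s(z) \ge l-1$, and a nonzero map from $\mathcal{L}(w)$, realized inside $\mathcal{X}(w^{(j_1)})$ with $s(w^{(j_1)}) > l-1$, would contradict the $s > l-1$ non-isomorphism statement directly; if not, $\mathcal{L}(z) = \mathcal{R}(z^{(j_1')})$ with $l-1 < s(z^{(j_1')})$ and the same comparison applies. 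In all cases the point is that after one application of $\phi_1$ from Proposition \ref{exact}, every irreducible $\mathcal{L}(w)$ with $s(w) < l-1$ is ``pushed into'' the regime $s > l-1$ where Proposition \ref{maps 2} already settles the question.

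The main obstacle I anticipate is step (3): turning a nonzero map $\mathcal{L}(w) \to \mathcal{L}(z)$ into a nonzero map between representations of the form $\mathcal{X}(\tilde w) \to \mathcal{X}(\tilde z)$ (or $\mathcal{R}(\tilde w) \to \mathcal{R}(\tilde z)$) with $s(\tilde w), s(\tilde z) > l-1$, cleanly enough to invoke Proposition \ref{maps 2}. One has to be careful that the identifications $\mathcal{L}(w) \simeq \mathcal{R}(w^{(j_1)})$ from Proposition \ref{exact} are compatible across the two link states, and that composing an inclusion with an isomorphism and a quotient map does not accidentally produce the zero map — this requires knowing that $\mathcal{R}(w^{(j_1)})$ is not killed, which follows because it is irreducible and the composite is nonzero on it. A secondary subtlety is the bookkeeping needed to rule out the symmetric-pair alternative in Proposition \ref{maps 2} once we are in the $s > l-1$ regime, since there the link states $w^{(j_1)}$ and $z^{(j_1')}$ could a priori form a symmetric pair even though the original $w, z$ do not; this is resolved by tracking the position relative to critical lines, which is determined purely by $s(\cdot) \bmod l$, and checking the resulting congruences are incompatible with both $w, z$ being inequivalent and having $s(w), s(z) < l-1$.
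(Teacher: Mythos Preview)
Your proposal has a genuine gap at step (2), and it is fatal to the whole strategy. You claim that for $s(w)<l-1$ the irreducible $\mathcal{L}(w)$ can be identified with $\mathcal{R}(w^{(j_1)})$ (or, in your alternative phrasing, with $\mathcal{X}(w^{(j_1)})/\mathcal{R}(w^{(j_1)})$). Neither identification is correct. From the exact sequence in Proposition~\ref{exact}, the map $\phi_{1}:\mathcal{X}(w^{(j_1)})\to\mathcal{X}(w)$ has image $\mathcal{R}(w)$ and kernel $\mathcal{R}(w^{(j_1)})$, so
\[
\mathcal{L}(w^{(j_1)})\;=\;\mathcal{X}(w^{(j_1)})/\mathcal{R}(w^{(j_1)})\;\simeq\;\mathcal{R}(w),
\]
which is the \emph{radical} of $\mathcal{X}(w)$, not its top $\mathcal{L}(w)$. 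Likewise $\mathcal{R}(w^{(j_1)})\simeq\mathcal{L}(w^{(j_2)})$. Thus the composition factors of $\mathcal{X}(w^{(j_1)})$ are $\mathcal{R}(w)$ and $\mathcal{L}(w^{(j_2)})$; the irreducible $\mathcal{L}(w)$ does not occur there at all. This is precisely the content of the sentence in the paper immediately preceding the proposition: ``this allows each irreducible representation to be realized as the kernel of the bilinear form, \emph{except} for irreducibles $\mathcal{L}(w)$, $s(w)<l-1$.'' So there is no way to ``push'' $\mathcal{L}(w)$ into the regime $s>l-1$ via Proposition~\ref{exact}; that proposition pushes $\mathcal{R}(w)$ there, not $\mathcal{L}(w)$.

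The paper's proof proceeds by an entirely different, direct argument. Assuming an isomorphism $\mathcal{L}(w)\simeq\mathcal{L}(z)$ sending $w'\mapsto z'$, one chooses $n$ with a simple cup at position $n$ on $w'$ but not on $z'$ (while the $n$th and $(n+1)$th points of $z'$ are still endpoints of cups). Then $e_{n}w'=\delta w'$, so the element $e_{n}z'-\delta z'$ would have to lie in $\mathcal{R}(z)$; in particular $\langle e_{n}z',z'\rangle=\delta\langle z',z'\rangle$ for the restricted bilinear form. But a direct diagrammatic computation gives $\langle e_{n}z',z'\rangle=\delta^{-1}\langle z',z'\rangle$, forcing $\delta^{2}=1$, i.e.\ $l\le 3$. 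This bilinear-form obstruction is the missing idea in your approach.
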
	
\begin{proof}
If $w$ and $z$ differ at only finitely many points then the proof follows immediately from restriction. Hence, we only consider when $w$ and $z$ differ at infinitely many points. Assume otherwise that $\mathcal{L}(w)\simeq\mathcal{L}(z)$, with $w^{\prime}\mapsto z^{\prime}$, where we view $w^{\prime}$,$z^{\prime}$ as elements of $\mathcal{X}(w)$, $\mathcal{X}(z)$ respectively. Choose some $n$ large such that $e_{n}w^{\prime}=\delta w^{\prime}$, but $e_{n}z^{\prime}\neq \delta z^{\prime}$, and the $n$th and $(n+1)$th points of $z^{\prime}$ are connected to cups, not strings. For this map to give an isomorphism of $\mathcal{L}(w)$ and $\mathcal{L}(z)$, we must have $e_{n}z^{\prime}-\delta z^{\prime}\in\mathcal{R}(z)$. We can therefore restrict $e_{n}z^{\prime}-\delta z^{\prime}$ to $\mathcal{R}_{n^{\prime},p}$ for some $n^{\prime}>n$. Hence we must have $\langle e_{n}z^{\prime}-\delta z^{\prime},z^{\prime}\rangle=0$, i.e. $\langle e_{n}z^{\prime},z^{\prime}\rangle=\delta\langle z^{\prime},z^{\prime}\rangle$. As there isn't a cup at the $n$th point of $z^{\prime}$, but it is  connected to a cup, we find that $\langle e_{n}z^{\prime},z^{\prime}\rangle=\delta^{-1}\langle z^{\prime},z^{\prime}\rangle$, (see figure below), and so $e_{n}z^{\prime}-\delta z^{\prime}\notin\mathcal{R}(z)$ unless $\delta^{2}=1$. Hence there are no such isomorphism maps for $l\geq 4$. 	
\end{proof}	
\begin{figure}[H]
	\centering
	\includegraphics[width=0.5\linewidth]{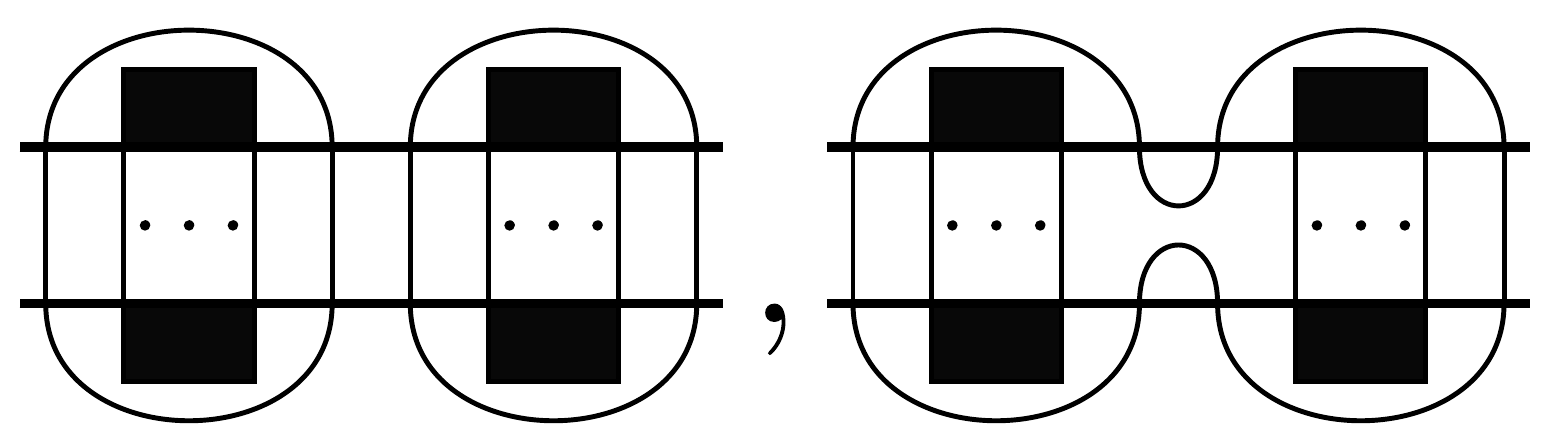}
	\caption{How $e_{n}$ affects the bilinear form.}
\end{figure}
For the cases $s(w)=0,1$, $l=3$, we have the following result:
\begin{corr}\label{one dim}
	For $l=3$, let $X$ be the non-trivial one-dimensional representation given by $e_{i}\nu=\delta\nu$. If $s(w)=0,1$, then $\mathcal{L}(w)\simeq X$.
\end{corr}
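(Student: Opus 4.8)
The plan is to restrict $w$ to a large finite $TL_{n}$ and identify the relevant irreducible quotient explicitly as $X$. First I would note that $l=3$ forces $\delta=q+q^{-1}=\pm 1$, so in particular $\delta\neq 0$ and $\delta^{2}=1$ (the latter being exactly what makes the relations $e_{i}e_{i\pm1}e_{i}=e_{i}$ hold in $X$), and in particular the exceptional case $\delta=0$, $s(w)=0$ never arises here. I would then record that $TL_{n}(\pm 1)$ has exactly two one-dimensional representations: the trivial one $e_{i}\mapsto 0$, and $X$ itself, $e_{i}\mapsto \delta$. Indeed, in any one-dimensional representation $e_{i}^{2}=\delta e_{i}$ forces $e_{i}\mapsto c_{i}\in\{0,\delta\}$, and if some $c_{i}=0$ then $e_{i\pm1}e_{i}e_{i\pm1}=e_{i\pm1}$ forces $c_{i\pm1}=0$, so by propagation either all $c_{j}=0$ or all $c_{j}=\delta$.

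Next I would observe that for $s(w)\in\{0,1\}$ and every sufficiently large $n$ (chosen, per the definition of restriction, large enough to include every string of $w$ and so that no cup is cut), the restriction $w_{n}$ of $w$ lies in $\mathcal{V}_{n,\lfloor n/2\rfloor}$: a $0$-string restriction gives an $(n,n/2)$-link state and a $1$-string restriction an $(n,(n-1)/2)$-link state. By the recursion of Proposition~\ref{Recursion} (equivalently the computation $L_{2p,p}=L_{2p+1,p}=1$ carried out in the proof of Lemma~\ref{dim of $TL_{n}$ reps}) the quotient $\mathcal{L}_{n,\lfloor n/2\rfloor}$ is one-dimensional; since $\delta\neq 0$ the modules $\mathcal{L}_{n,p}$, $0\le p\le\lfloor n/2\rfloor$, form a complete set of pairwise non-isomorphic irreducibles, so $\mathcal{L}_{n,\lfloor n/2\rfloor}\not\simeq\mathcal{L}_{n,0}$, and therefore $\mathcal{L}_{n,\lfloor n/2\rfloor}$ is the \emph{non-trivial} one-dimensional module, whose character $\chi$ satisfies $\chi(e_{i})=\delta$ for all $i\le n-1$. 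Moreover, since $\delta\neq 0$, $w_{n}\notin\mathcal{R}_{n,\lfloor n/2\rfloor}$, so its image $\overline{w_{n}}$ in $\mathcal{L}_{n,\lfloor n/2\rfloor}$ is nonzero.

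I would then transfer this to $\mathcal{X}(w)$. By cyclicity (Proposition~\ref{quotient by radical is irreducible}) any $x\in\mathcal{X}(w)$ has the form $x=aw$ with $a\in TL_{\infty}$ a finite expression in $e_{1},\dots,e_{N}$; for all $n$ large (larger than $N$, and large enough for the restrictions of $w$ and of $aw$ to stabilise) the restriction of $x$ to $\mathcal{V}_{n,\lfloor n/2\rfloor}$ equals $a\,w_{n}$, whose image in $\mathcal{L}_{n,\lfloor n/2\rfloor}$ is $\chi(a)\,\overline{w_{n}}$. Hence $x-\chi(a)\,w$ restricts into $\mathcal{R}_{n,\lfloor n/2\rfloor}$ for all large $n$, i.e.\ $x-\chi(a)\,w\in\mathcal{R}(w)$. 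Consequently $\mathcal{L}(w)=\mathcal{X}(w)/\mathcal{R}(w)$ is spanned by $\overline{w}\neq 0$, hence one-dimensional, and $e_{i}\overline{w}=\overline{e_{i}w}=\chi(e_{i})\,\overline{w}=\delta\,\overline{w}$ for every $i$, so $\mathcal{L}(w)\simeq X$.

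I expect the only real obstacle to be the bookkeeping in the last paragraph: one must choose $n$ simultaneously large enough for $a$, for the stabilisation of the restrictions of $w$ and $aw$, and for the no-cup-cutting convention, and then check that ``act by $a\in TL_{\infty}$, then restrict'' agrees with ``restrict, then act by the image of $a$ in $TL_{n}$''. Once that compatibility is set up cleanly, everything else is immediate from the $TL_{n}$ facts recalled above.
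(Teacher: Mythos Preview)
Your argument is correct and is organized around restriction to $TL_{n}$: you identify the restriction of $w$ with an element of $\mathcal{V}_{n,\lfloor n/2\rfloor}$, invoke the dimension computation $L_{2p,p}=L_{2p+1,p}=1$ for $l=3$ from Lemma~\ref{dim of $TL_{n}$ reps}, distinguish this one-dimensional quotient from the trivial one $\mathcal{L}_{n,0}$, and then lift this to $\mathcal{L}(w)$ via the compatibility of restriction with the $TL_{\infty}$ action. This is entirely in keeping with the restriction technique used in Propositions~\ref{radical irreducible or zero} and~\ref{maps 2}, and the bookkeeping you flag is routine.

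The paper takes a different, more explicit route: it asserts a direct combinatorial description of $\mathcal{R}(w)$ as spanned by differences $w'-\delta w''$ of link states related by a local reconnection of cups, and then observes that any two link states in $\mathcal{X}(w)$ are connected by a chain of such moves, forcing the quotient to be one-dimensional and visibly non-trivial. What the paper's approach buys is an explicit set of generators for the radical without appealing to the finite-$n$ dimension formulas; what your approach buys is that it avoids having to justify that those differences really do span $\mathcal{R}(w)$ (a step the paper leaves implicit), and it makes the identification with $X$ fall out automatically from the classification of one-dimensional $TL_{n}$-modules rather than by inspection.
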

\begin{proof}
If $l=3$ and $s(w)=0,1$, then $\mathcal{R}(w)$ is the set of elements of the form $w^{\prime}-\delta w^{\prime\prime}$, where $w^{\prime},w^{\prime\prime}$ are link states in $\mathcal{X}(w)$ such that joining together two strings on $w^{\prime}$ and cutting a cup results in $w^{\prime\prime}$. Any two link states in $\mathcal{X}(w)$ can be transformed into each other by a sequence of cutting cups and joining together strings, hence quotienting by $\mathcal{R}(w)$ results in a one-dimensional representation, which we can see is not the trivial one.
\end{proof}
Combining together the results of Propositions \ref{maps 2}, \ref{exact}, \ref{no iso}, and Corollaries \ref{iso} and \ref{one dim}, we get the following:
\begin{thm}\label{Irr iso}
The Representations $\mathcal{X}(w),\mathcal{X}(w^{\prime})$ are isomorphic if and only if the link states $w,w^{\prime}$ are equivalent. The non-trivial irreducible quotients $\mathcal{L}(w),\mathcal{L}(w^{\prime})$ are isomorphic if and only if the link states $w,w^{\prime}$ are equivalent, or $l=3$ and $s(w)=0,1$.
\end{thm}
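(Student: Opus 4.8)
The plan is to assemble the structural results proved above; no genuinely new idea is needed, only careful case-checking. The first assertion is immediate: $\mathcal{X}(w)\simeq\mathcal{X}(w^{\prime})$ if and only if $w\sim w^{\prime}$ is exactly Corollary \ref{iso}. For sufficiency in the second assertion, if $w\sim w^{\prime}$ then $\mathcal{X}(w)\simeq\mathcal{X}(w^{\prime})$ by Corollary \ref{iso}; by Propositions \ref{quotient by radical is irreducible} and \ref{radical irreducible or zero} each $\mathcal{X}(\cdot)$ is indecomposable with at most two composition factors, so $\mathcal{R}(\cdot)$ is its socle, and any isomorphism $\mathcal{X}(w)\to\mathcal{X}(w^{\prime})$ carries $\mathcal{R}(w)$ onto $\mathcal{R}(w^{\prime})$ and hence descends to $\mathcal{L}(w)\simeq\mathcal{L}(w^{\prime})$. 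If instead $l=3$ and $s(w),s(w^{\prime})\in\{0,1\}$, then $\mathcal{L}(w)\simeq X\simeq\mathcal{L}(w^{\prime})$ by Corollary \ref{one dim}, where $X$ is the nontrivial one-dimensional representation.

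For necessity, suppose $\mathcal{L}(w)\simeq\mathcal{L}(w^{\prime})$ with $w\not\sim w^{\prime}$; I must force $l=3$ and $s(w),s(w^{\prime})\in\{0,1\}$. If $q$ is generic then $\mathcal{R}(w)=\mathcal{R}(w^{\prime})=0$ by Theorem $7.2$ of \cite{RSA}, so $\mathcal{L}=\mathcal{X}$ on both sides, contradicting Corollary \ref{iso}; hence $q$ is a root of unity. Next, if $\mathcal{X}(w)$ is irreducible -- i.e.\ $s(w)=\infty$, $s(w)=kl-1$, or $l=2$ and $s(w)=0$ by Theorem \ref{Irreducibility of representations} -- then the composite $\mathcal{X}(w^{\prime})\twoheadrightarrow\mathcal{L}(w^{\prime})\xrightarrow{\sim}\mathcal{L}(w)=\mathcal{X}(w)$ is a nonzero morphism, which by Proposition \ref{maps 2} forces $w\sim w^{\prime}$ (excluded) or that $w^{\prime},w$ form a symmetric pair with $s(w)<s(w^{\prime})$; but the symmetric-pair map of Proposition \ref{maps 2} has image $\mathcal{R}(w)=0$, so it cannot be surjective, a contradiction. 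Interchanging $w$ and $w^{\prime}$ rules out $\mathcal{X}(w^{\prime})$ irreducible as well, so from now on both $\mathcal{X}(w)$ and $\mathcal{X}(w^{\prime})$ are reducible; in particular $s(w),s(w^{\prime})$ are finite and $\notin\{kl-1:k\geq 1\}$.

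In this reducible case I would split on $l$. If $l\geq 4$ and one side, say $w$, has $s(w)<l-1$, then Proposition \ref{no iso} applies (its hypotheses hold, since $s(w^{\prime})$ is finite) and gives $\mathcal{L}(w)\not\simeq\mathcal{L}(w^{\prime})$, a contradiction; otherwise both $s(w),s(w^{\prime})\geq l-1$, and since $l-1=kl-1$ is excluded both are $>l-1$, so the remark following Proposition \ref{exact} -- that these irreducibles, realized via the exact sequence as radicals, are pairwise non-isomorphic for inequivalent link states -- again gives a contradiction; thus $l\geq 4$ is impossible. If $l=2$, reducibility with $\mathcal{L}(w)\neq 0$ forces $s(w)$ even and $\geq 2>l-1$, so the same remark applies and is again impossible. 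Hence $l=3$. If moreover $s(w),s(w^{\prime})>2$ the remark still applies, so some side, say $w$, has $s(w)\in\{0,1\}$, whence $\mathcal{L}(w)\simeq X$ is one-dimensional by Corollary \ref{one dim}; then $\mathcal{L}(w^{\prime})$ is one-dimensional too, which by Lemma \ref{dim of $TL_{n}$ reps} -- restrict $\mathcal{L}(w^{\prime})$ to $TL_{n}$ for large $n$, where it is $\mathcal{L}_{n,(n-s(w^{\prime}))/2}$ of dimension $\geq n-2$ unless $s(w^{\prime})\in\{0,1\}$ -- forces $s(w^{\prime})\in\{0,1\}$, completing the proof.

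I do not expect any single hard step here; the real work is the bookkeeping in the necessity direction -- isolating the subcases where $\mathcal{X}(w)$ or $\mathcal{X}(w^{\prime})$ is already irreducible, and then checking that Proposition \ref{maps 2}, the remark after Proposition \ref{exact}, Proposition \ref{no iso}, and the dimension bound of Lemma \ref{dim of $TL_{n}$ reps} jointly cover every pair $(s(w),s(w^{\prime}))$ in $\mathbb{N}\cup\{\infty\}$ without gaps, with the family $l=3$, $s\in\{0,1\}$ emerging as the unique surviving source of extra isomorphisms.
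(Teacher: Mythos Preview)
Your proposal is correct and follows exactly the route the paper intends: the paper's own ``proof'' is simply the sentence ``Combining together the results of Propositions \ref{maps 2}, \ref{exact}, \ref{no iso}, and Corollaries \ref{iso} and \ref{one dim}'', and you have faithfully spelled out that combination.

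One small point of care in your final step: the claim ``restrict $\mathcal{L}(w')$ to $TL_{n}$, where it is $\mathcal{L}_{n,(n-s(w'))/2}$'' is not literally true as stated, since the $TL_{n}$-restriction of $\mathcal{L}(w')$ is enormous. What is true (and suffices) is that the $TL_{n}$-submodule generated by the image of $w'$ in $\mathcal{L}(w')$ is $\mathcal{V}_{n,p}/(\mathcal{R}(w')\cap\mathcal{V}_{n,p})\simeq\mathcal{L}_{n,p}$ for $n$ large, using Lemma \ref{radical preserved} to identify the intersection with $\mathcal{R}_{n,p}$; then Lemma \ref{dim of $TL_{n}$ reps} applies as you say. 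Alternatively, and more in the spirit of the results the paper cites, you can avoid this altogether: for $l=3$ and $s(w')>2$ the exact sequence of Proposition \ref{exact} gives $\mathcal{L}(w')\simeq\mathcal{R}(w^{-})\hookrightarrow\mathcal{X}(w^{-})$, so an isomorphism $\mathcal{L}(w)\simeq\mathcal{L}(w')$ yields a nonzero map $\mathcal{X}(w)\to\mathcal{X}(w^{-})$, and Proposition \ref{maps 2} then forces $w\sim w^{-}$ or a symmetric pair, both of which are quickly ruled out.
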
	 
Now that we have determined the structure of the $\mathcal{X}(w)$ representations, we want to study if there are any possible extensions of them. We define a method of constructing the projective indecomposable $TL_{n}$ representations that generalizes to give extensions of certain $TL_{\infty}$ representations: 
\begin{defin}\label{Extensions}
Let $(n,p),(n,p^{\prime})$ be a symmetric pair, with $p<p^{\prime}$, and $x\in\mathcal{V}_{n,p}$. Let $\phi$ be the unique map $\phi:\mathcal{V}_{n,p}\rightarrow\mathcal{V}_{n,p^{\prime}}$ given by $\phi:x\mapsto\sum\limits_{i}a_{i}y_{i}$, $a_{i}\in\mathbb{C}$, where the sum is over all elements $y_{i}\in\mathcal{V}_{n,p^{\prime}}$ formed by adding $\frac{p^{\prime}-p}{2}$ cups to $x$ (see Definition \ref{map def}). We define an extension $\mathcal{V}_{n,p^{\prime}}\rightarrow P_{n,p}\rightarrow\mathcal{V}_{n,p}$ as follows: If a generator $e_{k}$ increases the number of cups of $x$, set $\tilde{e}_{k}x:=\sum\limits_{j}a_{j}y_{j}$, where the sum is over all elements $y_{j}\in\phi(x)$ with a simple cup at the $k$th position, and coefficients from $\phi(x)$.
\end{defin}
As an example, the map $\phi:\mathcal{V}_{5,0}\rightarrow\mathcal{V}_{5,2}$ exists for $l=4$, i.e. $\delta=\pm\sqrt{2}$. This map is given as follows:
\begin{figure}[H]
	\centering
	\includegraphics[width=1\linewidth]{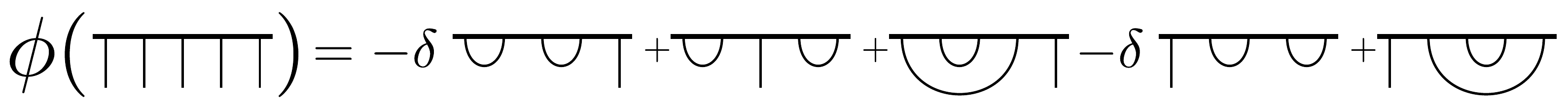}
\end{figure}
From this, we define the action of the generators in the extension $\mathcal{V}_{5,2}\rightarrow P_{5,0}\rightarrow\mathcal{V}_{5,0}$ as follows:
\begin{figure}[H]
	\centering
	\includegraphics[width=1\linewidth]{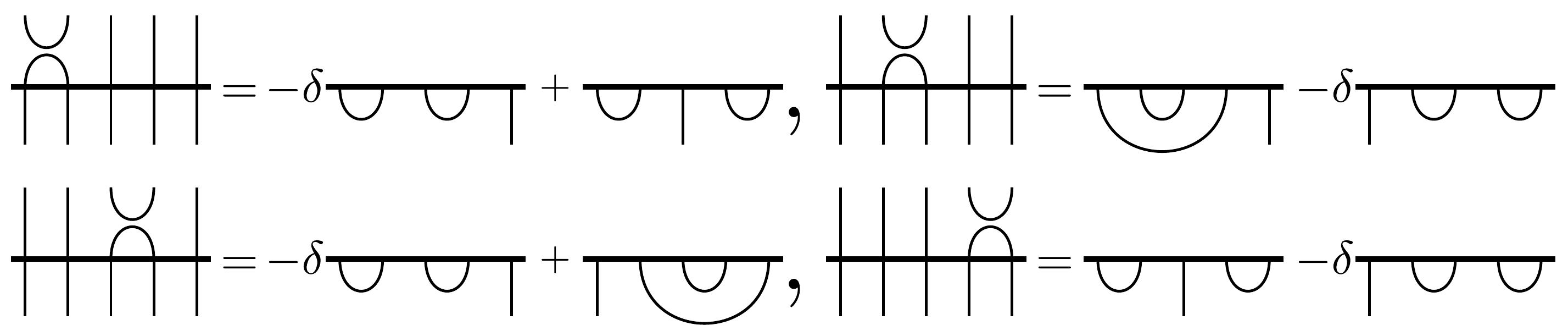}
\end{figure}
\begin{remark}
For some extensions, such as $\mathcal{V}_{n,1}\rightarrow P_{n,0}\rightarrow\mathcal{V}_{n,0}$, the simpler choice of taking all coefficients to be equal to one is possible, but for higher $p$ this does not always define an extension of representations, for example in the case $\mathcal{V}_{6,2}\rightarrow P_{6,0}\rightarrow\mathcal{V}_{6,0}$. We shall show that our more complicated choice of coefficients always defines a non-split extension.
\end{remark}
\begin{thm}
This extension is non-split, i.e. $P_{n,p}\simeq\mathcal{P}_{n,p}$.
\end{thm}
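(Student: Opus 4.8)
The plan is to verify, in turn, that the recipe of Definition~\ref{Extensions} produces a genuine $TL_n$-module, that it sits in a short exact sequence $0\to\mathcal{V}_{n,p^{\prime}}\to P_{n,p}\to\mathcal{V}_{n,p}\to 0$, that this sequence does not split, and finally to deduce $P_{n,p}\simeq\mathcal{P}_{n,p}$ from the structure of the projective covers recalled earlier. For well-definedness the one external input is that, with the coefficients $a_i$ of Definition~\ref{map def}, the map $\phi:\mathcal{V}_{n,p}\to\mathcal{V}_{n,p^{\prime}}$ is a $TL_n$-homomorphism (this is \cite{CGM}). Granting it, checking $e_k^2=\delta e_k$, $e_ke_{k\pm1}e_k=e_k$ and $e_ke_j=e_je_k$ on $\mathcal{V}_{n,p}\oplus\mathcal{V}_{n,p^{\prime}}$ comes down to two remarks: on the $\mathcal{V}_{n,p^{\prime}}$-summand the action is unchanged; and the correction $\tilde e_k x$ assigned when $e_k$ would annihilate a link state $x$ is, by construction, exactly the graded piece of $\phi(x)$ supported on link states with a simple cup at $k$, so every relation among words in the $e_k$ applied to $x$ is forced by the corresponding relation for $\phi(x)$ together with $e_k\phi(x)=\phi(e_kx)=0$. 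The projection $\pi:P_{n,p}\to\mathcal{V}_{n,p}$ and inclusion $\iota:\mathcal{V}_{n,p^{\prime}}\to P_{n,p}$ are module maps by inspection, and $\dim P_{n,p}=d_{n,p}+d_{n,p^{\prime}}=\dim\mathcal{P}_{n,p}$.

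The heart of the matter is non-splitness. A splitting is a section $s:\mathcal{V}_{n,p}\to P_{n,p}$; using the ``joining'' description of $\phi_{n,p}$ in terms of $\phi_{n-2p,0}$ one sees that on the cyclic generator $x_0$ of $\mathcal{V}_{n,p}$ the element $\zeta=s(x_0)-x_0\in\mathcal{V}_{n,p^{\prime}}$ is supported on link states carrying the $p$ leftmost cups of $x_0$, which reduces the question to the case $p=0$: there $\mathcal{V}_{n,0}=\mathbb{C}|_n$, $(n,0),(n,p^{\prime})$ is a symmetric pair, and a section would produce $\zeta\in\mathcal{V}_{n,p^{\prime}}$ with $e_k\zeta=-\tilde e_k|_n$ for all $k$, where $\tilde e_k|_n$ is the sub-sum $c_k$ of $\eta_0:=\phi(|_n)=\sum_i a_iy_i$ over the $y_i$ containing the cup $(k,k+1)$. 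Since $\phi$ is a homomorphism, $e_k\eta_0=\phi(e_k|_n)=0$, while expanding $e_k\eta_0$ in the link-state basis gives $e_k\eta_0=\delta c_k+e_k(\eta_0-c_k)$; hence $e_k(\eta_0-c_k)=-\delta c_k=\delta e_k\zeta$, so $e_k(\eta_0-c_k-\delta\zeta)=0$ for every $k$, i.e. $\eta_0-c_k-\delta\zeta$ lies in $K:=\{w\in\mathcal{V}_{n,p^{\prime}}:e_kw=0\ \forall k\}$. For $l\geq 4$ the composition factors of $\mathcal{V}_{n,p^{\prime}}$ are $\mathcal{L}_{n,p^{\prime}}$ (of dimension $\geq n-2>1$ by Lemma~\ref{dim of $TL_{n}$ reps}) and $\mathcal{R}_{n,p^{\prime}}\simeq\mathcal{L}_{n,0}$ (the trivial module, spanned by $\eta_0$), and $\mathcal{V}_{n,p^{\prime}}$ is indecomposable, so $K=\mathbb{C}\eta_0$. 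Thus every difference $c_k-c_{k^{\prime}}$ lies in $\mathbb{C}\eta_0$ — impossible, since all $a_i$ are nonzero (the quantum integers $[n(h)]$ occurring never vanish, as $n(h)$ stays below $l$) and for suitable $k\neq k^{\prime}$ the supports of $c_k$ and $c_{k^{\prime}}$ differ and both omit some $y_i$ occurring in $\eta_0$. Hence no section exists. The values $\delta^2=1$ ($l=3$) and $\delta=0$ ($l=2$), where the $\delta e_k\zeta$ manipulation degenerates, are few and are checked by hand.

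With non-splitness in hand, $P_{n,p}\simeq\mathcal{P}_{n,p}$ follows formally: by Proposition~\ref{Ext of TL_n}, $\mathrm{Ext}^1(\mathcal{V}_{n,p},\mathcal{V}_{n,p^{\prime}})\simeq\mathbb{C}$, so there is a unique non-split extension of $\mathcal{V}_{n,p}$ by $\mathcal{V}_{n,p^{\prime}}$ up to equivalence, and the projective cover is one such. (Equivalently, lift $P_{n,p}\twoheadrightarrow\mathcal{V}_{n,p}\twoheadrightarrow\mathcal{L}_{n,p}$ through $\mathcal{P}_{n,p}$; since $\mathcal{V}_{n,p}$ has simple top, the only proper submodule of $\mathcal{V}_{n,p^{\prime}}$ is $\mathcal{R}_{n,p^{\prime}}\simeq\mathcal{L}_{n,p}$, and $\mathrm{Ext}^1(\mathcal{V}_{n,p},\mathcal{L}_{n,p})=0$, the lifted map is onto, hence an isomorphism by the dimension count above.)

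I expect the decisive difficulty to be precisely the last step of the non-split argument: showing, uniformly in $n$ and $p^{\prime}$, that $c_k\not\equiv c_{k^{\prime}}\pmod{\mathbb{C}\eta_0}$ for some simple-cup positions $k,k^{\prime}$. This is transparent once $n$ is large (link states with a cup at $k$ are disjoint from enough of those with a cup at $k^{\prime}$, and every coefficient of $\phi(|_n)$ is nonzero), but a clean write-up should isolate the exact combinatorial statement — no nonzero multiple of $\phi(|_n)$ is supported on the link states containing a prescribed simple cup — and the handful of $\delta^2\in\{0,1\}$ cases must be treated separately.
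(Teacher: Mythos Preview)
Your non-splitness argument contains a genuine logical slip. From a putative section you correctly obtain $e_k\zeta=-c_k$ and $e_k\eta_0=0$, and hence $e_k(\eta_0-c_k-\delta\zeta)=0$. But the element $\eta_0-c_k-\delta\zeta$ \emph{depends on $k$}: you have shown only that for each $k$ the vector $w_k:=\eta_0-c_k-\delta\zeta$ is killed by the \emph{single} generator $e_k$, not by all of them. That is far weaker than $w_k\in K=\bigcap_j\ker e_j$, and in fact $w_k\notin K$ in general. For instance at $l=4$ ($\delta^2=2$) with $n=4$, $p^{\prime}=1$, one has $\eta_0=c_1-\delta c_2+c_3$ and your $c_1$ is the single link state with a cup at $(1,2)$; then $e_2(\eta_0-c_1-\delta\zeta)=-\delta^2 c_2-c_2-\delta e_2\zeta$, which does not vanish for the $\zeta$ you would need. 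So the step ``$\eta_0-c_k-\delta\zeta\in K$, hence $c_k-c_{k^{\prime}}\in\mathbb{C}\eta_0$'' is invalid, and the remainder of your argument collapses.

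The paper's proof proceeds quite differently here: it first treats $p^{\prime}=1$ by writing a section as $|^{\otimes n}\mapsto\tilde{|}^{\otimes n}+\sum b_ic_i$, reading off from each $e_k$ a linear recursion $(-1)^{k+1}[k]+b_{k-1}+\delta b_k+b_{k+1}=0$ with boundary conditions, solving it explicitly, and showing the solution forces $[n+1]$ in a denominator while $[n+1]=0$ at the relevant root of unity; it then reduces $p^{\prime}>1$ to $p^{\prime}-1$ via a lemma comparing the ``last-cup'' truncation $\bar\phi_{p^{\prime}}$ with $\phi_{p^{\prime}-1}$. Your reduction to $p=0$ is in the same spirit, but the core obstruction to a section really is this arithmetic with quantum integers, not a kernel argument. (Separately, your well-definedness paragraph is too quick: that $\phi$ is a homomorphism gives $e_i\phi(x)=0$, but deducing $e_i\tilde e_jx=e_j\tilde e_ix$ for $|i-j|>1$ still requires the diagram-by-diagram bookkeeping the paper carries out; it is not an automatic consequence.) Your final identification $P_{n,p}\simeq\mathcal P_{n,p}$ via $\mathrm{Ext}^1(\mathcal V_{n,p},\mathcal V_{n,p^{\prime}})\simeq\mathbb{C}$ is fine and matches the paper.
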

\begin{proof}
We consider the case $\mathcal{V}_{n,p}\rightarrow P_{n,0}\rightarrow\mathcal{V}_{n,0}$, as any other case can be constructed from this by adding cups to both sides. We first check that this new action defines a representation. The relation $e_{k}^{2}x=\delta e_{k}x$ is immediate. For the relation $e_{k}e_{k\pm 1}e_{k}x=e_{k}x$, we have $e_{k}e_{k+1}\cup\downarrow=e_{k}\downarrow\cup=\cup\downarrow$, where we use $\downarrow$ to denote a point connected to an arbitrary string or cup. 
For the relation $e_{i}e_{j}=e_{j}e_{i}$ if $\lvert i-j\rvert>1$, this is immediate if $p=1$. For $p>1$, fix such a choice of $i$ and $j$, then rewrite $\phi$ as follows:
$\phi(\lvert^{\otimes n})=\sum\limits_{k_{m}}a_{k_{1}}y_{k_{1}}+a_{k_{2}}y_{k_{2}}+a_{k_{3}}y_{k_{3}}+a_{k_{4}}y_{k_{4}}+a_{k_{5}}y_{k_{5}}$, where each of the $y_{k_{m}}$ are diagrams of the form as follows: $y_{k_{1}}$ are diagrams with a simple cup at the $i$th position but not at the $j$th position. $y_{k_{2}}$ are diagrams with a simple cup at the $j$th position but not at the $i$th position. $y_{k_{3}}$ are diagrams with simple cups at both the $i$th and $j$th position. $y_{k_{4}}$ are diagrams of the form:
\begin{figure}[H]
	\centering
	\includegraphics[width=0.3\linewidth]{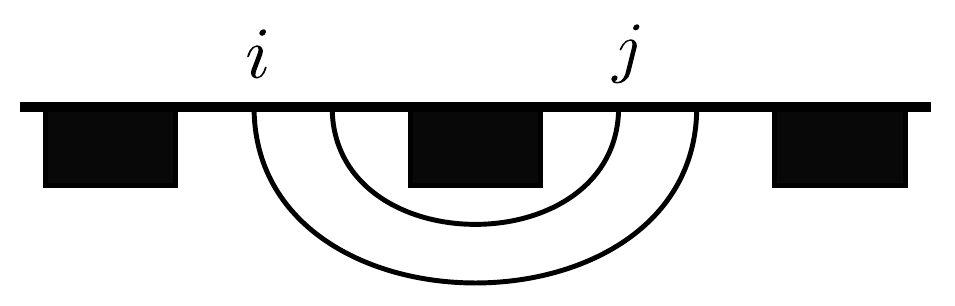}
\end{figure}
The $y_{k_{5}}$ are then the diagrams not appearing in any of the previous sums. Under this relabelling, we have $\tilde{e}_{i}\lvert^{\otimes n}=\sum\limits_{k_{m}}a_{k_{1}}y_{k_{1}}+a_{k_{3}}y_{k_{3}}$, $\tilde{e}_{j}\lvert^{\otimes n}=\sum\limits_{k_{m}}=a_{k_{2}}y_{k_{2}}+a_{k_{3}}y_{k_{3}}$. We want to show that $e_{j}\tilde{e}_{i}\lvert^{\otimes n}=e_{i}\tilde{e}_{j}\lvert^{\otimes n}$, which we can now rewrite as $\sum\limits_{k_{1}}a_{k_{1}}e_{j}y_{k_{1}}=\sum\limits_{k_{2}}a_{k_{2}}e_{i}y_{k_{2}}$. We now note that $e_{i}y_{k_{4}}=e_{j}y_{k_{4}}\in y_{k_{3}}$, and the $y_{k_{4}}$ are the only diagrams without a simple cup at $i$ such that $e_{j}y_{k_{4}}$ has a simple cup at $i$, and respectively for $i$ and $j$ switched.
Next we consider $e_{i}\phi(\lvert^{\otimes n})=0$, in terms of the sum, this gives:
\begin{align*}
&\sum\limits_{k_{m}}\delta a_{k_{1}}y_{k_{1}}+a_{k_{2}}e_{i}y_{k_{2}}+\delta a_{k_{3}}y_{k_{3}}+a_{k_{4}}e_{i}y_{k_{4}}+a_{k_{5}}e_{i}y_{k_{5}}=0
\end{align*} 
By separating into diagrams with or without a simple cup at the $j$th position, we get:
\begin{align*}
&\sum\limits_{k_{m}}a_{k_{2}}e_{i}y_{k_{2}}+\delta a_{k_{3}}y_{k_{3}}+a_{k_{4}}e_{i}y_{k_{4}}=0
\end{align*}
Repeating for $e_{j}\phi(\lvert^{\otimes n})$, we get:
\begin{align*}
&\sum\limits_{k_{m}}a_{k_{1}}e_{j}y_{k_{1}}+\delta a_{k_{3}}y_{k_{3}}+a_{k_{4}}e_{j}y_{k_{4}}=0
\end{align*}
As $\sum\limits_{k_{4}}a_{k_{4}}e_{i}y_{k_{4}}=\sum\limits_{k_{4}}a_{k_{4}}e_{j}y_{k_{4}}$, this gives $\sum\limits_{k_{2}}a_{k_{2}}e_{i}y_{k_{2}}=\sum\limits_{k_{1}}a_{k_{1}}e_{j}y_{k_{1}}$, and hence the relation holds.\\

To show that the extension doesn't split, we again consider the case $\mathcal{V}_{n+1,p}\rightarrow P_{n+1,0}\rightarrow\mathcal{V}_{n+1,0}$, and show there is no injective map $\mathcal{V}_{n+1,0}\rightarrow P_{n+1,0}$, with the general case following from this. We start with the case $p=1$. Note that the map $\phi:\mathcal{V}_{n,0}\rightarrow\mathcal{V}_{n,1}$ can be written as $\phi(\lvert^{\otimes n})=\sum\limits_{i=1}^{n-1}(-1)^{i+1}[i]c_{i}$, where $c_{i}$ is the diagram with a single cup at the $i$th position, so $\tilde{e}_{i}\lvert^{\otimes n}=(-1)^{i+1}[i]c_{i}$ in $ P_{n+1,0}$. Assume there is some non-zero map $\Psi:\lvert^{\otimes n+1}\mapsto\tilde{\lvert}^{\otimes n+1}+\sum\limits_{i=1}^{n} b_{i}c_{i}$, where we use $\tilde{\lvert}^{\otimes n+1}$ to denote $\lvert^{\otimes n+1}\in P_{n+1,0}$, $b_{i}\in\mathbb{C}$. By considering the action of $e_{j}$ on $\Psi$ for each $j$, we find that the coefficients $b_{i}$ must satisfy the following:
	\begin{align*}
	1+\delta b_{1}+b_{2}&=0,& (-1)^{i+1}[i]+b_{i-1}+\delta b_{i}+b_{i+1}&=0,& (-1)^{n+1}[n]+b_{n-1}+\delta b_{n}&=0
	\end{align*}
The last equation can be considered as the second with the extra condition $b_{n+1}=0$. By induction, we find the following general formula for $b_{k}$:
\begin{align*}
b_{k}&=(-1)^{k+1}\left(\sum\limits_{i=0}^{\lfloor\frac{k-1}{2}\rfloor}(k-2i-1)[k-2i-1]\right)+(-1)^{k+1}[k]b_{1}
\end{align*}
The condition $b_{n+1}=0$ then gives:
\begin{align*}
b_{1}&=-\sum\limits_{i=0}^{\lfloor\frac{n}{2}\rfloor}\frac{(n-2i)[n-2i]}{[n+1]}
\end{align*}
For $(n+1,0),(n+1,1)$ to be a symmetric pair, we must have $n+1=kl$, and so $[n+1]=0$. Hence there is no solution for the coefficients, unless they can be simplified so that $[n+1]$ doesn't appear on the denominator. Using the solution for $b_{1}$, we get that $b_{n}$ is of the form:
\begin{align*}
b_{n}&=\frac{(-1)^{n}}{[n+1]}\left(\sum\limits_{i=0}^{\lfloor\frac{n}{2}\rfloor}(n-2i)[n-2i][n]-\sum\limits_{j=0}^{\lfloor\frac{n-1}{2}\rfloor}(n-2j-1)[n-2j-1][n+1]\right)
\end{align*}
By using the relations $[n-2i][n]-[n-2i-1][n+1]=[2i+1]$, and $\sum\limits_{i=0}^{j}[2i+1]=[j+1]^{2}$, this simplifies to give:
\begin{align*}
b_{n}&=\frac{(-1)^{n}}{[n+1]}\left(\sum\limits_{i=1}^{n}[i]^{2}\right)
\end{align*}
At roots of unity, we have $[k]\in\mathbb{R}$, and so $\sum\limits_{i=1}^{n}[i]^{2}>0$. Hence $b_{n}$ can't be simplified to remove $[n+1]$, and so the coefficients have no solution.\\

For $p>1$, we first need the following lemma:
\begin{lemma}
Given the maps $\phi_{p}:\mathcal{V}_{n,0}\rightarrow\mathcal{V}_{n,p}$, $\phi_{p-1}:\mathcal{V}_{n-1,0}\rightarrow\mathcal{V}_{n-1,p-1}$. Let $\bar{\phi}_{p}\in\mathcal{V}_{n-1,p-1}$ be formed from the image of $\phi_{p}$ by taking only terms with a cup at the $n$th point, and deleting the $n$th point. Then $\bar{\phi}_{p}=\pm\phi_{p-1}(\lvert^{\otimes n-1})$.
\end{lemma}
\begin{proof}
Acting $e_{i}$, $1\leq i\leq n-2$, on the link states contributing to $\bar{\phi}_{p}$, we get either zero or a sum of link states with a string at the $n$th point. If the result has a string at the $n$th point, then deleting the $n$th point results in link states with $p$ cups which are zero in $\mathcal{V}_{n-1,p-1}$. Hence $e_{i}\bar{\phi}_{p}=0$ for $1\leq i\leq n-2$, and so $\bar{\phi}_{p}\in\mathcal{R}_{n-1,p-1}$. As $\mathcal{R}_{n-1,p-1}$ is the image of $\phi_{p-1}$, it is one-dimensional, hence we have $\bar{\phi}_{p}=\lambda\phi_{p-1}(\lvert^{\otimes n-1})$ for some non-zero constant $\lambda$. To find $\lambda$, consider the coefficients in $\phi_{p}$, $\phi_{p-1}$, respectively of the following two diagrams:
\begin{figure}[H]
 	\centering
 	\includegraphics[width=0.5\linewidth]{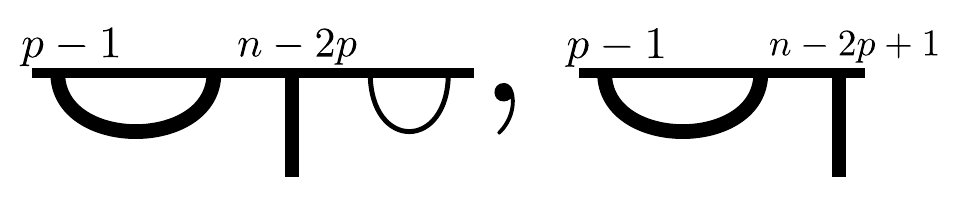}
\end{figure}	
The coefficients of these diagrams are $\frac{(-1)^{n-p+1}}{[n-p]}$, $1$. Hence we must have $\bar{\phi}_{p}=\frac{(-1)^{n-p+1}}{[n-p]}\phi_{p-1}(\lvert^{\otimes n-1})$. For $(n,0),(n,p)$ to be a symmetric pair, we must have $n-p+1=kl$, and so $[n-p]=\pm 1$.
\end{proof}	
To show the extension doesn't split for $p>1$, assume there is some minimal $p$ such that the map $\Psi_{p}:\mathcal{V}_{n,0}\rightarrow P_{n,p}$ exists. Let $\{y_{i}\}$ be the link states appearing in the image of $\Psi_{p}$ that have their $n$th point connected to a cup, and let $\{b_{i}\}$ be the corresponding coefficients of the link states. Denote by $\downarrow y_{i}$ the link state in $\mathcal{V}_{n-1,p-1}$ formed from $y_{i}$ by cutting the cup connected to the $n$th point and then removing the $n$th point. Consider the map $\theta_{p-1}:\lvert^{\otimes n-1}\mapsto \tilde{\lvert}^{\otimes n-1}\pm\sum b_{i}\downarrow y_{i}$, it follows from the previous lemma that if $\Psi_{p}$ exists, then $\theta_{p-1}$ exists, and defines a map $\theta_{p-1}:\mathcal{V}_{n-1,0}\rightarrow P_{n-1,p-1}$, but repeating this, we get a map $\theta_{1}:\mathcal{V}_{n-p+1,0}\rightarrow P_{n-p+1,0}$, which we have shown doesn't exist. Hence $\Psi_{p}$ doesn't exist, and so the extension doesn't split. As the only non-split extension of symmetric pairs of standard representations is the projective indecomposable representation, we find $P_{n,p}\simeq\mathcal{P}_{n,p}$.

\end{proof}
We can use this extension to construct a second extension of representations.
\begin{corr}
Let $(n,p^{\prime}),(n,p^{\prime\prime})$ and $(n,p),(n,p^{\prime})$ be symmetric pairs, with $p^{\prime\prime}>p^{\prime}>p$, and $\phi:\mathcal{V}_{n,p}\rightarrow\mathcal{V}_{n,p^{\prime}}$. Then there is a non split extension $V_{n,p^{\prime\prime}}\rightarrow \mathcal{Q}_{n,p}\rightarrow\mathcal{V}_{n,p}$, given by $\bar{e}_{i}x:=\tilde{e}_{i}\phi(x)$, $x\in\mathcal{V}_{n,p}$.
\end{corr}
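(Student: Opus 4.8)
The plan is to establish two things: that the prescription $\bar{e}_{i}x:=\tilde{e}_{i}\phi(x)$ — read as the ordinary action of $e_{i}$ on $\mathcal{V}_{n,p}$ whenever $e_{i}$ does not raise the cup number of $x$, the ordinary action on $\mathcal{V}_{n,p''}$, and $\tilde{e}_{i}\phi(x)$ otherwise — really defines a $TL_{n}$-representation $\mathcal{Q}_{n,p}$ fitting in $0\to\mathcal{V}_{n,p''}\to\mathcal{Q}_{n,p}\to\mathcal{V}_{n,p}\to0$, and that this sequence does not split. For the first point I would avoid re-checking the defining relations and instead realize $\mathcal{Q}_{n,p}$ as the pullback of the representation $P_{n,p'}\simeq\mathcal{P}_{n,p'}$ (Definition \ref{Extensions} together with the preceding theorem, so that its generators $\tilde{e}_{i}$ are exactly those in the statement) along the $TL_{n}$-morphism $\phi:\mathcal{V}_{n,p}\to\mathcal{V}_{n,p'}$: writing $\pi:P_{n,p'}\to\mathcal{V}_{n,p'}$ for the quotient by the submodule $\mathcal{V}_{n,p''}$, put $\mathcal{Q}_{n,p}:=\{(v,x)\in P_{n,p'}\oplus\mathcal{V}_{n,p}:\pi(v)=\phi(x)\}$. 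Being a fibre product of $TL_{n}$-modules this is automatically a $TL_{n}$-module, and it is an extension of $\mathcal{V}_{n,p}$ by $Ker(\pi)=\mathcal{V}_{n,p''}$; via the vector-space splitting $P_{n,p'}=\mathcal{V}_{n,p'}\oplus\mathcal{V}_{n,p''}$ one identifies $\mathcal{Q}_{n,p}\simeq\mathcal{V}_{n,p}\oplus\mathcal{V}_{n,p''}$ and reads off precisely the stated action. The only subtlety is that when $e_{i}$ raises the cup number of a link state $x\in\mathcal{V}_{n,p}$ the element $\tilde{e}_{i}\phi(x)$ really lies in $\mathcal{V}_{n,p''}\subseteq P_{n,p'}$, so that the formula is meaningful: since $\phi$ is a morphism, $e_{i}\phi(x)=\phi(e_{i}x)=\phi(0)=0$ in $\mathcal{V}_{n,p'}$, which forces the $\mathcal{V}_{n,p'}$-component of $\tilde{e}_{i}\phi(x)$ to vanish; conversely, when $e_{i}$ does not raise the cup number of $x$ it raises the cup number of no term of $\phi(x)$, so there $\tilde{e}_{i}$ acts as plain $e_{i}$ and the action restricts to the ordinary one on $\mathcal{V}_{n,p}$. (Equivalently one could verify $e_{i}^{2}=\delta e_{i}$, $e_{i}e_{i\pm1}e_{i}=e_{i}$, $e_{i}e_{j}=e_{j}e_{i}$ directly, which then reduce to the relations in $P_{n,p'}$ plus the vanishing just noted.)

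For non-splitness it suffices, since $Ext(\mathcal{V}_{n,p},\mathcal{V}_{n,p''})\simeq\mathbb{C}$ by Proposition \ref{Ext of TL_n}, to show the class $[\mathcal{Q}_{n,p}]\in Ext(\mathcal{V}_{n,p},\mathcal{V}_{n,p''})$ is nonzero. By the pullback description $[\mathcal{Q}_{n,p}]=\phi^{*}[P_{n,p'}]$, where $[P_{n,p'}]\in Ext(\mathcal{V}_{n,p'},\mathcal{V}_{n,p''})$ is nonzero by the preceding theorem, so I would show that $\phi^{*}$ is injective. Factor $\phi$ through $\mathcal{L}_{n,p}\simeq\mathcal{R}_{n,p'}$ as $\mathcal{V}_{n,p}\xrightarrow{\rho}\mathcal{L}_{n,p}\xrightarrow{\psi}\mathcal{R}_{n,p'}\xrightarrow{\iota}\mathcal{V}_{n,p'}$, with $\rho$ the quotient, $\psi$ the isomorphism and $\iota$ the inclusion, so that $\phi^{*}=\rho^{*}\circ\psi^{*}\circ\iota^{*}$. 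Here $\psi^{*}$ is an isomorphism; $\iota^{*}$ is injective by the long exact sequence of $Ext(-,\mathcal{V}_{n,p''})$ attached to $0\to\mathcal{R}_{n,p'}\xrightarrow{\iota}\mathcal{V}_{n,p'}\to\mathcal{L}_{n,p'}\to0$, provided $Ext(\mathcal{L}_{n,p'},\mathcal{V}_{n,p''})=0$; and $\rho^{*}$ is injective by the same sequence attached to $0\to\mathcal{R}_{n,p}\to\mathcal{V}_{n,p}\xrightarrow{\rho}\mathcal{L}_{n,p}\to0$, provided $Hom(\mathcal{R}_{n,p},\mathcal{V}_{n,p''})=0$. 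Both hypotheses hold: $Hom(\mathcal{R}_{n,p},\mathcal{V}_{n,p''})=0$ because $\mathcal{R}_{n,p}$ is irreducible while the socle of $\mathcal{V}_{n,p''}$ is $\mathcal{R}_{n,p''}\simeq\mathcal{L}_{n,p'}$, which has a label different from $\mathcal{R}_{n,p}$'s; and $Ext(\mathcal{L}_{n,p'},\mathcal{V}_{n,p''})=0$ because this group does not occur among the non-trivial extensions of Proposition \ref{Ext of TL_n} — the two bullets that could produce a nonzero $Ext(\mathcal{L}_{n,a},\mathcal{V}_{n,b})$ require either a chain $a<c<b$ of three consecutive symmetric-pair labels, or $(n,a),(n,b)$ a symmetric pair in which $a$ has no lower symmetric partner, and neither holds for $(a,b)=(p',p'')$ precisely because $(n,p),(n,p')$ is a symmetric pair with $p<p'$. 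Hence $\phi^{*}$ is injective, $[\mathcal{Q}_{n,p}]\neq0$, and the extension is non-split.

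I expect the non-splitness to be the delicate point: it rests entirely on correctly reading the vanishing $Ext(\mathcal{L}_{n,p'},\mathcal{V}_{n,p''})=0$ out of Proposition \ref{Ext of TL_n} (taking care which bullet's hypotheses actually apply) and on the identification $[\mathcal{Q}_{n,p}]=\phi^{*}[P_{n,p'}]$, whose only non-formal ingredient is $e_{i}\phi(x)=\phi(e_{i}x)$. If one prefers an argument relying only on the preceding theorem and no $Ext$ classification, one can instead reduce to the case $p=0$ by adjoining cups on both sides as in that proof, assume a $TL_{n}$-linear section $\lvert^{\otimes n}\mapsto\tilde{\lvert}^{\otimes n}+\sum_{i}b_{i}y_{i}$ with the $y_{i}$ ranging over the $(n,p'')$-link states, translate the constraints each $e_{j}$ imposes into a linear recursion for the $b_{i}$, and show — as in the proof of the preceding theorem, now for the twisted action $\tilde{e}_{i}\circ\phi$ and with general $p''$ handled by the same ``delete the last point'' induction — that the unique solution forces into a denominator the quantum integer that vanishes at the symmetric pair, a contradiction.
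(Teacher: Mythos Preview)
The paper states this corollary without proof, treating it as an immediate consequence of the preceding theorem and Definition~\ref{Extensions}. Your argument is correct and supplies what the paper omits: realizing $\mathcal{Q}_{n,p}$ as the pullback of $P_{n,p'}$ along $\phi$ is a clean way to see that the prescription defines a genuine $TL_{n}$-module (and your check that $\tilde{e}_{i}\phi(x)\in\mathcal{V}_{n,p''}$ precisely when $e_{i}$ raises the cup number of $x$, together with the observation that $\phi$ only turns strings into cup-ends so $\tilde{e}_{i}$ acts ordinarily otherwise, confirms the pullback matches the stated formula). For non-splitness, your factorization $\phi=\iota\circ\psi\circ\rho$ and the two long-exact-sequence arguments are correct; the key vanishing $Ext(\mathcal{L}_{n,p'},\mathcal{V}_{n,p''})=0$ is indeed read off Proposition~\ref{Ext of TL_n} exactly as you say (the third bullet fails since $(n,p'),(n,p'')$ are already adjacent, and the fourth fails since $p'$ has the lower partner $p$), and $Hom(\mathcal{R}_{n,p},\mathcal{V}_{n,p''})=0$ follows from comparing the irreducible $\mathcal{R}_{n,p}$ with the socle $\mathcal{R}_{n,p''}\simeq\mathcal{L}_{n,p'}$.

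Your approach is genuinely different from anything the paper does explicitly: rather than imitate the direct coefficient-recursion of the preceding theorem (which you sketch as an alternative at the end), you exploit the homological machinery and the Ext classification of Proposition~\ref{Ext of TL_n}. This is more conceptual and avoids any computation, at the cost of relying on that classification as a black box; the paper's implicit stance is presumably that once $P_{n,p'}$ is known to be non-split, composing with $\phi$ ``obviously'' preserves this, which is exactly what your pullback argument makes precise.
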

It follows immediately from the above construction, that given infinite link states $w,w^{\prime},w^{\prime\prime}$ that differ at only finitely many points, and such that $s(w)>s(w^{\prime})>s(w^{\prime\prime})$ are both finite and form two symmetric pairs, there are non-split extensions of representations $\mathcal{X}(w^{\prime})\rightarrow \mathcal{P}(w)\rightarrow\mathcal{X}(w)$, $\mathcal{X}(w^{\prime\prime})\rightarrow \mathcal{Q}(w)\rightarrow\mathcal{X}(w)$.
\begin{prop}
Given infinite link states $w$, $z$ that differ at only finitely many points with $s(w),s(z)$ finite. Then the above two extensions are the only possible extensions of $\mathcal{X}(w)$, $\mathcal{X}(z)$.
\end{prop}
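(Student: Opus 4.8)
We sketch the argument; the plan is to reduce the classification to the finite algebras, where all extensions of standard modules are known (Proposition \ref{Ext of TL_n}), exploiting that restriction to $TL_n$ is exact together with the structure theory obtained above. Let $0\to\mathcal{X}(z)\to E\xrightarrow{\pi}\mathcal{X}(w)\to 0$ be a non-split short exact sequence of $TL_\infty$-modules; we must show $E\cong\mathcal{P}(w)$ or $E\cong\mathcal{Q}(w)$ --- in particular that $s(z)<s(w)$ and that $z$ is the link state $w'$, resp.\ $w''$, of the preceding discussion. Replacing $w,z$ by equivalent link states we may assume all their cups lie among the first $m$ points; then for every $n\ge m$ of the appropriate parity the $TL_n$-submodules $TL_n\cdot w\subseteq\mathcal{X}(w)$ and $TL_n\cdot z\subseteq\mathcal{X}(z)$ are genuine standard modules $\mathcal{V}_{n,p_w}$, $\mathcal{V}_{n,p_z}$ with $n-2p_w=s(w)$, $n-2p_z=s(z)$, and $\mathcal{X}(w)$, $\mathcal{X}(z)$ are their increasing unions as vector spaces.

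The first step is a splitting-detection lemma: a non-split $TL_\infty$-extension of $\mathcal{X}(w)$ by $\mathcal{X}(z)$ restricts to a non-split $TL_n$-extension for all large $n$. The point is that any $TL_\infty$-splitting of $E$ is determined by the image of the cyclic generator $w$ of $\mathcal{X}(w)$, and the relations it must satisfy each involve only finitely many generators, hence live in a single $TL_n$, so a compatible family of $TL_n$-splittings would glue to a $TL_\infty$-splitting. Granting this, restrict the sequence to $TL_n$ for $n\ge m$. Since $\mathcal{X}(w)|_{TL_n}$ and $\mathcal{X}(z)|_{TL_n}$ are direct sums of copies of $\mathcal{V}_{n,p_w}$, resp.\ $\mathcal{V}_{n,p_z}$, indexed by the configuration beyond point $n$, the group $Ext^{1}_{TL_n}(\mathcal{X}(w)|_{TL_n},\mathcal{X}(z)|_{TL_n})$ is a product of copies of $Ext^{1}_{TL_n}(\mathcal{V}_{n,p_w},\mathcal{V}_{n,p_z})$, which must therefore be non-zero. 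By Proposition \ref{Ext of TL_n} this forces, for all large $n$, either $p_w<p_z$ with $(n,p_w),(n,p_z)$ a symmetric pair, or $p_w<p''<p_z$ with $(n,p_w),(n,p'')$ and $(n,p''),(n,p_z)$ both symmetric pairs. As symmetric pairs are stable under adjoining a cup to both labels (cf.\ the proof of Lemma \ref{radical preserved}), these conditions are independent of $n$ and say exactly that $w,z$ form a symmetric pair (so $z=w'$), resp.\ that there is an infinite link state $w'$ with $s(z)<s(w')<s(w)$ forming two symmetric pairs (so $z=w''$). In particular $E$ has the same submodule and quotient as $\mathcal{P}(w)$, resp.\ $\mathcal{Q}(w)$.

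It then suffices to show $Ext^{1}_{TL_\infty}(\mathcal{X}(w),\mathcal{X}(z))$ is one-dimensional in these two cases, for then $[E]$ is a non-zero scalar multiple of $[\mathcal{P}(w)]$, resp.\ $[\mathcal{Q}(w)]$, and so $E$ is isomorphic to the corresponding extension; and when neither symmetric-pair condition holds the same reasoning yields $Ext^{1}_{TL_\infty}(\mathcal{X}(w),\mathcal{X}(z))=0$, with no self-extensions either since $Ext^{1}_{TL_n}(\mathcal{V}_{n,p},\mathcal{V}_{n,p})=0$. For the one-dimensionality I would use the natural map $Ext^{1}_{TL_\infty}(\mathcal{X}(w),\mathcal{X}(z))\to\varprojlim_{n}Ext^{1}_{TL_n}(\mathcal{V}_{n,p_w},\mathcal{V}_{n,p_z})$ obtained by restricting and projecting onto the summand carrying the tails of $w$ and of $z$: it is injective by the detection argument applied summand-wise, each term of the inverse limit is one-dimensional by Proposition \ref{Ext of TL_n}, and the transition maps are isomorphisms because the relevant summand of $\mathcal{P}_{n+1,p_w}|_{TL_n}$ is $\mathcal{P}_{n,p_w}$ --- hence the target, and therefore the source, is one-dimensional.

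The main obstacle is this passage from the finite algebras to $TL_\infty$: both the splitting-detection lemma and the identification of the transition maps $Ext^{1}_{TL_{n+1}}\to Ext^{1}_{TL_n}$ as isomorphisms (equivalently, that projective covers restrict to projective covers on the relevant summand) require genuine work, since $E$ and the relevant $Ext$ spaces are infinite-dimensional. Secondary points are the bookkeeping of how $\mathcal{X}(w)|_{TL_n}$ decomposes once a cup straddles the point $n$, and checking that $\mathcal{Q}(w)$ --- which, unlike $\mathcal{P}(w)$, need not be cyclic --- is exactly the pushout extension one recovers.
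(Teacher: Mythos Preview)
Your approach is the same as the paper's --- reduce to the finite case by restriction and invoke the known classification of extensions of $TL_n$ standard modules --- but you have built substantially more machinery around it than the paper does. The paper's argument is essentially a one-paragraph sketch: given a non-split extension $\mathcal{X}(w)\to M\to\mathcal{X}(z)$, choose a single relation $x\cdot z'=w'$ witnessing the non-splitness (with $x\in TL_\infty$, $z'$ a lift of an element of $\mathcal{X}(z)$, $w'\in\mathcal{X}(w)$), observe that $x$ involves only finitely many generators, restrict everything to some $TL_n$, and conclude that the restricted extension is either split (forcing $M$ to split) or one of the two known $TL_n$ extensions (forcing $M$ to be $\mathcal{P}(w)$ or $\mathcal{Q}(w)$).

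Your version is more careful: you set up the splitting-detection lemma explicitly, organise the restriction via a decomposition of $\mathcal{X}(w)|_{TL_n}$ into standard summands indexed by tails, and pass through an inverse limit $\varprojlim_n Ext^1_{TL_n}(\mathcal{V}_{n,p_w},\mathcal{V}_{n,p_z})$ to pin down one-dimensionality. The obstacles you flag --- that the transition maps in the inverse system must be checked to be isomorphisms, and that the tail-indexed decomposition needs bookkeeping when a cup straddles point $n$ --- are real, and the paper's proof does not address them either; it simply asserts that ``the restriction is the extension of standard representations defined above, and so the extension $M$ is the same construction.'' In that sense your proposal is a more honest account of what needs to be done, while the paper treats the passage from finite to infinite as self-evident once a single witnessing relation has been localised.
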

\begin{proof}
Assume otherwise, so that there are link states $w,z$ with a non-split extension $\mathcal{X}(w)\rightarrow M\rightarrow\mathcal{X}(z)$. As $w$ and $z$ differ at only finitely many points, let $xz^{\prime}=w^{\prime}$ be a relation in the extension, with $x\in TL_{\infty}$, and $w^{\prime}$, $z^{\prime}$ viewed as elements of $\mathcal{X}(w)$, $\mathcal{X}(z)$ respectively. We can then restrict $w^{\prime},z^{\prime}$ to finite n-link states and $x$ to a $TL_{n}$ element, for some $n$. As $s(w)$ is finite, then either the restriction splits, and so does the extension $M$, or else the restriction is the extension of standard representations defined above, and so the extension $M$ is the same construction.
\end{proof}
\begin{remark}
We have only considered extensions of link state representations with finitely many strings. However the case when there are infinitely many strings is also non-trivial. For example, Let $w_{i}$ be the infinite link state, up to equivalence, with $i$ cups and infinitely many strings. We can construct an extension $\mathcal{X}(w_{1})\rightarrow M\rightarrow\mathcal{X}(w_{0})$, by setting $e_{j}w_{0}:=c_{j}$, where $c_{j}$ is the infinite link state with a single cup at the $i$th position. For $TL_{n}$, this generally splits, but for $TL_{\infty}$, this extension is non-split for all $q$, as defining an injective map $\mathcal{X}(w_{0})\rightarrow M$ would require a sum with infinitely many terms. It gets more complicated when we consider $\mathcal{X}(w_{2})\rightarrow M \rightarrow\mathcal{X}(w_{0})$. Let $c_{i,j}$ be the diagram given as follows:
\begin{figure}[H]
	\centering
	\includegraphics[width=1\linewidth]{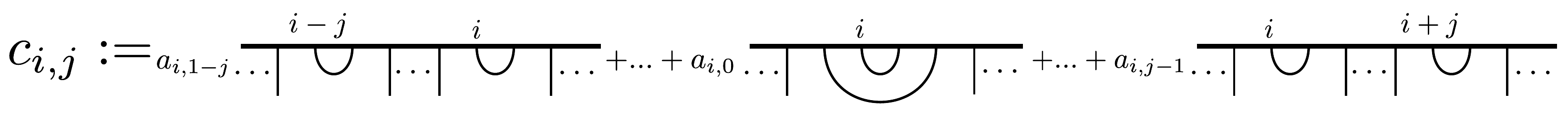}
\end{figure}
Then setting $e_{i}w_{0}:=c_{i,j}$, with the condition $e_{i}e_{k}w_{0}=e_{k}e_{i}w_{0}$ for $\lvert i-k\rvert>1$ on the coefficients $a_{i,j}\in\mathbb{C}$, this defines an extension for each $j\geq 1$. By considering the action of $e_{j+1}e_{i}$ for different extensions, it suggests that these may not be isomorphic for different $j$. Hence in general extensions of link state representations with infinitely many strings appears to be more complicated. We leave a full discussion of their extensions for future research.
\end{remark}

\subsection{A bilinear form on the Spin Chain Representation}\label{spin chain}
A standard result due to Jimbo and Martin \cite{Jimbo, MartinSW} is that $TL_{n}(q+q^{-1})$ is in Schur-Weyl duality with $U_{q}(\mathfrak{sl}_{2})$ over the spin-chain representation $(\mathbb{C}^{2})^{\otimes n}$. $\mathbb{C}^{2}$ is viewed as the 2-dimensional $U_{q}(\mathfrak{sl}_{2})$ representation $X_{1}:=\{\nu_{-1},\nu_{1}\}$, with action $K\nu_{i}=q^{-i}\nu_{i}$, $E\nu_{-1}=0$, $E\nu_{1}=\nu_{-1}$, $F\nu_{-1}=\nu_{1}$, $F\nu_{1}=0$. The $U_{q}(\mathfrak{sl}_{2})$ action on $(\mathbb{C}^{2})^{\otimes n}$ is then given by use of the coproduct. The Temperley-Lieb action commuting with the $U_{q}(\mathfrak{sl}_{2})$ action is given by $e(\nu_{-1}\otimes\nu_{-1})=e(\nu_{1}\otimes\nu_{1})=0$, $e(\nu_{-1}\otimes\nu_{1})=q\nu_{-1}\otimes\nu_{1}-\nu_{1}\otimes\nu_{-1}$, $e(\nu_{1}\otimes\nu_{-1})=q^{-1}\nu_{1}\otimes\nu_{-1}-\nu_{-1}\otimes\nu_{1}$. For $q$ generic, the spin chain decomposes as a $U_{q}(\mathfrak{sl}_{2})\otimes TL_{n}(q+q^{-1})$ bimodule as follows:
\begin{align*}
(\mathbb{C}^{2})^{\otimes n}\simeq \bigoplus\limits_{\substack{i=0,\\ i=n+1\mod 2}}^{n+1}X_{i}\otimes\mathcal{V}_{n,\frac{n-i+1}{2}}
\end{align*}
where $X_{i}$ is the irreducible $(i+1)$-dimensional $U_{q}(\mathfrak{sl}_{2})$ representation with basis $X_{i}:=\{\nu_{j}:-i\leq j\leq i, j=i\mod 2\}$, $K\nu_{j}=q^{-j}\nu_{j}$, $E\nu_{j}=[\frac{i-j}{2}+1]\nu_{j-2}$, $E\nu_{-i}=0$, $F\nu_{j}=[\frac{i+j}{2}+1]\nu_{j+2}$, $F\nu_{i}=0$.
There is a $U_{q}(\mathfrak{sl}_{2})\otimes TL_{n}(q+q^{-1})$ invariant orthogonal bilinear form on the spin chain defined as follows:
\begin{align*}
\langle \nu_{i_{1}}\otimes...\otimes\nu_{i_{n}},\nu_{j_{1}}\otimes...\otimes\nu_{j_{n}}\rangle:=q^{\#(\nu_{-1})\#(\nu_{1})}\delta_{i_{1},j_{1}}...\delta_{i_{n},j_{n}}
\end{align*}
where $\#(\nu_{\pm 1})$ is the number of $\nu_{\pm 1}$'s appearing in $\nu_{i_{1}}\otimes...\otimes\nu_{i_{n}}$, and $i_{k},j_{k}\in\{\pm 1\}$. We note that the determinant of the Gram matrix of this bilinear form being non-zero doesn't prove semisimplicity of the spin chain representation. This is due to the failure of lemma $3.2$ of \cite{RSA} to generalize. For example, take the basis $\{\nu_{-1}\otimes\nu_{-1},\nu_{-1}\otimes\nu_{1},q^{-1}\nu_{-1}\otimes\nu_{1}+\nu_{1}\otimes\nu_{-1},\nu_{1}\otimes\nu_{1}\}$ of $(\mathbb{C}^{2})^{\otimes 2}$ with $q=\pm i$. Then the determinant of the Gram matrix with respect to this basis is $q^{2}$. However taking $x:=\nu_{-1}\otimes\nu_{1}$, $y:=q^{-1}\nu_{-1}\otimes\nu_{1}+\nu_{1}\otimes\nu_{-1}$, we have $\langle x,y\rangle x=x$, but there is no $U_{q}(\mathfrak{sl}_{2})\otimes TL_{n}(q+q^{-1})$ element that maps $y\mapsto x$.  
The relationship between the above bilinear form and the one on the standard representations is given by the following:
\begin{prop}
The restriction of the bilinear form on the spin chain to a standard representation induces the bilinear form on the standard representation.
\end{prop}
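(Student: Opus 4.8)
The plan is to realise $\mathcal{V}_{n,p}$ as a $TL_{n}$-submodule of the spin chain $(\mathbb{C}^{2})^{\otimes n}$, observe that the spin chain form restricted to this copy is $TL_{n}$-invariant, and deduce by uniqueness of the invariant form on $\mathcal{V}_{n,p}$ (up to scalar) that it is a nonzero multiple of $\langle\cdot,\cdot\rangle_{n,p}$ --- which is what ``induces'' means here, and which forces the two forms to have the same radical and to induce the same form on $\mathcal{L}_{n,p}$. Concretely, fix the embedding $\iota_{p}\colon\mathcal{V}_{n,p}\hookrightarrow(\mathbb{C}^{2})^{\otimes n}$ sending a link state to the vector obtained by placing the $U_{q}(\mathfrak{sl}_{2})$-singlet in the two slots joined by each cup and $\nu_{1}$ in every defect slot; for a simple cup at position $i$ the singlet is $\nu_{-1}\otimes\nu_{1}-q^{-1}\nu_{1}\otimes\nu_{-1}$, which spans $\mathrm{im}\,e$ on $\mathbb{C}^{2}\otimes\mathbb{C}^{2}$, and nested or non-adjacent cups are filled in recursively by peeling off simple cups. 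One checks $\iota_{p}$ intertwines $e_{i}$ with the spin chain action of $TL_{n}$, so its image is a submodule isomorphic to $\mathcal{V}_{n,p}$, and the restriction of the invariant spin chain form to it is a $TL_{n}$-invariant bilinear form on $\mathcal{V}_{n,p}$.

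Next I would use that the space of $TL_{n}$-invariant bilinear forms on $\mathcal{V}_{n,p}$ is at most one-dimensional: such a form is a $TL_{n}$-map $\mathcal{V}_{n,p}\to\mathcal{V}_{n,p}^{*}$, and its image, being a quotient of $\mathcal{V}_{n,p}$, is one of $0$, $\mathcal{L}_{n,p}$, $\mathcal{V}_{n,p}$ (there are at most two composition factors); in each case the map is unique up to scalar, using that $\mathcal{L}_{n,p}$ is self-dual and occurs with multiplicity one in the head of $\mathcal{V}_{n,p}$ and in the socle of $\mathcal{V}_{n,p}^{*}$, and that $\mathrm{End}(\mathcal{V}_{n,p})\simeq\mathbb{C}$ by Proposition \ref{maps}. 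Hence the restricted form equals $\lambda\,\langle\cdot,\cdot\rangle_{n,p}$ for some scalar $\lambda$, and it only remains to check $\lambda\neq0$.

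For this I would evaluate both forms on the link state $v_{0}\in\mathcal{V}_{n,p}$ with cups $(1,2),(3,4),\dots,(2p-1,2p)$ and defects elsewhere. Stacking $\bar v_{0}$ on $v_{0}$ produces $p$ closed loops and $n-2p$ through-lines, so $\langle v_{0},v_{0}\rangle_{n,p}=\delta^{p}$. On the other hand $\iota_{p}(v_{0})=(\nu_{-1}\otimes\nu_{1}-q^{-1}\nu_{1}\otimes\nu_{-1})^{\otimes p}\otimes\nu_{1}^{\otimes(n-2p)}$, and every monomial appearing in its expansion has exactly $p$ factors $\nu_{-1}$ and $n-p$ factors $\nu_{1}$, so the weight $q^{\#(\nu_{-1})\#(\nu_{1})}=q^{p(n-p)}$ factors out and one finds $\langle\iota_{p}(v_{0}),\iota_{p}(v_{0})\rangle=q^{p(n-p)}(1+q^{-2})^{p}=q^{p(n-p-1)}\delta^{p}$; thus $\lambda=q^{p(n-p-1)}\neq0$. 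The exceptional case $\delta=0$, $n=2p$ (where $\langle\cdot,\cdot\rangle_{2p,p}$ is identically zero) is consistent, since then $\lambda\delta^{p}=0$ and the restricted form vanishes as well.

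I expect the main obstacle to lie in the first step: verifying that $\iota_{p}$ is a well-defined $TL_{n}$-module homomorphism, i.e.\ that the recursive assignment of singlets to nested and non-adjacent cups is independent of the order in which simple cups are removed, and that the resulting map really does intertwine the two Temperley--Lieb actions. An alternative that avoids the uniqueness step is to prove $\langle\iota_{p}(x),\iota_{p}(y)\rangle=q^{p(n-p-1)}\langle x,y\rangle_{n,p}$ for all link states $x,y$ directly, by induction on $p$: peel a simple cup of $x$ at position $i$, use self-adjointness of $e_{i}$ for the spin chain form to move it onto $\iota_{p}(y)$, and observe that $e_{i}\iota_{p}(y)$ is again $\iota_{p}$ of a link state (possibly zero, possibly $\delta$ times $\iota_{p}(y)$) in just the way that reproduces the loop-counting rule for $\bar x$ stacked on $y$; this is elementary but needs care with the powers of $q$.
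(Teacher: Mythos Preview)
Your proposal is correct but takes a genuinely different route from the paper. The paper argues directly and diagrammatically: it fixes the embedding sending a cup to the singlet $q^{-1}\nu_{1}\otimes\nu_{-1}-\nu_{-1}\otimes\nu_{1}$ and a string to $\nu_{1}$, and then simply observes that because the spin chain form is diagonal in the tensor basis, a cup paired against two strings (i.e.\ against $\nu_{1}\otimes\nu_{1}$) gives zero, which is exactly the vanishing rule defining $\langle\cdot,\cdot\rangle_{n,p}$. That is the whole argument; no uniqueness statement is invoked.

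Your approach replaces this direct diagrammatic check by the structural fact that $\mathrm{Hom}_{TL_{n}}(\mathcal{V}_{n,p},\mathcal{V}_{n,p}^{*})$ is at most one-dimensional, and then pins down the scalar by evaluating on a single link state. This is a perfectly valid and arguably more robust argument, since it bypasses verifying the loop-counting rule case by case and makes explicit the proportionality constant $q^{p(n-p-1)}$, which the paper suppresses. Conversely, the paper's approach is more elementary and self-contained: it needs no information about $\mathcal{V}_{n,p}^{*}$ or the composition series of standard modules. Your ``alternative'' sketch at the end, peeling off simple cups via self-adjointness of $e_{i}$, is essentially the paper's strategy spelled out inductively. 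One small remark on your uniqueness step: when $\mathcal{R}_{n,p}\neq 0$ the image cannot actually be all of $\mathcal{V}_{n,p}$ (since $\mathcal{V}_{n,p}$ and $\mathcal{V}_{n,p}^{*}$ have different socles), but this does not affect your conclusion.
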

\begin{proof}
We assume that $q$ is generic, as the map $(\mathbb{C}^{2})^{\otimes n}\rightarrow \mathcal{V}_{n,p}$ is independent of $q$. Consider the copy of $\mathcal{V}_{n,p}$ appearing in the highest weight space of $(\mathbb{C}^{2})^{\otimes n}$, i.e. $\nu_{n-2p+1}\otimes\mathcal{V}_{n,p}$. A single cup is given by $\cup:=q^{-1}\nu_{1}\otimes\nu_{-1}-\nu_{-1}\otimes\nu_{1}$, and a string by $\nu_{1}$. It follows that if the cup is connected to two strings then the bilinear form gives zero. Hence the bilinear form is the same as the one on the standard representation. The case for lower weights follows from the commutativity of the Temperley-Lieb generators with the $E$ action.
\end{proof}
Similarly, restricting the bilinear form on the spin chain to one of the $U_{q}(\mathfrak{sl}_{2})$ representations in its decomposition induces a bilinear form on $X_{i}$, given as follows:
\begin{align*}
\langle \nu_{j},\nu_{k}\rangle:=\delta_{j,k}\frac{[i+1]!}{[\frac{i-j}{2}+1]![\frac{i+j}{2}+1]!}
\end{align*}
	
We want to generalize Schur-Weyl duality to the case of $TL_{\infty}$. To do this, we need an appropriate generalization of the spin chain representation. However, simply taking the tensor product of infinitely many copies of $X_{1}$ results in issues with the $U_{q}(\mathfrak{sl}_{2})$ action. Instead we aim to generalize the decomposition of $(\mathbb{C}^{2})^{\otimes n}$. Let $w_{0}$, $w_{1}$ be link states with $s(w_{0})=0$, $s(w_{1})=1$, and let $w_{k}^{(2i)}$, $k=0,1$, $i\in\mathbb{N}$ be a link state up to equivalence that differs from $w_{k}$ at only finitely many points and $s(w_{k}^{(2i)})=k+2i$. The spin chain $(\mathbb{C}^{2})^{\otimes n}$ can be decomposed as given previously, but with the standard representations relabelled in terms of the number of strings. There is a natural inclusion from $n$ to $n+2$ under this labelling, so we end up with two decompositions in the limit $n\rightarrow\infty$, one odd, one even. We define the spin chain generalization $\mathcal{S}(w_{k})$, $k=0,1$, by replacing the standard representations labelled by a certain string number in the decomposition with the representation $\mathcal{X}(w_{k}^{2i})$ that has the same string number. For generic $q$ we can give this generalization explicitly:
\begin{align*}
\mathcal{S}(w_{k}):=&\{\nu_{j}\otimes x : x\in\mathcal{X}(w_{k}^{(2i)}), -k-2i\leq j\leq k+2i, j=k \mod 2\}\\
t(\nu_{j}\otimes x):=&\nu_{j}\otimes tx, \:\:\: t\in TL_{\infty}
\end{align*}
The $U_{q}(\mathfrak{sl}_{2})$ action on this is then given as follows:
\begin{align*}
K(\nu_{i}\otimes x):=&q^{-i}\nu_{i}\otimes x,& E(\nu_{i}\otimes x):=&[\frac{s(x)-i}{2}+1]\nu_{i-2}\otimes x,& E(\nu_{-s(x)}\otimes x):=&0\\ & & F(\nu_{i}\otimes x):=&[\frac{s(x)+i}{2}+1]\nu_{i+2}\otimes x,& F(\nu_{s(x)}\otimes x):=&0
\end{align*}
The case when $q$ is a root of unity is more complicated. In this case, $\mathcal{S}(w_{k})$ has the same basis, but now both the $TL_{\infty}$ and $U_{q}(\mathfrak{sl}_{2})$ actions are more difficult to give a general formula for. To describe the actions, we can use the decomposition of the $TL_{n}$ spin chain at roots of unity given in \cite{GV}, along with the basis of $U_{q}(\mathfrak{sl}_{2})$ projective representations given in \cite{BGT}. As an example, for $l=2$, the even spin chain can be decomposed as follows:
\begin{equation*}
\begin{tikzcd}
& & \vdots & & \\
\nu_{-4}\otimes\mathcal{V}_{4} \arrow[rd, "\tilde{F}"] & \nu_{-2}\otimes\mathcal{V}_{4} \arrow[l, "E"'] \arrow[r, "F"] \arrow[d, dashed, "\tilde{e}_{i}"] \arrow[rd, "\tilde{F}", near end] & \nu_{0}\otimes\mathcal{V}_{4}  \arrow[ld, "\tilde{E}"', near end] \arrow[rd, "\tilde{F}", near end] & \nu_{2}\otimes\mathcal{V}_{4} \arrow[l, "E"'] \arrow[r, "F"] \arrow[d, dashed, "\tilde{e}_{i}"] \arrow[ld, "\tilde{E}"', near end] & \nu_{4}\otimes\mathcal{V}_{4}  \arrow[ld, "\tilde{E}"'] \\
& \nu_{-2}\otimes\mathcal{V}_{2} \arrow[rd, "\tilde{F}"] & \nu_{0}\otimes\mathcal{V}_{2} \arrow[l, "E"'] \arrow[r, "F"] \arrow[d, dashed, "\tilde{e}_{i}"] & \nu_{2}\otimes\mathcal{V}_{2}  \arrow[ld, "\tilde{E}"'] & \\
& & \nu_{0}\otimes\mathcal{V}_{0} & &
\end{tikzcd}
\end{equation*}
where we have relabelled $\mathcal{V}_{n-2p}:=\mathcal{V}_{n,p}$ the standard representations in terms of the number of strings. The map $\tilde{e}_{i}$ denotes the extension defined in Definition \ref{Extensions}. The maps $\tilde{E}$, $\tilde{F}$ are defined by $\tilde{E}(\nu_{i}\otimes x):=\nu_{i-2}\otimes\phi(x)$, $\tilde{F}(\nu_{i}\otimes x):=\nu_{i+2}\otimes\phi(x)$. We have also omitted labelling the action of the divided powers $E^{(2)}:=\frac{E}{[2]!}$, $F^{(2)}:=\frac{F}{[2]!}$ which act as $E^{(2)},F^{(2)}:\nu_{i}\otimes x\mapsto \nu_{i\pm 2}\otimes x$ for all $\nu_{i}\otimes x$. Given this decomposition, the spin chain generalization $\mathcal{S}(w_{0})$ for $l=2$ is given as follows:
\begin{equation*}
\begin{tikzcd}
& & \vdots & & \\
\nu_{-4}\otimes\mathcal{X}(w_{0}^{(4)}) \arrow[rd, "\tilde{F}"] & \nu_{-2}\otimes\mathcal{X}(w_{0}^{(4)}) \arrow[l, "E"'] \arrow[r, "F"] \arrow[d, dashed, "\tilde{e}_{i}"] \arrow[rd, "\tilde{F}", near start] & \nu_{0}\otimes\mathcal{X}(w_{0}^{(4)})  \arrow[ld, "\tilde{E}"', near start] \arrow[rd, "\tilde{F}", near start] & \nu_{2}\otimes\mathcal{X}(w_{0}^{(4)}) \arrow[l, "E"'] \arrow[r, "F"] \arrow[d, dashed, "\tilde{e}_{i}"] \arrow[ld, "\tilde{E}"', near start] & \nu_{4}\otimes\mathcal{X}(w_{0}^{(4)})  \arrow[ld, "\tilde{E}"'] \\
& \nu_{-2}\otimes\mathcal{X}(w_{0}^{(2)}) \arrow[rd, "\tilde{F}"] & \nu_{0}\otimes\mathcal{X}(w_{0}^{(2)}) \arrow[l, "E"'] \arrow[r, "F"] \arrow[d, dashed, "\tilde{e}_{i}"] & \nu_{2}\otimes\mathcal{X}(w_{0}^{(2)})  \arrow[ld, "\tilde{E}"'] & \\
& & \nu_{0}\otimes\mathcal{X}(w_{0}) & &
\end{tikzcd}
\end{equation*}
By its construction, the action of $TL_{\infty}$ and $U_{q}(\mathfrak{sl}_{2})$ commute on this representation. Given this spin chain generalization, and a choice of link states $w_{0},w_{1}$, we can define the functor $\mathcal{F}_{w_{0},w_{1}}:Rep U_{q}(\mathfrak{sl}_{2})\rightarrow Rep TL_{\infty}$ by $\mathcal{F}(-):=Hom(-,\mathcal{S}(w_{0})\oplus\mathcal{S}(w_{1}))$. Let $\mathcal{C}(w_{0},w_{1})$ be the Serre subcategory generated by finitely generated $TL_{\infty}(q+q^{-1})$ representations generated by link states that differ from $w_{0}$ or $w_{1}$ at only finitely many points. We make the following conjecture:
\begin{conj}
The functor $\mathcal{F}_{w_{0},w_{1}}$ defines an equivalence of abelian categories between the category of finite dimensional $U_{q}(\mathfrak{sl}_{2})$ representations and $\mathcal{C}(w_{0},w_{1})$.
\end{conj}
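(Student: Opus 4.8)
The plan is to prove the conjecture first for $q$ generic, where both categories are semisimple, and then to reduce the root-of-unity case to a block-by-block comparison of indecomposables using the decomposition of the finite spin chain in \cite{GV} and the description of the $U_{q}(\mathfrak{sl}_{2})$ projectives in \cite{BGT}. A preliminary point: since $Hom(-,M)$ is contravariant, one should read $\mathcal{F}_{w_{0},w_{1}}$ as precomposed with the rigid duality $V\mapsto V^{*}$ on $Rep\,U_{q}(\mathfrak{sl}_{2})$, so that $\mathcal{F}_{w_{0},w_{1}}(V)=Hom_{U_{q}(\mathfrak{sl}_{2})}(V^{*},\mathcal{S}(w_{0})\oplus\mathcal{S}(w_{1}))$ is covariant. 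First I would check $\mathcal{F}_{w_{0},w_{1}}$ is well defined: for finite dimensional $V$ only finitely many of the summands $\nu_{j}\otimes\mathcal{X}(w_{k}^{(2i)})$ of $\mathcal{S}(w_{0})\oplus\mathcal{S}(w_{1})$ contribute, so the image is a finitely generated $TL_{\infty}$-module built from link-state modules differing from $w_{0}$ or $w_{1}$ at finitely many points, hence lies in $\mathcal{C}(w_{0},w_{1})$; exactness should follow once one restricts to finite dimensional $U_{q}(\mathfrak{sl}_{2})$-modules, $\mathcal{S}(w_{0})\oplus\mathcal{S}(w_{1})$ being, in the appropriate completed sense, an injective cogenerator of the category of modules appearing in its socle filtration.

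\textbf{Generic $q$.} Here the category of finite dimensional $U_{q}(\mathfrak{sl}_{2})$-representations is semisimple with simples $X_{m}$, and by Theorem \ref{Irreducibility of representations} each $\mathcal{X}(w_{k}^{(2i)})$ is irreducible, so by Corollary \ref{iso} and Proposition \ref{maps 2} the category $\mathcal{C}(w_{0},w_{1})$ is semisimple with simple objects exactly the $\mathcal{X}(w_{k}^{(2i)})$ (one string number per class; the $w_{0}$ family gives even string numbers, the $w_{1}$ family odd). Using the explicit description $\mathcal{S}(w_{k})=\bigoplus_{i\geq 0}X_{k+2i}\otimes\mathcal{X}(w_{k}^{(2i)})$, one computes $\mathcal{F}(X_{m})\simeq\mathcal{X}(w_{m\bmod 2}^{(2\lfloor m/2\rfloor)})$, so $\mathcal{F}$ is a bijection on isomorphism classes of simples and essential surjectivity is immediate. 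Since $\mathcal{F}$ is additive and nonzero on each $X_{m}$, the induced algebra map $\mathbb{C}=End(X_{m})\to End(\mathcal{F}(X_{m}))=\mathbb{C}$ is an isomorphism, so $\mathcal{F}$ is fully faithful and hence an equivalence.

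\textbf{Root of unity.} Now $\mathcal{F}$ must match block structures. On the $U_{q}(\mathfrak{sl}_{2})$ side the blocks are known from \cite{BGT}: a nonsemisimple block has simples $X_{m},X_{m'}$ reflected across a critical wall, with projective-injective tilting modules fitting in $0\to X_{m'}\to T_{m}\to X_{m}\to 0$ and longer analogues. On the $TL_{\infty}$ side the corresponding data is Proposition \ref{exact} (realizing $\mathcal{R}(w),\mathcal{L}(w)$ as kernels and cokernels of the $\phi$-maps) together with the extensions $\mathcal{X}(w')\hookrightarrow\mathcal{P}(w)\twoheadrightarrow\mathcal{X}(w)$ and $\mathcal{X}(w'')\hookrightarrow\mathcal{Q}(w)\twoheadrightarrow\mathcal{X}(w)$ of Definition \ref{Extensions} and its corollary, plus the $Ext^{1}$ data. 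The steps are: (i) show $\mathcal{F}$ sends $X_{m}$ to $\mathcal{X}(w_{m\bmod 2}^{(2\lfloor m/2\rfloor)})$ and the tilting module $T_{m}$ to the projective indecomposable $\mathcal{P}(w)$, reading the matching from the diagrams of $\tilde{E},\tilde{F},\tilde{e}_{i}$ maps displayed for $l=2$ and their root-of-unity analogues assembled from \cite{GV} and \cite{BGT}; (ii) verify that $\mathcal{F}$ carries each defining sequence of a $T_{m}$ to the sequence $\mathcal{R}(w)\hookrightarrow\mathcal{X}(w)\twoheadrightarrow\mathcal{L}(w)$ or $\mathcal{X}(w')\hookrightarrow\mathcal{P}(w)\twoheadrightarrow\mathcal{X}(w)$; (iii) deduce full faithfulness from a finite computation of $Hom$ and $Ext^{1}$ in each block, comparing the $U_{q}(\mathfrak{sl}_{2})$ numbers against those from Propositions \ref{maps 2}, \ref{exact} and \ref{Ext of TL_n}; (iv) get essential surjectivity by showing, via the restriction-to-$TL_{n}$ technique used throughout Section \ref{TL infty}, that every indecomposable of $\mathcal{C}(w_{0},w_{1})$ is one of $\mathcal{X}(w_{k}^{(2i)})$, $\mathcal{R}(w)$, $\mathcal{P}(w)$, $\mathcal{Q}(w)$, all of which are in the image of $\mathcal{F}$.

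\textbf{Main obstacle.} The crux is the non-semisimple root-of-unity case, for two related reasons. First, as noted after the definition of the spin-chain bilinear form, Lemma $3.2$ of \cite{RSA} fails to generalize to the spin chain, so one cannot simply transport a cellular structure; instead one must establish the double-centralizer property for $\mathcal{S}(w_{0})\oplus\mathcal{S}(w_{1})$ directly, showing that its $TL_{\infty}$-endomorphism algebra is exactly a suitable completion of $U_{q}(\mathfrak{sl}_{2})$ acting through the divided powers $E^{(r)},F^{(r)}$. Second, one must rule out indecomposables in $\mathcal{C}(w_{0},w_{1})$ beyond those listed above; controlling $Ext$-groups of link-state modules with finitely many strings is precisely what the closing remark of Section \ref{TL infty} leaves open, and settling it is likely the hardest step. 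A cleaner but essentially equivalent route would be to exhibit an idempotented algebra $A$ with $\mathcal{C}(w_{0},w_{1})\simeq$ finite dimensional $A$-modules and $A\simeq$ a completion of the Lusztig form of $U_{q}(\mathfrak{sl}_{2})$, and then conclude by Morita theory.
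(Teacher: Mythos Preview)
The paper does not prove this statement: it is stated as a \emph{conjecture} (the final numbered statement before the Acknowledgements) and no proof is offered. There is therefore no ``paper's own proof'' against which to compare your proposal.

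What you have written is a reasonable outline of a strategy, and you are candid that it is not a complete argument---your ``Main obstacle'' paragraph correctly identifies the two serious gaps (establishing the double-centralizer property for $\mathcal{S}(w_{0})\oplus\mathcal{S}(w_{1})$, and classifying all indecomposables of $\mathcal{C}(w_{0},w_{1})$). One small misreading: the closing remark of Section~\ref{TL infty} concerns link states with \emph{infinitely} many strings, not finitely many; for the finitely-many-strings case the paper does prove that the only nontrivial extensions of $\mathcal{X}(w)$ by $\mathcal{X}(z)$ are $\mathcal{P}(w)$ and $\mathcal{Q}(w)$. That said, this proposition only treats extensions between the $\mathcal{X}(w)$'s themselves, so a full classification of indecomposables in $\mathcal{C}(w_{0},w_{1})$ (including iterated extensions and extensions involving $\mathcal{L}(w)$, $\mathcal{R}(w)$) is indeed not available in the paper, and your identification of this as the hardest step stands.

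In short: your proposal is not wrong, but it is a sketch toward an open problem, and the paper provides nothing to benchmark it against.
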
	
\subsection*{Acknowledgements}
This work was supported by ISF grants 2095/15 and 711/18 and the Center for Advanced Studies in Mathematics in Ben Gurion University. The author thanks Inna Entova for her comments and for suggesting the project.

\bibliographystyle{abbrv}
\bibliography{ref}

\end{document}